\numberwithin{equation}{section}
\title{Expected Signature Kernels for L\'evy Rough Paths}
\author[Peter K.~Friz]{Peter K.~Friz$^{\dagger,\ddagger}$}
\author[Paul P.~Hager]{Paul P.~Hager$^{\mathsection}$}
\email{friz@math.tu-berlin.de}
\email{paul.peter.hager@univie.ac.at}
\address{$^{\dagger}$Institut f\"ur Mathematik, TU Berlin, Stra\ss e des 17.\ Juni 136, 10586 Berlin, Germany}
\address{$^{\ddagger}$Weierstrass Institute, Mohrenstr.\ 39, 10117 Berlin, Germany}
\address{$^{\mathsection}$Department of Statistics and Operations Research, University of Vienna, Kolingasse 14--16, 1090 Wien, Austria}
\subjclass[2020]{60L10, 60L90, 60E10, 60G44, 60G48, 60G51, 60J76}
\keywords{Signatures, L\'evy processes, rough paths, kernel mean embedding, maximum mean discrepancy}
\date{\today}
\newcommand{\esig}{\pmb{\mu}}
\renewcommand{\d}{\mathrm{d}}
\newcommand{\Id}{\mathrm{Id}}
\newcommand{\dd}{\mathrm{d}}
\newcommand{\TT}{\mathcal{T}}
\newcommand{\R}{\mathbb{R}}
\newcommand{\N}{\mathbb{N}}
\newcommand{\E}{\mathbb{E}}
\newcommand{\bx}{\mathbf{x}}
\newcommand{\by}{\mathbf{y}}
\newcommand{\bz}{\mathbf{z}}
\renewcommand{\P}{\mathbb{P}}
\newcommand{\F}{\mathcal{F}}
\renewcommand{\d}{\mathrm{d}}
\DeclareMathOperator{\Sigop}{Sig}
\newcommand{\Sig}[1]{\Sigop(#1)}
\DeclareMathOperator{\dualLeftOp}{L}
\newcommand{\dualLeft}[2]{\dualLeftOp_{#1}#2}
\DeclareMathOperator{\dualRightOp}{R}
\newcommand{\dualRight}[2]{\dualRightOp_{#1}#2}
\DeclareMathOperator{\dualLeftOpZ}{L^\circ}
\newcommand{\dualLeftZ}[2]{\dualLeftOp^\circ_{#1}#2}
\DeclareMathOperator{\dualRightOpZ}{R^\circ}
\newcommand{\dualRightZ}[2]{\dualRightOp^\circ_{#1}#2}
\DeclareMathOperator{\dMMD}{\mathrm{MMD}_{\mathrm{Sig}}}
\newcommand{\lvyalg}{\gamma}
\newcommand{\indic}[1]{\mathds{1}_{#1}} %
\newcommand{\ceil}[1]{\left\lceil#1\right\rceil} %
\newcommand{\Norm}[1]{|#1|}
\newcounter{cprop}[section]
\newtheorem{theorem}[cprop]{Theorem}
\newtheorem*{theorem*}{Theorem}
\theoremstyle{plain}
\newtheorem{corollary}[cprop]{Corollary}
\newtheorem*{corollary*}{Corollary}
\newtheorem{lemma}[cprop]{Lemma}
\newtheorem{proposition}[cprop]{Proposition}
\numberwithin{equation}{section}
\theoremstyle{definition}
\newtheorem{definition}[cprop]{Definition}
\newtheorem{notation}[cprop]{Notation}
\newtheorem{example}[cprop]{Example}
\theoremstyle{remark}
\newtheorem{remark}[cprop]{Remark}
\begin{document}
\maketitle

\begin{abstract}
The expected signature kernel arises in statistical learning tasks as a similarity measure of probability measures on path space. Computing this kernel for known classes of stochastic processes is an important problem that, in particular, can help reduce computational costs. Building on the representation of the expected signature of (inhomogeneous) L\'evy processes with absolutely continuous characteristics as the development of an  absolutely continuous path in the extended tensor algebra [F.-H.-Tapia, Forum of Mathematics: Sigma (2022), "Unified signature cumulants and generalized Magnus expansions"], we extend the arguments developed for smooth rough paths in [Lemercier-Lyons-Salvi, "Log-PDE Methods for Rough Signature Kernels"] to derive a PDE system for the expected signature of inhomogeneous L\'evy processes. As a specific example, we see that the expected signature kernel of Gaussian martingales satisfies a Goursat PDE.
\end{abstract}

\tableofcontents
\clearpage
\section{Introduction}

A basic task in probability, statistics, and machine learning is to quantify distances between probability measures. A flexible and widely used approach is provided by \emph{integral probability metrics} (IPMs). Given probability measures $P$ and $Q$ on a measurable space $\mathcal{X}$ and a class $F$ of measurable test functions, the IPM generated by $F$ is
$$
\operatorname{IPM}_F(P,Q) \coloneqq \sup_{f\in F} \bigl| \mathbb{E}_{X\sim P}[f(X)] - \mathbb{E}_{Y\sim Q}[f(Y)] \bigr|,
$$
where $X\sim P$ and $Y\sim Q$ are random variables taking values in $\mathcal{X}$.
For a detailed introduction to IPMs and to kernel mean embeddings (the latter discussed below), see \cite{muandet2017kernel}.

Typical choices of \(F\) recover familiar distances, in particular the Kolmogorov--Smirnov distance, the 1-Wasserstein distance, and the total variation distance. %
In this paper we are interested in the \emph{maximum mean discrepancy} (MMD): %
Given a positive definite kernel $k$ on $\mathcal{X}$ with associated reproducing kernel Hilbert space (RKHS) $\mathcal{H}_k$, the MMD between $P$ and $Q$ is
\begin{equation}\label{eq:mmd_general_def}
    \mathrm{MMD}_k(P,Q) \coloneqq \sup_{\substack{f \in \mathcal{H}_k\\ \|f\|_{\mathcal{H}_k}\le 1}}
\Bigl( \mathbb{E}_{X\sim P}[f(X)] - \mathbb{E}_{Y\sim Q}[f(Y)] \Bigr).
\end{equation}
Recall that a reproducing \emph{kernel Hilbert space} (RKHS) formalizes the idea that evaluation of a function can be represented by an inner product. Concretely, let $\mathcal{X}$ be a non-empty set and let $\mathcal{H}$ be a Hilbert space of functions $f:\mathcal{X}\to\mathbb{R}$. We say that $\mathcal{H}$ is an RKHS if there exists a function (the \emph{reproducing kernel}) $k:\mathcal{X}\times\mathcal{X}\to\mathbb{R}$ such that, for every $x\in\mathcal{X}$, the representer $k(x,\cdot)$ belongs to $\mathcal{H}$, and the \emph{reproducing property} holds:
$$
f(x)\;=\;\langle f,\,k(x,\cdot)\rangle_{\mathcal{H}}\qquad\text{for all }f\in\mathcal{H},\ x\in\mathcal{X}.
$$
The function $k$ encapsulates the geometry of $\mathcal{H}$ and allows one to evaluate functions via inner products, a feature that underlies many kernel methods. %
The foundational Moore--Aronszajn theorem %
states that any symmetric, positive definite kernel $k$ determines a unique RKHS $\mathcal{H}_k$ for which $k$ is the reproducing kernel. In practice, $\mathcal{H}_k$ is typically infinite-dimensional and may be difficult to describe explicitly; nevertheless, the RKHS framework proves extraordinarily useful across probability, statistics, and machine learning.
A positive definite kernel $k$ induces a \emph{feature map} $\phi:\mathcal{X}\to\mathcal{H}_k$ by
$$
\phi(x) \coloneqq k(x,\cdot),
$$
so that inner products in the RKHS satisfy
$$
\langle \phi(x), \phi(x') \rangle_{\mathcal{H}_k} \;=\; k(x,x').
$$
Given a probability measure $P$ on $\mathcal{X}$, the \emph{kernel mean embedding} of $P$ is the element $\mu_P \in \mathcal{H}_k$ defined by
$$
\mu_P \coloneqq \mathbb{E}_{X\sim P}\big[k(X,\cdot)\big] \;=\; \mathbb{E}_{X\sim P}\big[\phi(X)\big],
$$
provided the (Bochner) expectation exists. By the reproducing property, this embedding linearizes expectations of RKHS functions:
$$
\mathbb{E}_{X\sim P}[f(X)] \;=\; \langle f, \mu_P \rangle_{\mathcal{H}_k}\qquad\text{for all } f\in\mathcal{H}_k.
$$
It follows immediately from the definition of the kernel mean embedding that\footnote{Injectivity of the mean embedding $P\mapsto \mu_P$ implies that $\mathrm{MMD}_k$ defines a proper metric on probability measures.}
$$
\mathrm{MMD}_k(P,Q)\;=\;\bigl\|\mu_P-\mu_Q\bigr\|_{\mathcal{H}_k}.
$$
Moreover, writing $X,X'\stackrel{\text{i.i.d.}}{\sim} P$ and $Y,Y'\stackrel{\text{i.i.d.}}{\sim} Q$ (all independent), one has the convenient second-moment identity
$$
\mathrm{MMD}_k^2(P,Q)
= \mathbb{E}\big[k(X,X')\big]
 + \mathbb{E}\big[k(Y,Y')\big]
 - 2\,\mathbb{E}\big[k(X,Y)\big].
$$
In the special case where $\mathcal{X}$ is a Hilbert space and $k:\mathcal{X}\times\mathcal{X}\to\mathbb{R}$ is bilinear, one obtains
  $$
  \mathrm{MMD}_k^2(P,Q)
  = k(\mathbb{E}X,\mathbb{E}X)
    + k(\mathbb{E}Y,\mathbb{E}Y)
    - 2\,k(\mathbb{E}X,\mathbb{E}Y).
  $$
This fits beautifully in the setting of random signatures \cite{LyonsICM,chevyrev2018signaturemoments}. %
Recall that the signature ${\mathrm{Sig}:\Omega_T \to \TT^2}$ constitutes a (universal) feature map on path space $\Omega_T$ into the Hilbert space $\mathcal{X} = (\TT^2, \langle\cdot,\cdot\rangle)$ of square-summable tensor series (definitions below) 
with numerous applications in applied data science, cf. \cite{cass2024lecturenotesroughpaths,bayer2025signature} and references therein.
{\em Signature kernels} of (deterministic) time series \cite{kiraly2019kernels},
 $$u(s,t):=\langle \Sig{\gamma}_{0,s},\Sig{\widetilde{\gamma}}_{0,t}\rangle$$ 
were seen to satisfy simple (Goursat) partial differential equations \cite{salvi2021signature}; a remarkable generalization to high-frequency data is the topic of \cite{lemercier2024high}.

The present work generalizes \cite{lemercier2024high} to classes of stochastic processes, exhibiting differential systems satisfied by the expected signature kernels, i.e., we provide formulae to calculate
\begin{align}\label{eq:expected_signature_kernel}
     \E_{X\sim P,\,Y\sim Q}\big[\langle \Sig{X},\Sig{Y}\rangle \big] = \langle \E_{X\sim P}[\Sig{X}],\E_{Y\sim Q}[\Sig{Y}]\big\rangle,
\end{align}
which feeds directly into the MMD representations given above. To recapitulate, signature-MMDs, first studied in \cite{chevyrev2018signaturemoments, chevyrev2022signature}, have since been successfully applied in several studies, including \cite{chevyrev2022signature,kidger2020deep, liao2024sigwasserstein, andres2024signature, dyer2024approximate, horvath2025signature}.
Crucially, an analytic computation of \eqref{eq:expected_signature_kernel} 
circumvents the sample-size limitations of the default Monte Carlo approximation,
\begin{align*}
     \E_{X\sim P,\,Y\sim Q}\!\big[\langle \Sig{X}, \Sig{Y}\rangle\big] 
     \approx \frac{1}{NM}\sum_{i=1}^N\sum_{j=1}^M \langle \Sig{x_i}, \Sig{y_j}\rangle,\quad x_i\stackrel{\text{i.i.d.}}{\sim} P, \; y_j\stackrel{\text{i.i.d.}}{\sim} Q.
\end{align*}

In short, Theorem~\ref{thm:main} provides a system of differential equations for the computation of \eqref{eq:expected_signature_kernel} in the case of inhomogeneous L\'evy processes\footnote{More specifically, It\^o semimartingales with independent increments, a.k.a.\ semimartingales with independent increments and absolutely continuous characteristics.}. More specifically, this system consists of one (Goursat) partial differential equation that is linearly coupled with two higher-dimensional ordinary differential equations. In the presence of jumps, this system is inherently infinite-dimensional, yet well-posed. 

In the continuous case, the system reduces to a finite-dimensional one, which directly allows for tractable computation of expected signature kernels and signature-MMDs, as exemplified in Theorem~\ref{thm:dMMD_formula}. Numerical tractability is retained even in the presence of a nontrivial L\'evy measure through the truncated system proposed in Theorem~\ref{thm:truncated} accompanied by error bounds.

Presenting all results for the general case of L\'evy processes taking values in nilpotent free Lie groups—hence the title ``L\'evy rough paths''—further enables the use of area-augmented data, as in \cite{lemercier2024high}, of interest for highly oscillatory data.

\textbf{Acknowledgment.}
PKF acknowledges seed support from DFG CRC/TRR 388 “Rough Analysis, Stochastic Dynamics and Related Fields”, projects A02, A05. PKF is also supported by the DFG Excellence Cluster MATH+ and a MATH+ Distinguished Fellowship.

\section{Preliminaries}

\subsection{Tensor algebra}
We fix a vector space $V \cong \R^{d}$ with basis $\{e_1, \dots, e_d\}$.
Denote by $\mathcal{W}$ the set of words over the alphabet $\{1, \dots, d\}$.
We define the \textit{length} of a word $w = i_1 \dots i_n \in \mathcal{W}$ by $|w|:=n$. The empty word is denoted by $\varnothing$ with $|\varnothing| = 0$.
To each word $w= i_1 \dots i_n\in\mathcal{W}$ we associate a basis element of the $n$-fold tensor product $V^{\otimes n}$ by $e_w := e_{i_1} \otimes \dots \otimes e_{i_n}$.
We denote by $T(V) = \bigoplus_{k=0}^\infty V^{\otimes k}$ the tensor algebra over $V$.
The extended tensor algebra $\TT := T((V)) := \prod_{k = 0}^\infty V^{\otimes k}$ is represented by formal tensor series $\bx = \sum_{w\in \mathcal{W}} \bx^w e_w$, with $\bx^w \in \R$.
We understand $T(V)$ as the subspace of $\TT$ consisting of series with only finitely many non-zero terms.
The product on $\TT$ is fully described by concatenation of words, i.e., for $w, v \in \mathcal{W}$ we have $e_w\otimes e_v = e_{wv}$, which extends to
$$\bx \otimes \by = \Big(\sum_{w\in\mathcal{W}} \bx^w e_w\Big) \otimes \Big(\sum_{w\in\mathcal{W}} \by^w e_w\Big) = \sum_{w\in\mathcal{W}} e_w\sum_{w_1w_2 = w} \bx^{w_1} \by^{w_2},$$
where the inner sum is finite and ranges over all words $w_1, w_2 \in \mathcal{W}$ concatenating to $w$.
For any level $n\in\mathbb{N}$ we define the projection map
$\pi_n: \TT \to V^{\otimes n}$  by  $\pi_n(\bx) := \sum_{|w|=n} \bx^w e_w \in V^{\otimes n}$. 
Similarly, for a level $N\in\mathbb{N}$ we define the truncation map by $\pi_{(0,N)}(\bx) := \sum_{|w|\le N} \bx^w e_w$ mapping from $\TT$ onto the \textit{truncated tensor algebra}
\begin{align*}
    T^N(V) = \mathrm{span}\{ e_w \;\vert\; w \in \mathcal{W}, \; |w| \le N\} ~\subset \TT,
\end{align*}
which forms an algebra under the truncated multiplication
$e_w \otimes_N e_v := 1_{|wv| \le N}e_w \otimes e_v$.
For $\bx \in \TT$ we write $\bx^{(n)} := \pi_n(\bx)$ for all $n\in\N$ and $\bx^{(0, N)} := \pi_{(0,N)}(\bx) \in T^N(V)$ for all $N\in\N$.
For ease of notation we will sometimes also write $\bx =: \pi_{(0,\infty)}(\bx) =: \bx^{(0, \infty)}$.

We define the subalgebra of tensor series starting with a zero scalar by $$\TT_0 = \{\bx \in \TT \;\vert\; \pi_0(\bx) = 0\},$$ which forms a Lie algebra with the commutator bracket $[\bx, \by]_{\otimes} := \bx\otimes\by - \by\otimes\bx$.
The tensor exponential
 \begin{align*}
     \exp_\otimes: \TT_0 \to \TT_1, \quad \bx \mapsto {1} +\sum_{n=1}^\infty \frac{1}{n!}\bx^{\otimes n},
 \end{align*}
is a bijection onto the group $(\TT_1, \otimes)$ of tensor series starting with a unit scalar $$\TT_1 = \{\bx \in \TT \;\vert\; \pi_0(\bx) = 1\}.$$
Indeed, the inverse operation is defined by the tensor logarithm
  \begin{align*}
     \log_\otimes: \TT_1 \to \TT_0, \quad ({1}+\bx) \mapsto \sum_{n=1}^\infty \frac{(-1)^{n+1}}{n}\bx^{\otimes n}.
 \end{align*}
 and for $\bx \in \TT_1$ the group inverse is $\bx^{-1} = \exp_\otimes(-\log_\otimes(\bx)) = \sum_{k = 0}^\infty ({1}-\bx)^{\otimes k}$.

We equip $V^{\otimes n} \cong \R^{d^n}$ with the Euclidean topology, writing $\vert \bx^{(n)} \vert$ for the norm of $\bx^{(n)}\in V^{\otimes n}$.
This norm is compatible with the tensor multiplication, i.e., we have $\vert \bx^{(n)} \otimes \bx^{(k)}\vert \le \vert \bx^{(n)}\vert \vert \bx^{(k)}\vert$ for all $\bx^{(k)}\in V^{\otimes k}$ and $\bx^{(n)}\in V^{\otimes n}$. Further, we will equip $T^N(V)$ with the topology induced by the norm $\Norm{x} := \max_{k=0, \dots, N} |x^{(k)}|$ for $x\in T^N(V)$.

\subsection{Tensor series of $p$-summability, inner-, and adjoint products}\label{sec:Tpspaces}
Recall that $\vert \cdot \vert$ denotes the Euclidean norm on $V^{\otimes n}\cong\R^{d^n}$ for each $n\in\N$.
For any $p\in(0,\infty)$ we define
\begin{align}\label{eq:def_T1}
\TT^p = \left\{\bx \in \TT \;\bigg\vert\; \Vert\bx \Vert_p := \bigg(\sum_{n=0}^\infty \vert \bx^{(n)} \vert^{p}\bigg)^{1/p} < \infty \right\},
\end{align}
which is a linear subspace of $\TT$ and $\Vert \cdot \Vert_p$ defines a norm on it.
While in the central part of this work we mainly use $p = 1$, let us review a few properties that $\TT^p$-spaces directly inherit from $\ell_p$-spaces.
For $1 \le p \le q$ it holds $\Vert \bx \Vert_p \ge \Vert \bx \Vert_q$ and thus $\TT^{p} \subseteq \TT^{q}$.
Furthermore, $\TT^p$ is a Banach space with Schauder basis $\{e_w\}_{w\in\mathcal{W}}$ (ordered lexicographically).
For $p,q\ge1$ with $\frac{1}{p}+ \frac{1}{q} = 1$ the following version of the H\"older inequality holds
\begin{align*}
    \sum_{k=0}^\infty |\langle \bx^{(k)}, \by^{(k)} \rangle| \le \Vert \bx \Vert_p \Vert \by\Vert_q, \qquad \bx, \by\in \TT,
\end{align*}
for all $\bx,\by\in \TT$ and we have a dual pairing $(\TT^p, \TT^q, \langle\cdot,\cdot\rangle)$ given by
\begin{align*}
    \langle \bx, \by \rangle := \sum_{k=0}^\infty \langle \bx^{(k)}, \by^{(k)} \rangle = \sum_{w\in\mathcal{W}} \bx^w\by^w, \qquad \bx, \by\in \TT.
\end{align*}
Consequently, we also see that the space $(\TT^2, \langle \cdot, \cdot\rangle)$ is Hilbert.

What can we say about the tensor product of $\TT^p$-series? From the compatibility of the tensor norm we have for all $\bx, \by\in \TT$:
\begin{align}\label{eq:lp_convolution}
        \Vert \bx \otimes \by \Vert_p^p
        &= \sum_{n=0}^\infty \left\vert \sum_{k=0}^n \bx^{(k)}\otimes\by^{(n-k)}\right\vert^p \le \sum_{n=0}^\infty \left( \sum_{k=0}^n \vert\bx^{(k)}\vert \vert\by^{(n-k)}\vert\right)^p.
\end{align}
Thus, some of the properties of convolution products of $\ell_p$-sequences extend to the tensor product of $\TT^p$-spaces. 
In particular \emph{Young's convolution inequality} (see e.g. \cite[Theorem~4.5.1-2]{hoernmander2003analysis} or the $\ell^p$-formulation in \cite[Theorem 12.1.2]{cheng2020function}) directly extends to the following
\begin{proposition}[]\label{prop:young} Let $p,q,r \ge 1$ such that $\frac1p + \frac1q = 1 + \frac1r$ then it holds
\begin{align}
    \Vert \bx \otimes \by \Vert_r \le \Vert \bx \Vert_p \Vert \by \Vert_q, \qquad \bx, \by \in \TT.
\end{align}
\end{proposition}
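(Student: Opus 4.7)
The plan is to reduce the claim to the classical Young convolution inequality for scalar $\ell^p$-sequences, using the sub-multiplicativity of the tensor norm on $V^{\otimes n}$ established earlier. The bridge is exactly the estimate \eqref{eq:lp_convolution} (and its $r$-analogue), which already compares the graded norms of $\bx \otimes \by$ with a scalar convolution.

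First I would define the non-negative scalar sequences $a_n := \abs{\bx^{(n)}}$ and $b_n := \abs{\by^{(n)}}$ for $n \in \N$. By the triangle inequality on $V^{\otimes n}$ together with the compatibility of the tensor norm, for every $n\ge 0$,
\begin{align*}
\bigabs{(\bx \otimes \by)^{(n)}} = \Bigabs{\sum_{k=0}^n \bx^{(k)} \otimes \by^{(n-k)}} \le \sum_{k=0}^n \abs{\bx^{(k)}}\,\abs{\by^{(n-k)}} = (a * b)_n,
\end{align*}
where $a * b$ denotes the discrete convolution of $a$ and $b$. Raising to the $r$-th power and summing in $n$ yields $\Vert \bx \otimes \by \Vert_r^r \le \Vert a * b \Vert_{\ell^r}^r$, which is precisely the $r$-th power version of the estimate already recorded in \eqref{eq:lp_convolution}.

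Next I would invoke the classical Young convolution inequality on $\ell^p(\N)$: under the assumption $\frac1p+\frac1q=1+\frac1r$, one has $\Vert a * b\Vert_{\ell^r}\le \Vert a \Vert_{\ell^p}\Vert b\Vert_{\ell^q}$. By construction $\Vert a\Vert_{\ell^p}=\Vert \bx\Vert_p$ and $\Vert b\Vert_{\ell^q}=\Vert \by\Vert_q$, so combining the two displays gives $\Vert \bx\otimes\by\Vert_r \le \Vert \bx\Vert_p \Vert \by\Vert_q$, which is the claim. The statement is vacuous if either right-hand side norm is infinite, so there is nothing to check in that case.

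There is no real obstacle here: the whole point is that tensor sub-multiplicativity lets us dominate the graded norms of $\bx \otimes \by$ by the scalar convolution $a*b$, after which the inequality is just a packaging of the $\ell^p$ result cited from \cite{hoernmander2003analysis,cheng2020function}. The only mildly delicate point is ensuring that all rearrangements in the definition of $(\bx \otimes \by)^{(n)}$ are justified, but this is immediate since each level $n$ involves only the finite sum $\sum_{k=0}^n \bx^{(k)}\otimes \by^{(n-k)}$.
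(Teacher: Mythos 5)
Your proof is correct and takes essentially the same route as the paper: the paper's argument also consists of bounding $\abs{(\bx\otimes\by)^{(n)}}$ by the scalar convolution $(a*b)_n$ via sub-multiplicativity of the tensor norm (display \eqref{eq:lp_convolution}) and then invoking the classical $\ell^p$ version of Young's inequality. You have simply spelled out the details that the paper leaves implicit.
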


When $\frac{1}{p}+\frac{1}{q} < 1 + \frac{1}{r}$ we can find tensor series $\bx \in \TT^p$ and $\by\in\TT^q$ such that $\Vert \bx \otimes \by \Vert_r = \infty$.
Indeed, we construct an example for $p=q=r=2$, which can easily be generalized: Consider the series $\bx = \sum_{n\ge1}  n^{-\beta} e_1^{\otimes n}$ with  $\beta \in (\frac{1}{2}, \frac{3}{4}]$, for which we clearly have $\bx\in\TT^2$. Then $|\pi_n( \bx^{\otimes 2})| = \sum_{k=1}^n k^{-\beta}(n-k)^{-\beta} \ge n (\frac{n}{2})^{-2\beta} \ge (\frac{n}{2})^{-2\beta + 1}$. Thus $\Vert \bx^{\otimes 2} \Vert_{2} = \infty$.
This demonstrates that $\TT^2$ is not a subalgebra of $\TT$.
For $p=1$ we can instead directly deduce from Young's inequality the following
\begin{corollary}
    $(\TT^1, +, \otimes)$ is a Banach algebra.
\end{corollary}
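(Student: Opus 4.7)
The plan is to specialize Young's inequality (Proposition~\ref{prop:young}) to the endpoint case $p = q = r = 1$, since $\frac{1}{1} + \frac{1}{1} = 2 = 1 + \frac{1}{1}$ satisfies the hypothesis. This yields immediately the submultiplicative bound
$$
\Vert \bx \otimes \by \Vert_1 \le \Vert \bx \Vert_1 \Vert \by \Vert_1, \qquad \bx, \by \in \TT^1,
$$
which in particular shows that $\TT^1$ is closed under the tensor product $\otimes$ and that $\otimes \colon \TT^1 \times \TT^1 \to \TT^1$ is jointly continuous.

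To assemble this into a proof, I would proceed in three short steps. First, recall the already recorded fact (stated right after \eqref{eq:def_T1}) that $(\TT^p, \Vert \cdot \Vert_p)$ is a Banach space for every $p \ge 1$, specialized here to $p = 1$. Second, note that the tensor product on the ambient algebra $\TT$ is associative and bilinear, so these properties are inherited by the subspace $\TT^1$. Third, apply Proposition~\ref{prop:young} with $p = q = r = 1$ to obtain submultiplicativity, which simultaneously certifies that $\TT^1$ is a subalgebra and that multiplication is norm-continuous.

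There is essentially no obstacle: this corollary is pure bookkeeping on top of the preceding Young-type inequality. If desired, one can additionally remark that the multiplicative unit $\mathbf{1} = e_\varnothing$ lies in $\TT^1$ with $\Vert \mathbf{1} \Vert_1 = 1$, so in fact $(\TT^1, +, \otimes)$ is a \emph{unital} Banach algebra.
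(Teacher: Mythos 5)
Your proof is correct and is exactly the argument the paper intends: the corollary is stated as a direct consequence of Proposition~\ref{prop:young} with $p=q=r=1$, combined with the completeness of $(\TT^1,\Vert\cdot\Vert_1)$ already recorded in Section~\ref{sec:Tpspaces}. The extra remark about the unit $e_\varnothing$ is a harmless and correct addition.
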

Following \cite{lemercier2024high}, we will also need the operations that are adjoint to the tensor multiplication.
Young's inequality allows us to identify the appropriate domains and codomains. 
\begin{proposition}\label{prop:adjoint_mul}
    Let $p,q,r\ge1$ such that $\frac1p + \frac{1}{q} = 1 + \frac{1}{r}$.
    There exist continuous bilinear maps 
    \begin{align*}
        \dualLeft{}{}: \TT^{p}\times \TT^{q} \to \TT^{r},\quad (\bx, \bz)\mapsto \dualLeft{\bx}{\bz} \\
        \dualRight{}{}:  \TT^{p}\times \TT^{q} \to \TT^{r},\quad (\bx, \bz)\mapsto \dualRight{\bx}{\bz}
    \end{align*}
    such that 
    \begin{align}\label{eq:dual_multiplication}
    \langle \bz, \bx\otimes \by \rangle = \langle \dualLeft{\bx}{\bz}, \by \rangle = \langle \dualRight{\by}{\bz}, \bx \rangle
    \end{align}
    for all $\bx\in\TT^{p}, \bz\in\TT^{q}$ and $\by\in\TT^{r^\prime}$ with $r^\prime = 1 - \frac1r$.
\end{proposition}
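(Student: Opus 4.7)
The plan is to define both adjoint maps coordinate-wise, use a Young-type estimate to place them in $\TT^r$ with the sharp norm bound, and verify the duality identities via Fubini. Expanding the tensor product by concatenation gives $\langle \bz, \bx\otimes\by\rangle = \sum_{u,v\in\W} \bz^{uv}\bx^u\by^v$, which forces the coordinate-wise definitions
$$(\dualLeft{\bx}{\bz})^v := \sum_{u\in\W} \bx^u \bz^{uv}, \qquad (\dualRight{\by}{\bz})^u := \sum_{v\in\W} \by^v \bz^{uv}.$$
Grouping by word length and applying Cauchy--Schwarz on $V^{\otimes(n+k)}\cong V^{\otimes k}\otimes V^{\otimes n}$ yields the pointwise bound $|(\dualLeft{\bx}{\bz})^{(n)}| \le \sum_{k\ge 0} |\bx^{(k)}|\,|\bz^{(n+k)}|$, and symmetrically for $\dualRight{}{}$.

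The key estimate is then a Young-type inequality for a scalar cross-correlation. Set $a_k := |\bx^{(k)}| \in \ell^p$ and $b_k := |\bz^{(k)}| \in \ell^q$; the right-hand side above is dominated by $c_n := \sum_{k\ge 0} a_k b_{n+k}$. Extending $a,b$ by zero to sequences on $\Z$ and writing $\check b_k := b_{-k}$, one checks $c_n = (a\ast\check b)_{-n}$, so the classical discrete Young inequality on $\ell^p(\Z)$ delivers $\|c\|_{\ell^r}\le\|a\|_{\ell^p}\|b\|_{\ell^q}$ precisely when $\tfrac1p+\tfrac1q=1+\tfrac1r$. This produces both absolute convergence of the defining series and the target norm bound $\|\dualLeft{\bx}{\bz}\|_r \le \|\bx\|_p\|\bz\|_q$, from which continuous bilinearity of $\dualLeft{}{}$ follows; the case of $\dualRight{}{}$ is entirely symmetric.

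For the duality identity \eqref{eq:dual_multiplication}, I would expand both $\langle \bz, \bx\otimes\by\rangle$ and $\langle \dualLeft{\bx}{\bz}, \by\rangle$ as double series in $(u,v)\in\W\times\W$ and compare by Fubini. For $\by\in\TT^{r'}$ with $\tfrac1r+\tfrac1{r'}=1$, H\"older at the level of sequences pairs $\dualLeft{\bx}{\bz}\in\TT^r$ with $\by\in\TT^{r'}$ and supplies the absolute convergence needed to reorder the double sum; the $\dualRight{}{}$ identity is obtained analogously. I expect the main subtlety to be the exponent bookkeeping --- verifying that $\tfrac1p+\tfrac1q=1+\tfrac1r$ together with $\tfrac1r+\tfrac1{r'}=1$ makes every H\"older and Young pairing close --- plus handling the edge case $r=1$, which forces $p=q=1$ and where one simply invokes the Banach-algebra estimate in $\TT^1$ already recorded above.
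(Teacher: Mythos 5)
Your proof is correct, but it reaches the norm bound by a genuinely different route than the paper.

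The paper does not compute $\Vert\dualLeft{\bx}{\bz}\Vert_r$ directly. Instead, it shows that the linear functional $\by\mapsto\langle\bz,\bx\otimes\by\rangle$ is bounded on $T(V)\subset\TT^{r'}$ via the chain $\vert\langle\bz,\bx\otimes\by\rangle\vert\le\Vert\bz\Vert_q\Vert\bx\otimes\by\Vert_{q'}\le\Vert\bz\Vert_q\Vert\bx\Vert_p\Vert\by\Vert_{r'}$ (H\"older, then the \emph{concatenation} Young inequality of Proposition~\ref{prop:young} with $1+\tfrac1{q'}=\tfrac1p+\tfrac1{r'}$), and then invokes $(\TT^{r'})^*=\TT^r$ to place $\dualLeft{\bx}{\bz}$ in $\TT^r$ with the desired bound. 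You instead estimate the $\TT^r$-norm head-on: after the level-wise operator-norm bound $\vert(\dualLeft{\bx}{\bz})^{(n)}\vert\le\sum_{k\ge0}\vert\bx^{(k)}\vert\,\vert\bz^{(n+k)}\vert$, you recognize the dominating sequence as a cross-correlation and reduce to a reflected convolution $(a\ast\check b)_{-n}$, to which the classical discrete Young inequality applies with the same exponent relation $\tfrac1p+\tfrac1q=1+\tfrac1r$. Conceptually the two arguments are dual to each other: the paper pushes $\bx$ onto the test tensor $\by$ and lets duality of $\ell^p$-spaces do the rest, whereas you push $\bx$ onto $\bz$ and compute directly. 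Your version has the mild advantage of being self-contained (no appeal to $(\TT^{r'})^*=\TT^r$, which the paper only states informally in \S\ref{sec:Tpspaces}) and of yielding the bound $\Vert\dualLeft{\bx}{\bz}\Vert_r\le\Vert\bx\Vert_p\Vert\bz\Vert_q$ constructively. The Fubini step for \eqref{eq:dual_multiplication} is fine as you sketch it; the paper's variant first verifies the identity for $\by\in T(V)$ and then extends by density, which amounts to the same bookkeeping. Your remark about the edge case $r=1$ is not actually needed, since $p=q=r=1$ is covered by the same Young inequality, but it does no harm.
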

\begin{remark}For $p=q=1$ the above proposition specializes to show that the adjoint left-~and right-tensor multiplication actually define non-associative products on $\TT^1$.
\end{remark}
\begin{proof}
    Let $\bx \in \TT^{p}$ and $\bz\in \TT^{q}$.
    For any $w\in\mathcal{W}$ we clearly have $\bx\otimes e_w \in \TT^{p}$ and since $\frac1p +\frac1q > 1$ it holds
    \begin{align*}
    \langle \bz, \bx\otimes e_w \rangle = \sum_{v\in\mathcal{W}} \bz^{vw}\bx^{v} < \infty.
    \end{align*}
    We then define $\dualLeft{\bx}{\bz} := \sum_{w\in\mathcal{W}}e_w\langle \bz, \bx\otimes e_w \rangle \in \TT$.
    Clearly, $\dualLeft{}{}$ is bilinear and it holds
    \begin{equation}\label{eq:dual_proof}
        \langle \bz, \bx\otimes\by\rangle = \langle \dualLeft{\bx}{\bz}, \by\rangle, \qquad \by\in T(V).
    \end{equation}
    Let $q^{\prime} = \frac{q-1}{q}$, $r^{\prime} = \frac{r-1}{r}$ and note that
    $1+\frac{1}{q^{\prime}} = 
    \frac{1}{p} + \frac{1}{r^{\prime}}.$
    Then by H\"older's and Young's inequality (Proposition~\ref{prop:young}) we have
    $$ \vert \langle \dualLeft{\bx}{\bz}, \by\rangle \vert =  \vert \langle \bz, \bx\otimes\by\rangle  \vert
    \le \Vert \bz \Vert_q \Vert \bx \otimes \by \Vert_{q^{\prime}} \le \Vert \bz \Vert_q\Vert \bx \Vert_p \Vert \by \Vert_{r^{\prime}}.$$
    By continuity the identity \eqref{eq:dual_proof} extends to all $\by\in \TT^{r^{\prime}}$ and in particular $\dualLeft{\bx}{\bz} \in (\TT^{r^{\prime}})^{\ast} = \TT^r$.
    An entirely analogous discussion applies to the dual of the right-tensor multiplication map.
\end{proof}

Additionally, the following properties of the adjoint multiplications, adapted  from \cite{lemercier2024high} to the $\TT^1$ setting,  will be used later.

\begin{proposition}\label{prop:left-right-technical}
    Let $a \in V^{\otimes n}$ and $b \in V^{\otimes k}$. Then  $$\dualLeft{b}{a},\; \dualRight{b}{a} \in V^{\otimes n-k}\quad\text{for }n \ge k, \qquad \text{ and }\qquad \dualLeft{b}{a},\, \dualRight{b}{a} =0,\quad\text{for } n < k.$$
    Furthermore, for $n\ge k$ it holds
\begin{align*}
\big\langle \bx\otimes a, \by\otimes b\big\rangle = \big\langle \dualLeft{\bx}{\by}, \dualRight{b}{a} \big\rangle, \qquad \bx, \by \in \TT^1.
\end{align*}
\end{proposition}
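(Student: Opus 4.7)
The plan is to prove both assertions by reducing them to a combinatorial identity on basis words and then extending to all of $\TT^1$ by bilinearity and continuity.

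For the first assertion, I would use the defining property from Proposition~\ref{prop:adjoint_mul} in the form
$\langle \dualLeft{b}{a}, c\rangle = \langle a, b\otimes c\rangle$ and $\langle \dualRight{b}{a}, c\rangle = \langle a, c\otimes b\rangle$
for all $c \in T(V)$, combined with the natural grading. Since $a \in V^{\otimes n}$ and, for homogeneous $c \in V^{\otimes j}$, the product $b\otimes c$ lies in $V^{\otimes k+j}$, the pairing on the right vanishes unless $k+j = n$, i.e., $j = n-k$. When $n < k$ no such $j \ge 0$ exists and both $\dualLeft{b}{a}$ and $\dualRight{b}{a}$ vanish; when $n\ge k$, only the component of $\dualLeft{b}{a}$ (resp.\ $\dualRight{b}{a}$) in $V^{\otimes n-k}$ can be nontrivial, proving the claim.

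For the second identity, I would first verify it on basis elements, taking $\bx = e_\alpha$, $\by = e_\beta$, $a = e_u$ with $|u| = n$, and $b = e_v$ with $|v| = k \le n$. The left-hand side collapses to $\langle e_{\alpha u}, e_{\beta v}\rangle = \mathds{1}[\alpha u = \beta v]$. Using the explicit formulas from the proof of Proposition~\ref{prop:adjoint_mul}, one computes $\dualLeft{e_\alpha}{e_\beta} = \sum_{w} e_w\,\mathds{1}[\beta = \alpha w]$ and $\dualRight{e_v}{e_u} = \sum_{w} e_w\,\mathds{1}[u = wv]$, so the right-hand side equals $1$ precisely when $\alpha$ is a prefix of $\beta$, say $\beta = \alpha \beta'$, $v$ is a suffix of $u$, say $u = u' v$, and $\beta' = u'$. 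The crux is that whenever $\alpha u = \beta v$ the length identity $|\beta| - |\alpha| = |u| - |v| \ge 0$ forces $\alpha$ to be a prefix of $\beta$, and the cancellation then gives $u = \beta' v$ so that $u' = \beta'$; conversely, the three conditions immediately imply $\alpha u = \beta v$. Hence both sides agree on basis pairs.

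The final step is a continuity argument. Both sides depend bilinearly on $\bx, \by$, and one needs to check that these maps extend continuously from $T(V)\times T(V)$ to $\TT^1\times \TT^1$. For the left side, $\bx\otimes a$ and $\by\otimes b$ lie in $\TT^1$ by Young's inequality (Proposition~\ref{prop:young}), and the dual pairing $\TT^1\times \TT^1\to\R$ is bounded via $|\langle \cdot,\cdot\rangle|\le \|\cdot\|_1\|\cdot\|_\infty \le \|\cdot\|_1\|\cdot\|_1$. For the right side, Proposition~\ref{prop:adjoint_mul} with $p=q=r=1$ gives $\dualLeft{\bx}{\by}\in\TT^1$ continuously, while $\dualRight{b}{a}$ is a fixed element of the finite-dimensional space $V^{\otimes n-k}$. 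Density of $T(V)$ in $\TT^1$ (the words $\{e_w\}_{w\in\mathcal{W}}$ form a Schauder basis) then propagates the identity from basis pairs to all of $\TT^1\times\TT^1$. I expect the main subtlety to be the combinatorial matching in the basis case, and in particular pinning down the asymmetric role of the hypothesis $n\ge k$, which is precisely what guarantees $|\beta|\ge|\alpha|$ whenever $\alpha u = \beta v$ and thus aligns the prefix/suffix decompositions on the two sides.
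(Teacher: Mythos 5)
Your proof is correct, and while the treatment of the first assertion is essentially the paper's (grading argument via pairing against homogeneous elements, which is just a coordinate-free phrasing of projecting onto words), your proof of the identity $\langle \bx\otimes a, \by\otimes b\rangle = \langle \dualLeft{\bx}{\by}, \dualRight{b}{a}\rangle$ takes a genuinely different route. The paper computes directly with general $\bx,\by\in\TT^1$: it checks absolute summability via H\"older, then performs a Fubini-type reindexing of the triple sum over word decompositions $w = w_1 w_2 w_3$ with $|w_2|=n-k$, $|w_3|=k$, which bakes the prefix/suffix alignment into the coefficient manipulation in one pass. You instead isolate the combinatorial core first — that $\alpha u = \beta v$ with $|u|=n$, $|v|=k$, $n\ge k$ forces $\alpha$ to be a prefix of $\beta$ and $v$ a suffix of $u$ with matching middle word — and then close the argument by bilinearity and density of $T(V)$ in $\TT^1$, which requires you to separately verify joint continuity of both sides (you do, correctly: $|\langle\cdot,\cdot\rangle|\le\|\cdot\|_1\|\cdot\|_1$, Young's inequality for $\bx\otimes a$, and Proposition~\ref{prop:adjoint_mul} with $p=q=r=1$ for $\dualLeftOp$). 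Both proofs hinge on the same word-combinatorial fact and on $n\ge k$ in the same essential place; the paper's version handles analysis and combinatorics simultaneously, while yours cleanly separates them, at the cost of the extra (but routine) continuity bookkeeping. Either is fine.
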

\begin{proof}
To see the first claim we simply project to words $w \in \mathcal{W}$ to obtain
$$\langle \dualLeft{b}{a}, e_w \rangle = \langle a, b \otimes e_w \rangle = \sum_{v\in \mathcal{W}}b^va^{vw} = \sum_{\substack{v\in \mathcal{W}\\|v|=k}}b^va^{vw},$$
where in the last equality we have used that $b \in V^{\otimes k}$. Further, using that $a \in V^{\otimes n}$ we see that the right-hand side is zero whenever $|vw| \neq n$, i.e., whenever $|w| \neq n - k$, from which we readily see the first claim.
For the adjoint right-multiplication the argument is entirely analogous.
For the second claim, we note that by H\"older's inequality it holds $\sum_{k=0}^\infty |\langle (\bx\otimes a)^{(k)}, (\by\otimes b)^{(k)} \rangle|<\infty$ and so the following rearrangements of absolutely convergent sums is justified:
\begin{align*}
\big\langle \bx\otimes a, \by\otimes b\big\rangle 
&= \sum_{\substack{w\in\mathcal{W}\\|w|\ge n}}(\bx\otimes a)^w (\by\otimes b)^w \\
&= \sum_{w_1\in\mathcal{W}}\sum_{\substack{w_2\in\mathcal{W}\\|w_2|=n-k}}\sum_{\substack{w_3\in\mathcal{W}\\|w_3|=k}}\bx^{w_1}a^{w_2w_3} \by^{w_1w_2} b^{w_3} \\
&= \sum_{w_1\in\mathcal{W}}\sum_{\substack{w_2\in\mathcal{W}\\|w_2|=n-k}}\bx^{w_1} \by^{w_1w_2}\sum_{\substack{w_3\in\mathcal{W}\\|w_3|=k}}a^{w_2w_3} b^{w_3} \\
&= \sum_{\substack{w_2\in\mathcal{W}\\|w_2|=n-k}}\left(\sum_{w_1\in\mathcal{W}}\bx^{w_1} \by^{w_1w_2}\right)(\dualRight{b}{a})^{w_2}\\
&= \sum_{\substack{w_2\in\mathcal{W}\\|w_2|=n-k}}\dualLeft{\bx}{\by}^{w_2} (\dualRight{b}{a})^{w_2}\\
&= \big\langle \pi_{n-k}\dualLeft{\bx}{\by}, \pi_{n-k}\dualRight{b}{a} \big\rangle,
\end{align*}
The result now follows from noting that $\dualRight{b}{a} \in V^{\otimes n-k}$,
\end{proof}

\subsection{Dilation and convergence radius}\label{sec:radius}
Another subalgebra of $\TT$ is given by tensor series with a certain convergence radius.
Below we will define the convergence radius conveniently using the dilation operator
$$\delta_{\lambda}: \TT \to \TT, \quad \bx\mapsto (\lambda^n \bx^{(n)})_{n=0,1,\dots},$$
for $\lambda > 0$.
By definition of the tensor product it clearly holds $\delta_{\lambda}(\bx\otimes\by) = \delta_{\lambda}\bx\otimes\delta_{\lambda}\by$.
\begin{proposition}
    For any $\lambda \in (0, \infty]$,
\begin{align*}
    \mathcal{R}^{\lambda} := \left\{\bx\in \TT \;\bigg\vert\; \Vert \delta_\alpha \bx\Vert_1 = \sum_{n=0}^\infty |\bx^{(n)}|\alpha^{n} < \infty, \quad \alpha \in (0, \lambda)\right\}
\end{align*}
defines a subalgebra of $\TT$.
\end{proposition}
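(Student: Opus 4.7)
The plan is to verify that $\mathcal{R}^\lambda$ is a linear subspace of $\TT$ and that it is closed under the tensor product. Linearity is immediate: for any $\bx, \by \in \mathcal{R}^\lambda$, any $\mu \in \R$, and any $\alpha \in (0,\lambda)$, the dilation $\delta_\alpha$ is a linear operator on $\TT$, so by the triangle inequality for the norm $\Vert \cdot \Vert_1$ on $\TT^1$,
\[
\Vert \delta_\alpha(\bx + \mu\by)\Vert_1 \le \Vert \delta_\alpha \bx\Vert_1 + |\mu|\,\Vert \delta_\alpha \by\Vert_1 < \infty.
\]

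For closure under $\otimes$, the key ingredients are already in place. First, the compatibility $\delta_\alpha(\bx\otimes\by) = \delta_\alpha \bx \otimes \delta_\alpha \by$ was recorded just above the statement. Second, the corollary following Proposition~\ref{prop:young} says that $(\TT^1, +, \otimes)$ is a Banach algebra, i.e.\ $\Vert \bu \otimes \bv \Vert_1 \le \Vert \bu\Vert_1 \Vert \bv\Vert_1$ for $\bu,\bv\in\TT^1$ (this is Young's inequality at $p=q=r=1$). Combining these two facts, for $\bx,\by\in\mathcal{R}^\lambda$ and arbitrary $\alpha\in(0,\lambda)$ we have $\delta_\alpha\bx,\delta_\alpha\by\in\TT^1$, and therefore
\[
\Vert \delta_\alpha(\bx\otimes\by)\Vert_1 \;=\; \Vert \delta_\alpha\bx \otimes \delta_\alpha\by\Vert_1 \;\le\; \Vert\delta_\alpha\bx\Vert_1\,\Vert\delta_\alpha\by\Vert_1 \;<\;\infty,
\]
which exhibits $\bx\otimes\by\in\mathcal{R}^\lambda$.

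There is no real obstacle; the proposition is essentially a packaging of the two prior facts (Young's inequality for $\TT^1$ and the homomorphism property of $\delta_\alpha$), together with the elementary observation that the condition ``for all $\alpha\in(0,\lambda)$'' is preserved by taking sums and products since each $\alpha$ can be handled independently. The only point worth a brief sentence in the written proof is that the case $\lambda=\infty$ requires no extra argument: it simply means the estimates above must hold for every $\alpha\in(0,\infty)$, and the pointwise-in-$\alpha$ reasoning applies verbatim.
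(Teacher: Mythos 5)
Your proof is correct and follows exactly the paper's argument: combine the algebra-morphism property $\delta_\alpha(\bx\otimes\by)=\delta_\alpha\bx\otimes\delta_\alpha\by$ with Young's inequality (the $\TT^1$ Banach-algebra corollary) to conclude $\Vert\delta_\alpha(\bx\otimes\by)\Vert_1\le\Vert\delta_\alpha\bx\Vert_1\Vert\delta_\alpha\by\Vert_1<\infty$ for each $\alpha\in(0,\lambda)$. The only difference is that you also spell out the (trivial) linear-subspace check, which the paper omits.
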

\begin{remark}
    The case $\lambda = \infty$ is of particular importance in \cite{chevyrev2016characteristic}, where it is proven that the distribution of a random variable in the group of group-like elements $\mathcal{G}$ (see the definitions in the next section) is characterized by its expectation if the latter lies within $\mathcal{R}^{\infty}$. In Theorem~\ref{cor:exponential_moments} we establish a necessary condition on the L\'evy measure of inhomogeneous L\'evy processes for having an expected signature in $\mathcal{R}^{\infty}$.
\end{remark}
\begin{proof}
    For $\bx, \by \in \mathcal{R}^{\lambda}$ it follows directly from Proposition~\ref{prop:young} that for any $\alpha\in(0,\lambda)$
    we have $\Vert\delta_{\alpha}(\bx\otimes\by)\Vert_1 =  \Vert\delta_{\alpha}\bx\otimes\delta_{\alpha}\by\Vert_1 \le \Vert\delta_\alpha \bx\Vert_1 \Vert\delta_\alpha \by\Vert_1 <\infty$, thus $\bx\otimes\by \in \mathcal{R}^{\lambda}$.
\end{proof}

\subsection{Free Lie algebras and groups}
The free Lie algebra over $\{e_1, \dots, e_d\}$ is embedded in $T(V)$ with commutator bracket $[\bx, \by]_\otimes := \bx\otimes \by - \by\otimes \bx$ and is denoted by $\mathfrak{g}(V)$.
Following \cite{reutenauer2003free}, a graded basis $\{\mathfrak{u}_w\}_{w\in \mathcal{I}}$ of $\mathfrak{g}(V)$ is suitably indexed by a subset of words $\mathcal{I} \subset \mathcal{W}$.
The grading of $\mathcal{I}$ is by word length and we define $\mathcal{I}_N := \{ w \in \mathcal{I} \;\vert\; |w|\le N\}$.
Similar to the extended tensor algebra we define the space of formal Lie series by $\mathrm{Lie}((V)) = \{ \sum_{w\in\mathcal{I}} \mathfrak{u}_w \bx^w\;\vert\; \bx^w \in \R\}$, which forms a sub-Lie algebra of $\TT_0$.
The image under the truncation map
$$\mathfrak{g}^N(V) := \pi_{(0,N)}\mathrm{Lie}((V)) =  \mathrm{span}\{ \mathfrak{u}_w \;\vert\; w \in \mathcal{I}_N\}.$$
yields the free step-$N$ nilpotent Lie algebra with the truncated commutator bracket $[\cdot, \cdot]_{\otimes_N}$.
The exponential image $\mathcal{G} = \exp_\otimes(\mathrm{Lie}((V)))$ yields a subgroup of $\TT_1$, which, borrowing from Hopf algebra terminology, is called the group of \emph{group-like elements}.
The exponential image $G^N(V) = \pi_{(0,N)}\exp_{\otimes}(\mathfrak{g}^N(V))$ forms a group under the truncated tensor multiplication and equipped with the subspace topology of $(T^N(V), \Norm{\cdot})$ it forms the \textit{free step-$N$ nilpotent Lie group}.

\subsection{Free developments}\label{sec:free_developments}

The classical Cartan development of a smooth path $\dot\gamma: [0,\infty)\to \mathfrak{g}^N(V)$ into the Lie group $G^N(V)$ is obtained by solving the differential equation
\begin{align}\label{eq:cartan}
    \dot{X}(t) = X(t) \otimes_N \dot\gamma(t), \qquad X_0 = \mathbf{1}\in\ G^N(V).
\end{align}
While such developments can be understood from a general Lie perspective, here, as explained below, we understand and solve the above equation simply by projecting to tensor levels/components in hierarchical order.
The solution is a smooth path in the step-$N$ group $X: [0,\infty) \to G^N(V)$.
Conversely, any \emph{smooth rough path} \cite{bellingeri2022smooth}, i.e., any smooth path $X: [0,\infty) \to G^N(V)$ is the development of a smooth path $\dot\gamma: [0,\infty)\to \mathfrak{g}^N(V)$.
The \emph{minimal extension} of the smooth rough path $X$ is obtained by fully developing $\gamma$ into $\TT_1$:
\begin{align}\label{eq:free_development}
    S(t) = 1 + \int_{0}^t S(u) \otimes \dot\gamma(u)\dd{u}.
\end{align}
For further reference, we will state how equations of the above form are understood precisely.
The integral of a path $\mathfrak{y}: [0,\infty) \to \TT$ is understood componentwise, i.e.,
$$\int_0^t \mathfrak{y}(s)\dd{s} := \sum_{w\in\mathcal{W}}e_w\left(\int_0^{t}\mathfrak{y}(s)^{w}\dd{s}\right) \in \TT, \quad t\ge0,$$
thus well defined whenever $\mathfrak{y}(\cdot)^{w}$ is locally integrable for all $w\in\mathcal{W}$.
We understand \eqref{eq:free_development} as an equality between paths taking values in $\TT$.
In the specific case of \eqref{eq:free_development}, we solve by first projecting to $w = \varnothing$, leading to $S(\cdot)^\varnothing \equiv 1$, and then inductively for $w\in\mathcal{W}\setminus\{\varnothing\}$ we get
\begin{align*} S(t)^{w} &= \sum_{w_1 w_2 = w} \int_0^t S(t)^{w_2} \dot\gamma(t)^{w_1} \dd{t}  \\
&= \sum_{k = 1}^{|w|}\sum_{w_k \cdots w_1= w}\int_0^t\int_0^{s_1}\cdots\int_0^{s_{k-1}} \dot\gamma(s_k)^{w_k}\dd{s_k} \cdots \dot\gamma(s_1)^{w_1} \dd{s_1}, \quad t \ge0, \;w\in\mathcal{W},
\end{align*}
where the inner summation is over all deconcatenations ${w_k\cdots w_1= w}$ of the word $w$ into non-empty $k$ words.
It can then be verified that $S$ takes values in $\mathcal{G}$ and is therefore called the \emph{signature} of $\gamma = \int_0^\cdot \dot{\gamma}(s) \dd{s}$ \cite[Theorem 2.8]{bellingeri2022smooth}.

Note that equation \eqref{eq:free_development} makes perfect sense for any path $\dot\gamma:[0,\infty) \to \TT_0$ that is componentwise integrable.
The solution $S$ is constructed in the exact same way, but will generally take values in the larger group $\TT_1$.
With the spirit of the \textit{free noncommutative algebra} in mind, we will call the solution the ``free development''.

For consistency with the signature of $V$-valued paths, we define this development in terms of the underlying path $\gamma = \int_0^\cdot \dot{\gamma}(u) \dd{u}$.
Our motivation being the treatment of differential characteristics (see Section~\ref{sec:inhom_levy} and Section~\ref{sec:moment_condition}) we will restrict our attention to componentwise absolutely continuous paths; the generalization to continuous $1$-variation paths is straightforward.

\begin{definition}\label{def:free_development}
    Let $\gamma:[0,\infty) \to \TT_0$ be componentwise absolutely continuous.
    We define the \textit{free development} of ${\gamma}$ starting in $s\ge0$, $$\mathcal{S}(\gamma)_{s,\cdot}: [0,\infty) \to \TT_1, \quad t\mapsto S(\gamma)_{s,t},$$
    as the unique solution to \begin{align}\label{eq:free_development_start}
    \mathcal{S}(\gamma)_{s,t} = 1 + \int_{s}^t \mathcal{S}(\gamma)_{s,u} \otimes \dot\gamma(u)\dd{u}, \qquad t \ge s,
    \end{align}
    We will also write $\mathcal{S}(\gamma) = \mathcal{S}(\gamma)_{0, \cdot}$.
\end{definition}
We collect a few basic properties of the free development in the following proposition. 

\begin{proposition}%
\label{lem:gronwall} Let $\gamma:[0,\infty) \to \TT_0$ be a componentwise absolutely continuous path in $\TT_0$.
Then for its free development $\mathcal{S}(\gamma)$ it holds:
\begin{enumerate}[label=\arabic*.)]
    \item $\mathcal{S}(\gamma)_{s,t} = \mathcal{S}(\gamma)_{s,u}\otimes \mathcal{S}(\gamma)_{u,t},$ for all $t \ge u \ge s \ge 0$;
    \item $\delta_{\lambda}\mathcal{S}(\gamma) = \mathcal{S}(\delta_{\lambda}\gamma)$, for all $\lambda  \in \R$;
    \item $\mathcal{S}(\gamma)_{s,t} \in \mathcal{G}$ for all $t \ge s \ge0$ if and only if $\dot\gamma(t) \in \mathrm{Lie}((V))$ for a.e. $t\ge0$. 
    In this case we write $$\mathrm{Sig}(\gamma)_{s,t} := \mathcal{S}(\gamma)_{s,t}, \qquad  t\ge s \ge0;$$
    \item If $\dot\gamma(\cdot) \equiv \bx \in \TT_0$ then $\mathcal{S}(\gamma)_{s,t} = \exp_\otimes((t-s) \bx)$ for all $t\ge s \ge 0$;
\end{enumerate}
\end{proposition}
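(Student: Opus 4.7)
The plan is to exploit the hierarchical componentwise structure of \eqref{eq:free_development_start}: each component $\mathcal{S}(\gamma)_{s,t}^w$ is an iterated integral determined by the lower-level components $\mathcal{S}(\gamma)_{s,\cdot}^{w'}$ with $|w'|<|w|$, so uniqueness of solutions holds level by level. Items (1), (2), (4) then reduce to verifying that a candidate satisfies the same defining integral equation.

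For (1), both sides of the asserted identity, regarded as $\TT$-valued functions of $t\ge u$, satisfy the equation $\dot X(t) = X(t)\otimes\dot\gamma(t)$ with initial datum $X(u) = \mathcal{S}(\gamma)_{s,u}$, so componentwise uniqueness yields equality. For (2), applying $\delta_\lambda$ to \eqref{eq:free_development_start} and using $\delta_\lambda(\bx\otimes\by) = \delta_\lambda\bx\otimes\delta_\lambda\by$ together with the fact that $\delta_\lambda$ commutes with the componentwise integral gives
\[
\delta_\lambda\mathcal{S}(\gamma)_{s,t} = 1 + \int_s^t \delta_\lambda\mathcal{S}(\gamma)_{s,u}\otimes(\delta_\lambda\dot\gamma)(u)\,\d u,
\]
identifying $\delta_\lambda\mathcal{S}(\gamma)$ as the free development of $\delta_\lambda\gamma$. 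For (4), since $\bx\in\TT_0$, only finitely many terms in the series $\exp_\otimes((t-s)\bx) = \sum_n (t-s)^n\bx^{\otimes n}/n!$ contribute to any fixed tensor level $V^{\otimes n}$, so termwise differentiation is legitimate and yields $\dot S(t) = S(t)\otimes\bx = S(t)\otimes\dot\gamma(t)$ with $S(s) = 1$.

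The main obstacle is (3). For the ``only if'' direction, projecting to the finite-dimensional truncation $G^N(V)$—a classical Lie group with Lie algebra $\mathfrak{g}^N(V)$—one sees that $\pi_{(0,N)}\dot\gamma(t) = \pi_{(0,N)}\mathcal{S}(\gamma)_{s,t}^{-1}\otimes_N \partial_t\pi_{(0,N)}\mathcal{S}(\gamma)_{s,t}$ is the right-logarithmic derivative of a $G^N(V)$-valued trajectory, hence lies in $\mathfrak{g}^N(V)$ for a.e.\ $t$; letting $N\to\infty$ places $\dot\gamma(t)$ in $\mathrm{Lie}((V))$ almost everywhere. For the ``if'' direction, I would reduce to the smooth rough path setting: when $\dot\gamma(t)\in\mathrm{Lie}((V))$ a.e., the truncation $\pi_{(0,N)}\dot\gamma$ takes values in $\mathfrak{g}^N(V)$, so by the Cartan development result for smooth rough paths \cite[Theorem 2.8]{bellingeri2022smooth} (applied to \eqref{eq:cartan} on $G^N(V)$) the solution stays in $G^N(V)$. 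The short bookkeeping required is to verify that the truncated projection $\pi_{(0,N)}\mathcal{S}(\gamma)_{s,t}$ of the free development coincides with this classical Cartan development of $\pi_{(0,N)}\gamma$; this follows from the compatibility of $\otimes$ with $\otimes_N$ on levels $\le N$ and the componentwise uniqueness already invoked. Since $N\in\N$ is arbitrary, $\mathcal{S}(\gamma)_{s,t}\in\mathcal{G}$.
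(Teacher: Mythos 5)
Your proposal follows essentially the same route as the paper for all four items: (1) componentwise/hierarchical uniqueness of the defining linear equation, (2) the fact that $\delta_\lambda$ is an algebra morphism commuting with the componentwise integral, (3) reduction to the step-$N$ Cartan development, with the ``if'' direction from the well-posedness of \eqref{eq:cartan} in $G^N(V)$ and the ``only if'' direction from the logarithmic/diagonal derivative lying in $\mathfrak{g}^N(V)$ (both citing \cite{bellingeri2022smooth}), and (4) a direct term-by-term verification that $\exp_\otimes((t-s)\bx)$ solves the equation. The only cosmetic difference is in item (1): you verify that both sides solve the same initial-value problem, whereas the paper shows $\mathcal{S}(\gamma)_{s,u}^{-1}\otimes\mathcal{S}(\gamma)_{s,t}$ solves the equation that uniquely characterizes $\mathcal{S}(\gamma)_{u,t}$—equivalent arguments.
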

\begin{proof}
    \begin{enumerate}[label=\arabic*.)] 
    \item This simply follows from the flow property of the linear differential equation defining $\mathcal{S}(\gamma)$:
    Let $s\ge u\ge0$ be arbitrarily fixed. Since $\mathcal{S}(\gamma)_{s,u} \in \TT_1$ is invertible we can define $S(t):= \mathcal{S}(\gamma)_{s,u}^{-1}\mathcal{S}(\gamma)_{s,t}$ for all $t \ge u$.
    Then it holds for all $t \ge u$
    \begin{align*}
        S(t) &= \mathcal{S}(\gamma)_{s,u}^{-1}\mathcal{S}(\gamma)_{s,t} \\
        &= \mathcal{S}(\gamma)_{s,u}^{-1}\left( 1 + \int_s^t\mathcal{S}(\gamma)_{s,r} \dot{\gamma}(r) \dd{r} \right) \\
        &= \mathcal{S}(\gamma)_{s,u}^{-1}\left( 1 + \int_s^u\mathcal{S}(\gamma)_{s,r} \dot{\gamma}(r) \dd{r} + \int_u^t\mathcal{S}(\gamma)_{s,r} \dot{\gamma}(r) \dd{r} \right)\\
        &= \mathcal{S}(\gamma)_{s,u}^{-1}\left(\mathcal{S}(\gamma)_{s, u} + \int_u^t\mathcal{S}(\gamma)_{s,r} \dot{\gamma}(r) \dd{r} \right)\\
        &= 1 + \int_u^t\mathcal{S}(\gamma)_{s,u}^{-1}\mathcal{S}(\gamma)_{s,r} \dot{\gamma}(r) \dd{r} \\
        &= 1 + \int_u^t S(r) \dot{\gamma}(r) \dd{r},
    \end{align*}
    which confirms that $S(t) = \mathcal{S}(\gamma)_{u,t}.$
    \item This follows directly by recalling that $\delta_\lambda: \TT \to \TT$ is an algebra morphism.

        \item Note that for any $\bx\in\TT_1$ we have $\pi_{(0,N)}\log_\otimes(\bx) = \log_{\otimes_N}(\pi_{(0,N)}\bx).$ Hence, it suffices to conclude that for the truncated development $X := \pi_{(0,N)} \mathcal{S}(\gamma)$ with $X = \int_0^\cdot X_t \otimes_N \pi_{(0,N)}\dot\gamma(t)\dd{t}$ we have $X(t) \in G^N(V)$ for all $t\ge0$ if and only if $\pi_{(0,N)}\dot\gamma(t) \in \mathfrak{g}^N(V)$ for a.e. $t\ge0$.
    Here, the `if'-direction follows from the well-posedness of the differential equation \eqref{eq:cartan} (in the geometric sense) defining the Cartan development (see  \cite{iserles1999solution}). 
    The `only if'-direction follows by observing that for the diagonal derivative we have $\pi_{(0,N)}\dot\gamma(t) = \lim_{h\to0}\frac{1}{h}X(t)^{-1}X(t+h) \in \mathfrak{g}^N(V)$ (see \cite[Proposition 2.6]{bellingeri2022smooth}.)
    Alternatively, the `if'-direction is proven directly using the Magnus expansion of $\log_\otimes(\mathcal{S}(\gamma)_t)$ in $\TT_0$ (see e.g. \cite[Theorem~5.1.(ii)]{FHT22}.)
    \item By definition of the tensor exponential it holds
    \begin{align*}
        1 +\int_s^t \exp_\otimes(\bx (u-s))\bx\dd{u} &= 1 +  \int_s^t \sum_{k=0}^\infty\frac{1}{k!}\bx^{\otimes(k+1)} (u-s)^{k}\dd{u}\\
        &= 1 + \sum_{k=0}^\infty\frac{1}{k!}\bx^{\otimes(k+1)} \int_s^t  (u-s)^{k}\dd{u} \\
        &= \exp_\otimes(\bx (t-s)),
    \end{align*}
    for all $t\ge s\ge 0$, stressing that, when projected to tensor levels, the second equality is simply an interchange of a finite summation and integration.
\end{enumerate}
\end{proof}

Next we will provide a few handy $\TT^1$-estimates for the free development. 
To this end, recall that $\Vert \cdot \Vert_1$ denotes the norm on the $1$-summable tensor series $\TT^1$ defined in \eqref{eq:def_T1}.
\newcommand{\onevar}[2]{\Vert #1\Vert_{\mathrm{AC},1;[0,#2]}}
\newcommand{\onevartwo}[3]{\Vert #1\Vert_{\mathrm{AC},1;[#2,#3]}}
For ease of notation we also define %
for any $\beta: [0,\infty) \to \TT$ componentwise absolutely continuous  %
$$\onevartwo{\beta}{s}{t} = \int_{s}^t \Vert\dot\beta(u)\Vert_1 \dd{u}, \qquad t \ge s \ge 0.$$

\begin{proposition}\label{prop:free_dev_estimates}
 Let $\gamma = \int_0^\cdot \dot{\gamma}(s) \dd{s}$ be a componentwise absolutely continuous path in $\TT_0$ with $\onevartwo{\gamma}{s}{t} < \infty$ for some $t \ge s \ge0$.
 Then the following estimates hold for the free development $\mathcal{S}(\gamma)$:
\begin{enumerate}[label=\arabic*.)]
    \item\label{itm:bell_estimate} For all $n\in\N$:
    \begin{align*}
    |\pi_n \mathcal{S}(\gamma)_{s,t}| \le \frac{1}{n!} B_n\Big(1! \onevartwo{\pi_1\gamma}{s}{t},\, \dots,\, n! \onevartwo{\pi_n\gamma}{s}{t}\Big),
    \end{align*}
    where 
    $B_n(y_1, \dots, y_n) =  \frac{\dd^n}{\dd\lambda^n} \exp(\sum_{k=1}^n y_k \frac1{k!}\lambda^k)\vert_{\lambda=0}$ denotes the $n$th complete exponential Bell polynomial\footnote{\cite[Section~3.3]{comtet1974advanced}.}.
    \item \label{itm:grnwld_estimate}
    For all $s\in[0,t]$:
    $$\Vert \mathcal{S}(\gamma)_{s,t} \Vert_{1} \le  e^{\onevartwo{\gamma}{s}{t}}.$$
    In particular $\mathcal{S}(\gamma): [0,\infty) \to \TT^1$ is absolutely continuous with 
    $$\onevartwo{\mathcal{S}(\gamma)}{s}{t}  \le  e^{\onevartwo{\gamma}{s}{t}} - 1.$$
    
    \item\label{itm:lpschtz_estimate} For $\beta = \int_0^\cdot\dot{\beta}(u) \dd{u}: [0,\infty) \to \TT_0$  componentwise absolutely continuous with $\onevartwo{\beta}{0}{t} < \infty$ and all $s\in [0,t]:$
    \begin{align*}
    \Vert\mathcal{S}(\gamma)_{s,t} - \mathcal{S}(\beta)_{s,t}\Vert_1 
    \le&~ \int_s^t e^{\onevartwo{\gamma}{s}{u}+ \onevartwo{\beta}{u}{t}} \Vert \dot{\gamma}(u) - \dot{\beta}(u)\Vert_1\dd{u} \\
    \le&~ \bigg.e^{\onevartwo{\gamma}{s}{t} + \onevartwo{\beta}{s}{t}} \onevartwo{\gamma - \beta}{s}{t}.
\end{align*}
    \item\label{itm:inner_truncation} For all $N\in\mathbb{N}$:
    $$\Vert \mathcal{S}(\gamma)_{s,t} - \mathcal{S}(\pi_{(0,N)}\gamma)_{s,t}\Vert_1 \le e^{\onevartwo{\gamma}{s}{t}} \onevartwo{\gamma - \pi_{(0,N)}\gamma}{s}{t}.$$
    \item\label{itm:outer_truncation}
    For all $M, N \in \N$ with $M\ge N$:
    \begin{align*}  
        \Vert \mathcal{S}(\pi_{(0,N)}\gamma)_{s,t} - \pi_{(0,M-1)}&\mathcal{S}(\pi_{(0,N)}\gamma)_{s,t}\Vert_1 \\
        &\le e^{\onevar{\pi_{(0,N)}\gamma}{t}} \frac{(\onevar{\pi_{(0,N)}\gamma}{t})^{\ceil{M /N}}}{\ceil{M /N}!}.
    \end{align*}
\end{enumerate}    
\end{proposition}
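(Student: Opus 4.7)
The strategy is to base all five estimates on the Picard expansion
\begin{equation*}
\mathcal{S}(\gamma)_{s,t} = 1 + \sum_{k=1}^\infty \int_{s<u_k<\cdots<u_1<t} \dot\gamma(u_k)\otimes\cdots\otimes\dot\gamma(u_1)\,\dd u,
\end{equation*}
which converges in $\TT^1$ because $(\TT^1,\otimes)$ is a Banach algebra by Young's inequality with $p=q=r=1$ (Proposition~\ref{prop:young}). Since $\Vert\dot\gamma(u_k)\Vert_1\cdots\Vert\dot\gamma(u_1)\Vert_1$ is symmetric in $(u_1,\ldots,u_k)$, the ordered-simplex integral equals $\tfrac{1}{k!}$ times the product of one-dimensional integrals, giving the term-wise bound $\tfrac{1}{k!}\onevartwo{\gamma}{s}{t}^{k}$; summing in $k$ yields~\ref{itm:grnwld_estimate}. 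The absolute-continuity refinement then follows by differentiating the defining integral equation and integrating the telescoping identity $\tfrac{\dd}{\dd u}e^{\onevartwo{\gamma}{s}{u}}=\Vert\dot\gamma(u)\Vert_1 e^{\onevartwo{\gamma}{s}{u}}$.

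For~\ref{itm:bell_estimate}, project the Picard expansion to level $n$. Writing $\dot\gamma(u_j)=\sum_{m\ge1}\pi_m\dot\gamma(u_j)$ and using compatibility of the tensor norm gives
\begin{equation*}
|\pi_n\mathcal{S}(\gamma)_{s,t}|\le\sum_{k=1}^n\sum_{\substack{n_1+\cdots+n_k=n\\ n_j\ge1}}\int_{s<u_k<\cdots<u_1<t}\prod_{j=1}^k|\pi_{n_j}\dot\gamma(u_j)|\,\dd u,
\end{equation*}
a sum indexed by compositions of $n$. The main combinatorial step is the symmetrization identity $\sum_{\sigma\in S_k}\int_{s<u_k<\cdots<u_1<t}\prod_j f_{\sigma(j)}(u_j)\,\dd u=\prod_j\int_s^t f_j(u)\,\dd u$ for non-negative $f_j$, obtained by reassembling the ordered simplices into the cube $[s,t]^k$. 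Applied to $f_j=|\pi_{n_j}\dot\gamma|$ and grouping compositions by their underlying multiset (with $j_i$ parts equal to $i$), this gives, writing $a_i:=\onevartwo{\pi_i\gamma}{s}{t}$,
\begin{equation*}
\sum_{\text{orderings of a multiset}}\int_{s<u_k<\cdots<u_1<t}\prod_j|\pi_{n_j}\dot\gamma(u_j)|\,\dd u=\frac{1}{\prod_i j_i!}\prod_i a_i^{j_i}.
\end{equation*}
Summing over multisets and over $k$ then produces exactly $[\lambda^n]\exp\bigl(\sum_{i\ge1}a_i\lambda^i\bigr)=\tfrac{1}{n!}B_n(1!a_1,\ldots,n!a_n)$.

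For~\ref{itm:lpschtz_estimate}, let $\Delta(t):=\mathcal{S}(\gamma)_{s,t}-\mathcal{S}(\beta)_{s,t}$ and observe that $\dot\Delta(t)=\Delta(t)\otimes\dot\beta(t)+\mathcal{S}(\gamma)_{s,t}\otimes(\dot\gamma(t)-\dot\beta(t))$ with $\Delta(s)=0$. Variation of constants against the right-multiplicative fundamental solution $\mathcal{S}(\beta)_{u,\cdot}$ of $\dot Y=Y\otimes\dot\beta$ yields
\begin{equation*}
\mathcal{S}(\gamma)_{s,t}-\mathcal{S}(\beta)_{s,t}=\int_s^t \mathcal{S}(\gamma)_{s,u}\otimes(\dot\gamma(u)-\dot\beta(u))\otimes\mathcal{S}(\beta)_{u,t}\,\dd u,
\end{equation*}
and taking $\Vert\cdot\Vert_1$, then applying submultiplicativity together with~\ref{itm:grnwld_estimate} to each factor, gives both forms of the inequality. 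Item~\ref{itm:inner_truncation} is the special case $\beta=\pi_{(0,N)}\gamma$: keeping the integral form and using the pointwise bound $\onevartwo{\pi_{(0,N)}\gamma}{u}{t}\le\onevartwo{\gamma}{u}{t}$ inside the integrand merges the two exponentials into the single factor $e^{\onevartwo{\gamma}{s}{t}}$. Finally, item~\ref{itm:outer_truncation} is a tail estimate on the Picard expansion of $\eta:=\pi_{(0,N)}\gamma$: since $\dot\eta(u)$ has components only at levels $1,\ldots,N$, a $k$-fold iterated tensor product contributes only at levels in $[k,kN]$, so projecting onto levels $\ge M$ restricts the series to $k\ge\ceil{M/N}$. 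Each such term is bounded in $\TT^1$ by $\tfrac{1}{k!}\onevar{\eta}{t}^{k}$ via the symmetric-simplex argument of~\ref{itm:grnwld_estimate}, and the elementary tail bound $\sum_{k\ge K}\tfrac{x^k}{k!}\le\tfrac{x^K}{K!}e^{x}$ closes the estimate. The main obstacle is the combinatorial bookkeeping in~\ref{itm:bell_estimate}---correctly passing from compositions to multisets and identifying the resulting generating function with $\tfrac{1}{n!}B_n(1!a_1,\ldots,n!a_n)$; the remaining items reduce to standard Picard-plus-Young-inequality manipulations adapted to $\TT^1$.
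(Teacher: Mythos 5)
Your proposal is correct, and it reaches all five estimates by a genuinely different route than the paper. The paper argues recursively at the level of the defining integral equation: item~\ref{itm:bell_estimate} is an induction on the tensor level using the derivative identity $\partial_{y_i}B_n=\binom{n}{i}B_{n-i}$; items~\ref{itm:grnwld_estimate} and~\ref{itm:lpschtz_estimate} are Gr\"onwall arguments (the latter via the two-term splitting $\dot\Delta=\Delta\otimes\dot\beta+\mathcal{S}(\gamma)\otimes(\dot\gamma-\dot\beta)$ plus Lemma~\ref{lem:special_gronwal}); and item~\ref{itm:outer_truncation} goes through the partition formula $B_n=n!\sum_{\ell}\prod_i y_i^{\ell_i}/((i!)^{\ell_i}\ell_i!)$ and the inclusion of the index sets $L^N_n$ into $\{\ell:\sum_i\ell_i\ge\lceil M/N\rceil\}$. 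You instead work throughout with the explicit Picard/Chen expansion and the simplex-symmetrization identity, which yields exact combinatorial expressions before any estimation: your multiset bookkeeping for item~\ref{itm:bell_estimate} is correct (each distinct ordering of a multiset with multiplicities $j_i$ is hit $\prod_i j_i!$ times in the sum over $S_k$, giving $\prod_i a_i^{j_i}/j_i!$ per multiset, and summing over $\sum_i ij_i=n$ is exactly $[\lambda^n]\exp(\sum a_i\lambda^i)=\tfrac1{n!}B_n(1!a_1,\dots,n!a_n)$), and your level-counting argument for item~\ref{itm:outer_truncation} (a $k$-fold product of $\pi_{(0,N)}\dot\gamma$ lives in levels $[k,kN]$, so levels $\ge M$ force $k\ge\lceil M/N\rceil$) is arguably more transparent than the paper's partition manipulation. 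Your item~\ref{itm:lpschtz_estimate} replaces Gr\"onwall by the exact Duhamel identity $\mathcal{S}(\gamma)_{s,t}-\mathcal{S}(\beta)_{s,t}=\int_s^t\mathcal{S}(\gamma)_{s,u}\otimes(\dot\gamma(u)-\dot\beta(u))\otimes\mathcal{S}(\beta)_{u,t}\,\dd u$, which is legitimate (the backward derivative $\partial_u\mathcal{S}(\beta)_{u,t}=-\dot\beta(u)\otimes\mathcal{S}(\beta)_{u,t}$ follows componentwise from the flow property in Proposition~\ref{lem:gronwall}) and gives the first inequality in one line. The trade-off: your approach buys exactness and a cleaner item~\ref{itm:outer_truncation} at the price of heavier combinatorics in item~\ref{itm:bell_estimate} and of having to justify term-by-term manipulation of the infinite Picard series — unproblematic here since all rearrangements are of non-negative terms and the identification of $\mathcal{S}(\gamma)$ with its iterated-integral expansion is already the paper's componentwise construction in Section~\ref{sec:free_developments}.
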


\begin{remark}\label{rmk:grnwl_estimate}
Note that it is not possible to adapt the general estimate in \ref{itm:grnwld_estimate} by using a weaker $\TT^p$-norm on the right-hand side; for any choice of $\TT^q$-norm on the left-hand side.
Indeed, similarly to the example provided after Proposition~\ref{prop:young}, for any $p,q>1$ one finds a tensor series $\bx \in \TT^p$ such that $\Vert\exp_\otimes(\bx)\Vert_q = \infty$.
\end{remark}

Before providing the proof of the above statements, we comment on the two truncation error bounds in \ref{itm:inner_truncation}-\ref{itm:outer_truncation} and their significance for the later part of the paper.
Note that in case $\gamma$ takes values in $V$, i.e., $\gamma = \pi_{(0,1)}\gamma$, then the estimate in \ref{itm:outer_truncation} reduces to the usual factorial truncation estimate for the signature
    $$\Vert \Sig{\gamma}_{s,t} - \pi_{(0,M-1)}\Sig{\gamma}_{s,t}\Vert_1 \le e^{\onevartwo{\gamma}{s}{t}} \frac{\onevartwo{\gamma}{s}{t}^{M}}{M!}.$$
    If we are interested in a direct truncation bound for general $\gamma$, we can combine inequalities in \ref{itm:inner_truncation} and \ref{itm:outer_truncation} to the following upper bound for $\Vert \Sig{\gamma}_{s,t} - \pi_{(0,M-1)}\Sig{\gamma}_{s,t}\Vert_1$:
    \begin{align*}
    e^{\onevartwo{\gamma}{s}{t}}\min_{N=1,\dots, M} \left( \onevartwo{\gamma - \pi_{(0,N)}\gamma}{s}{t} + \frac{\onevartwo{\pi_{(0,N)}\gamma}{s}{t}^{\ceil{M /N}}}{\ceil{M /N}!}\right).
    \end{align*}
    In case $\gamma$ is of a higher homogeneous level, say $\gamma = \pi_{N_0} \gamma$ then the above leads to
    \begin{align*}
         \Vert \Sig{\gamma}_{s,t} - \pi_{(0,M-1)}\Sig{\gamma}_{s,t}\Vert_1 \le e^{\onevartwo{\gamma}{s}{t}}\begin{dcases}
         \onevartwo{\gamma}{s}{t},& M\le N_0 \\
         \frac{\onevartwo{\gamma}{s}{t}^{\ceil{M /N_0}}}{\ceil{M /N_0}!},& M\ge N_0
         \end{dcases}.
    \end{align*}
    One easily observes for the case $d=1$ with non-negative derivative $\dot\gamma^{(N_0)} \ge 0$ that this estimate is asymptotically sharp for $M\to\infty$.
    
    Given specific knowledge of the decay rate $(\onevar{\pi_n\gamma}{t})_{n\ge 1}$,
    more refined estimates of the truncation error can be proven starting from the estimate in \ref{itm:bell_estimate}, i.e., starting from the estimate (with equality in the one-dimensional case with $\dot\gamma\ge 0$ componentwise)
    $$\Vert \mathcal{S}(\gamma) - \pi_{(0,M-1)}\mathcal{S}(\gamma) \Vert_1 \le \sum_{n\ge M} \frac{1}{n!} B_n\Big(1! \onevartwo{\pi_1\gamma}{s}{t},\, \dots,\, n! \onevartwo{\pi_n\gamma}{s}{t}\Big).$$
    The right-hand side is precisely the  remainder of a Taylor expansion of the function $\lambda \mapsto \exp(\sum_{n=1}^\infty \lambda^n\onevartwo{\pi_n\gamma}{s}{t})$ around $\lambda = 0$ evaluated at $\lambda =1$.
    In Lemma~\ref{lem:truncation_decay} we recall the known asymptotic behavior of this remainder for the cases of geometric decay and factorial decay.
    The main takeaway message for the later paper is that in both of these cases the error of the truncated development $\Vert \mathcal{S}(\gamma) - \mathcal{S}(\pi_{(0,M-1)}\gamma) \Vert_1$, which by \ref{itm:inner_truncation} is itself of factorial decay, respectively geometric decay, beats the direct truncation error $\Vert \mathcal{S}(\gamma) - \pi_{(0,M-1)}\mathcal{S}(\gamma) \Vert_1$ by order of magnitude.
    This will be a further motivation for the PDE method in Section~\ref{sec:truncated_pde} to calculate the expected signature kernel for L\'evy-processes in the presence of jumps.

\begin{proof}[Proof of Proposition~\ref{prop:free_dev_estimates}] 
Without loss of generality we prove the claims only for $s=0$. 
The general case simply follows by shifting the path $\gamma(\cdot + s)$.
\begin{enumerate}[label=\arabic*.)]
    \item Define $x_n(u) := \onevar{\pi_n\gamma}{u} =  \int_0^u |\pi_n \dot{\gamma}(r)|\dd{r}$ for all $u\le t$.
Then using 
$$\frac\partial{\partial{y_i}}B_n(y_1, \dots, y_n) = \binom{n}{i} B_{n-i}(y_1, \dots, y_{n-i}), \qquad n \ge i > 0,$$
the claim follows inductively with $B_0 \equiv 1$ and the induction step
\begin{align*}
    |\pi_{n} \mathcal{S}(\gamma)_u| 
    &\le  \sum_{k=1}^{n}\int_0^u |\pi_{n-k} \mathcal{S}(\gamma)_r| |\pi_{k}\dot \gamma(r)| \dd{r} \\
    &\le \sum_{k=1}^{n}\int_0^u \frac{1}{(n-k)!} B_{n-k}(1 x_1(r), \dots, (n-k)! x_{n-k}(r)) \dot{x}_{k}(r) \dd{r}\\
    &\quad= \frac{1}{n!}\sum_{k=1}^{n}\int_0^u \binom{n}{k} B_{n-k}(1 x_1(r), \dots, (n-k)! x_{n-k}(r)) k!\dot{x}_{k}(r) \dd{r} \\
    &\quad= \frac{1}{n!}B_n(1!x_1(u), \dots, n!x_n(u)).
\end{align*}
    \item The first inequality follows from summation of the bound in \ref{itm:bell_estimate} and the definition of the Bell polynomials. Alternatively, using the compatibility of the norm $|\cdot|$ we obtain for any $N\in\mathbb{N}$ the estimate
\begin{align*}
    \Vert \pi_{(0,N)}\mathcal{S}(\gamma)_t\Vert_1 &= \bigg\Vert 1 + \int_0^t \pi_{(0,N)}(\mathcal{S}(\gamma)_s\otimes {\dot\gamma}(s))\dd{s}\bigg\Vert_1 \\
    &\le1  + \sum_{n=0}^N\sum_{k=1}^n\int_0^t \vert\pi_{n-k}\mathcal{S}(\gamma)_s\vert \vert \pi_k\dot{\gamma}(s)\vert\d{s}, \\
    &\le1  + \int_0^t \Vert\pi_{(0,N)}\mathcal{S}(\gamma)_s\Vert_1 \Vert \pi_{(0,N)}{\dot\gamma}(s)\Vert_1\d{s},\quad t\ge0.
\end{align*}
the statement now follows from Gr\"onwall's inequality and then letting $N\to\infty$.
Now suppose that $\int_0^t\Vert{\dot\gamma}(s)\Vert_1\dd{s} <\infty$. 
By definition $\mathcal{S}(\gamma)$ is componentwise absolutely continuous.
Then applying Proposition~\ref{prop:young} and the just proven estimate we obtain
\begin{align*}
    \onevartwo{\mathcal{S}(\gamma)}{0}{t} &\le  \int_0^t \left\Vert \mathcal{S}(\gamma)_s\otimes {\dot\gamma}(s)\right\Vert_1\dd{s} \\
    &\le  \int_0^t \left\Vert \mathcal{S}(\gamma)_s\Vert_1 \Vert {\dot\gamma}(s)\right\Vert_1\dd{s} \\
    &\le  \int_0^t e^{\int_0^s \Vert {\dot\gamma}(t)\Vert_1\dd{t}} \Vert {\dot\gamma}(s) \Vert_1\dd{s}
    = e^{\onevartwo{\gamma}{0}{t}}-1.
\end{align*}
\item By definition of the free development we have for all $u\ge0$:
\begin{align*}
    \mathcal{S}(\gamma)_u - \mathcal{S}(\beta)_u = \int_0^u (\mathcal{S}(\gamma)_r - \mathcal{S}(\beta)_r)\otimes\dot{\beta}(r)\dd{r} + \int_0^u \mathcal{S}(\gamma)_r \otimes\big( \dot{\gamma}(r) - \dot{\beta}(r)\big)\dd{r}.
\end{align*}
Hence, by the triangle inequality, Proposition \eqref{prop:young}, Gr\"onwall's inequality (more precisely Lemma~\ref{lem:special_gronwal}) and \ref{itm:grnwld_estimate} from above we can estimate for all $u \le t$:
\begin{align*}
    \Vert\mathcal{S}(\gamma)_u - &\mathcal{S}(\beta)_u\Vert_1 \\
    \le&~ \int_0^u \Vert\mathcal{S}(\gamma)_r - \mathcal{S}(\beta)_r\Vert_1 \Vert \dot{\beta}(r)\Vert_1\dd{r} + \int_0^u \Vert \mathcal{S}(\gamma)_r \Vert_1 \Vert \dot{\gamma}(r) - \dot{\beta}(r)\Vert_1\dd{r}\\
    \le&~ \int_0^u e^{\onevartwo{\beta}{r}{u}}\Vert \mathcal{S}(\gamma)_r \Vert_1 \Vert \dot{\gamma}(r) - \dot{\beta}(r)\Vert_1\dd{r}\\
    \le&~ \int_0^u e^{\onevartwo{\beta}{r}{u} + \onevar{\gamma}{r}}  \Vert \dot{\gamma}(r) - \dot{\beta}(r)\Vert_1\dd{r}.
\end{align*}

\item Applying \ref{itm:lpschtz_estimate} for the case $\beta = \pi_{(0,N)}\gamma$ and using that $$\onevar{\gamma}{t} = \onevar{\pi_{(0,N)}\gamma}{t} + \onevar{\gamma - \pi_{(0,N)}\gamma}{t}$$
we have
\begin{align*}
    &\Vert\mathcal{S}(\gamma)_s - \mathcal{S}(\pi_{(0,N)}\gamma)_s\Vert_1  \\
    &\quad\le~ \int_0^s e^{\onevartwo{\pi_{(0,N)}\gamma}{u}{s} + \onevar{\gamma}{u}}  \Vert \dot{\gamma}(u) - \pi_{(0,N)}\dot\gamma(u)\Vert_1\dd{u}\\
    &\qquad=~ e^{\onevar{\pi_{(0,N)}\gamma}{s}}\int_0^s e^{\onevar{\gamma - \pi_{(0,N)}\gamma}{u}}  \Vert \dot{\gamma}(u) - \pi_{(0,N)}\dot\gamma(u)\Vert_1\dd{u}\\
    &\qquad=~ e^{\onevar{\pi_{(0,N)}\gamma}{s}} \left(e^{\onevar{\gamma - \pi_{(0,N)}\gamma}{s}}  -1\right)\\
     &\qquad=~ e^{\onevar{\gamma}{s}} \left(1 -e^{-\onevar{\gamma - \pi_{(0,N)}\gamma}{s}}\right)\\
     &\quad\qquad\le~ e^{\onevar{\gamma}{s}} \onevar{\gamma - \pi_{(0,N)}\gamma}{s}.
\end{align*}

\item Recall the following combinatorial expression for the Bell polynomials (see \cite[Section~3.3]{comtet1974advanced})
$$B_n(y_1, \dots, y_n) = n! \sum_{\ell \in L^n_n} \prod_{i=1}^n \frac{(y_i)^{\ell_i}}{(i!)^{\ell_i} \ell_i!},\qquad n > 0,$$
where for $m\le n$ we set $L_{n}^m := \{ \ell \in\mathbb{N}^m \;\vert\; \ell_1 + 2\ell_2 + \dots + m\ell_m = n\}$.
Using this and \ref{itm:bell_estimate} we have
\begin{align*}
    \Vert \mathcal{S}(\pi_{(0,N)}\gamma)_t - \pi_{(0,M-1)}\mathcal{S}&(\pi_{(0,N)}\gamma)_t\Vert_1 \\
    &\le\sum_{n=M}^\infty \frac{1}{n!} B_n(1! x_1(t), \dots, N!x_N(t), 0, \dots)  \\
    &\quad =\sum_{n=M}^\infty \;\sum_{\ell \in L^N_n} \;\prod_{i=1}^N \frac{(x_i(t))^{\ell_i}}{\ell_i!}.
\end{align*}
We further notice that
\begin{align*}
    \dot{\bigcup}_{n\ge M}L^N_n 
    =& \left\{\ell \in \N^N \;\Big\vert\; \ell_1 + 2\ell_2 + \dots + N\ell_N \ge M \right\} \\
    \subseteq& \left\{\ell \in \N^N \;\Big\vert\; \ell_1 + \ell_2 + \dots + \ell_N \ge \frac{M}{N} \right\} \\
    \subseteq&~\dot{\bigcup}_{k \ge \ceil{\frac{M}{N}}}\left\{\ell \in \N^N \;\Big\vert\; \ell_1 + \ell_2 + \dots + \ell_N = k \right\}.
\end{align*}
Hence, defining $K= \ceil{\frac{M}{N}} \ge 1$ and returning to the estimate, we have
\begin{align*}
    \Vert \mathcal{S}(\pi_{(0,N)}\gamma)_t - \pi_{(0,M-1)}\mathcal{S}(\pi_{(0,N)}\gamma)_t\Vert_1 
     & \le \sum_{k \ge K} \sum_{\ell_1 + \dots + \ell_N = k} \;\prod_{i=1}^N \frac{(x_i(t))^{\ell_i}}{\ell_i!} \\
     & = \sum_{k \ge K} \frac{1}{k!} (x_1(t) + \cdots + x_N(t))^k\\
     & = \frac{\onevar{\pi_{(0,N)}\gamma}{t}^K}{K!}\sum_{k \ge K} \frac{K!}{k!} \onevar{\pi_{(0,N)}\gamma}{t}^{k-K} \\
     &\le e^{\onevar{\pi_{(0,N)}\gamma}{t}}\frac{\onevar{\pi_{(0,N)}\gamma}{t}^K}{K!}.
\end{align*}
\end{enumerate}
\end{proof}

\subsection{Lie group developments of semimartingales}\label{sec:signatures_semimartingales}

We recall some considerations from \cite{hakim1986exponentielle, estrade1992exponentielle} for the case of free Lie groups.
Let $(\Omega, \F, (\F_t), \P)$ be a filtered probability space satisfying the usual conditions.
A measurable map $$\gamma: [0,\infty) \times \Omega \to \mathfrak{g}^N(V), \qquad (t,\omega)\mapsto \gamma_t(\omega),$$ is called a $\mathfrak{g}^N(V)$-valued semimartingale if $(\langle \mathfrak{u}_w, \gamma_t \rangle)_{t\ge0}$ defines a semimartingale for all $w\in\mathcal{I}$.
In what follows, we will always refer to the càdlàg version of a semimartingale without explicitly mentioning it, i.e., we assume $\lim_{s\downarrow t} (\gamma_s(\omega) - \gamma_t(\omega)) = 0$ for all $\omega\in\Omega$ and $t\ge0$. 
Further, we use the usual notation $\gamma_{t-} := \lim_{s\uparrow t,s<t} \gamma_s$ and $\Delta\gamma_t = \gamma_t - \gamma_{t-}$.
We consider the \emph{stochastic Lie-exponential} of $\gamma$, defined as the solution to the linear Marcus-type stochastic differential equation \cite{marcus1978modeling,marcus1981modeling, kurtz1995strtonovich, applebaum2009levy, friz2017general}
\begin{align}\label{eq:GN-development}
    \d X_t =  X_{t} \otimes_N \diamond\,\d{\gamma}_t, \quad t \ge 0, \quad X_0 = 1 \in G^N(V).
\end{align}
The embedding into the extended tensor algebra allows us to construct the solution to this equation similarly to the previous section, without any geometric arguments.
Specifically, we define the \textit{signature} $\Sig{\gamma}_{0,\cdot}$ as the full geometric development of $\gamma$ in $\TT$, i.e. the solution to
\begin{align}\label{eq:marcus_signature}
    \d{S_t} = S_{t} \otimes \diamond\,\d{\gamma}_t, \qquad t \ge0, \qquad S_0 = 1 \in \TT,
\end{align}
which in Itô-form reads
\begin{align*}
    S_t = 1 + \int_0^t S_{u-} \otimes \d{\gamma}_u + \frac{1}{2}\int_0^t S_{u} \otimes \d{\langle\gamma^c\rangle}_u + \sum_{0 < u \le t}S_{u-}(e^{\Delta \gamma_u} - 1 - \Delta \gamma_u), \qquad t \ge0.
\end{align*}
Once again, the graded structure of $\TT$ allows us to solve the above equation by iterated integration.
The fact that $\Sig{\gamma}$ takes values in $\mathcal{G}$ can be seen either by the definition of the Marcus integration (see \cite{kurtz1995strtonovich,friz2017general}) or by formulating the Magnus expansion of the tensor-logarithm of $\Sig{\gamma}$, which can be done using only Itô-calculus (see \cite{FHT22}).
The truncated signature $X := \pi_{(0,N)}\Sig{\gamma}_{0,\cdot}$ then solves the equation \eqref{eq:GN-development}.
Interestingly, there is a corresponding \textit{stochastic (Lie-)logarithm}:
\begin{proposition}[\cite{hakim1986exponentielle, estrade1992exponentielle}]\label{prop:lie_logarithm}Let $X$ be a $G^N(V)$-valued semimartingale, i.e., for any smooth function $f: G^N(V) \to \R$ the process $f(X)$ is a semimartingale.
Then there exists a unique $\mathfrak{g}^N(V)$-valued semimartingale such that $X := \pi_{(0,N)}\Sig{\gamma}_{0,\cdot}$.
\end{proposition}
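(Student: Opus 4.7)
The plan is to invert the Marcus-type SDE \eqref{eq:GN-development} pathwise and recover $\gamma$ from $X$, exploiting that on the nilpotent free group the tensor exponential $\exp_\otimes\colon \mathfrak{g}^N(V)\to G^N(V)$ is a global diffeomorphism with inverse $\log_\otimes$.

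First I would settle uniqueness. Reading \eqref{eq:GN-development} in Marcus--It\^o form, the jumps of $X$ and $\gamma$ are linked by $X_t = X_{t-}\otimes_N \exp_\otimes(\Delta\gamma_t)$, so that
\[
\Delta\gamma_t \;=\; \log_\otimes\!\bigl(X_{t-}^{-1}\otimes_N X_t\bigr)\,\in\,\mathfrak{g}^N(V)
\]
is forced at every jump time. The continuous part $\gamma^c$ is then pinned down by the Stratonovich-type linear equation $\diamond\, d\gamma^c_t = X_{t-}^{-1}\otimes_N \diamond dX^c_t$, whose right-hand side is well-defined by invertibility of $X_{t-}$ in the group and determines $\gamma^c$ uniquely.

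For existence, I would reverse these formulas. Define the jump part $\gamma^J$ by compensating the sum $\sum_{0<s\le t} \log_\otimes(X_{s-}^{-1}\otimes_N X_s)$: each summand lies in $\mathfrak{g}^N(V)$ because $\log_\otimes$ maps $G^N(V)$ into $\mathfrak{g}^N(V)$, and since $\log_\otimes(1+y) = y + O(|y|^2)$ while $\sum_{s\le t}|\Delta X_s|^2 < \infty$ for the c\`adl\`ag semimartingale $X$, the small-jump contribution is made sense of as a stochastic integral against the compensated jump measure of $X$, producing a purely discontinuous $\mathfrak{g}^N(V)$-valued semimartingale. Next I would set $\gamma^c_t := \int_0^t X_{s-}^{-1}\otimes_N \diamond dX^c_s$ in the Stratonovich sense. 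Putting $\gamma := \gamma^c + \gamma^J$, an application of the Marcus--It\^o formula to the smooth map $\exp_\otimes$ would verify that $X$ solves \eqref{eq:GN-development} driven by $\gamma$, hence $X = \pi_{(0,N)}\Sig{\gamma}_{0,\cdot}$.

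The main obstacle, and the Lie-theoretic heart of the statement, is to check that $\gamma^c$ takes values in $\mathfrak{g}^N(V)$ rather than merely in the ambient $T^N(V)$. The clean way to see this is geometric: since $X$ is by hypothesis $G^N(V)$-valued and $G^N(V)$ is a closed embedded submanifold of $T^N(V)$ whose tangent space at the identity is $\mathfrak{g}^N(V)$, the Stratonovich logarithmic derivative $X^{-1}\otimes_N \diamond dX$ must remain tangent to the group at the identity. I would make this rigorous either by localizing on charts around the identity --- on each piece $[\tau_k,\tau_{k+1})$ writing $X_t = X_{\tau_k}\otimes_N \exp_\otimes(Z^k_t)$ for a $\mathfrak{g}^N(V)$-valued semimartingale $Z^k$ obtained by applying It\^o's formula to the smooth map $\log_\otimes$ and piecing the increments together --- or, more directly, by invoking the manifold-valued stochastic calculus for Lie-group-valued semimartingales developed in \cite{hakim1986exponentielle, estrade1992exponentielle}, from which both the construction and the Lie-algebra constraint follow at once.
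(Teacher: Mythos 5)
The paper does not prove this proposition at all: it is stated with an attribution to \cite{hakim1986exponentielle, estrade1992exponentielle} and the surrounding text simply quotes it as the \emph{stochastic Lie logarithm}, so there is no in-paper argument to compare against. Your sketch is a reasonable outline of the classical argument from those references — invert the Marcus SDE, extract the jumps via $\log_\otimes$, extract the continuous part via a Stratonovich logarithmic derivative, and appeal to the tangency of $\mathfrak{g}^N(V)$ to $G^N(V)$ at the identity — and you candidly defer the rigorous manifold-valued stochastic calculus to the cited papers, which is in the same spirit as the paper's own treatment.

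Two points in the sketch are imprecise and would need care if you wanted a self-contained argument. First, you write that ``the Stratonovich logarithmic derivative $X^{-1}\otimes_N\diamond dX$ must remain tangent to the group at the identity,'' but the Marcus differential $\diamond dX$ also carries the jump contributions, and those produce group increments rather than Lie-algebra elements; the tangency argument only applies to the continuous part, and the clean way to formulate it is via the Marcus change-of-variable formula: for any smooth $f$ vanishing on $G^N(V)$, $0 = d f(X_t) = \langle \nabla f(X_{t-}), \circ\, dX_t^c\rangle + \big(f(X_t)-f(X_{t-})\big)$, and both outer terms vanish, forcing $\circ\, dX_t^c$ into the left-translated tangent space and hence $X_{t-}^{-1}\otimes_N\circ\, dX_t^c \in \mathfrak{g}^N(V)$. (Note also that the continuous part $X^c$ of the semimartingale decomposition does not itself take values in $G^N(V)$, so your localized-chart version $X_t = X_{\tau_k}\otimes_N\exp_\otimes(Z_t^k)$ is the safer route.) Second, the statement that the small-jump contribution to $\gamma^J$ ``is made sense of as a stochastic integral against the compensated jump measure of $X$'' is correct in spirit but glosses over the fact that compensating in $\gamma$-jumps is not the same as compensating in $X$-jumps; one should compensate $\log_\otimes(X_{s-}^{-1}\otimes_N X_s)$ against its own predictable compensator, with the quadratic remainder $\log_\otimes(1+y)-y = O(|y|^2)$ ensuring absolute summability of the difference. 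Finally, both the proposition as stated and your proof implicitly assume $X_0 = 1$, since $\Sig{\gamma}_{0,0}=1$; this should be made explicit for the uniqueness claim ($\gamma_0 = 0$) to close.
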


\subsection{Inhomogeneous L\'evy processes in free Lie groups}\label{sec:inhom_levy}

While here we specialize in the free step-$N$ nilpotent Lie group (as previously studied in the homogeneous case in \cite{friz2017general} and \cite{chevyref2018random}), let us mention that the definition of L\'evy processes in general Lie groups is standard and their properties are well understood:
the interested reader may refer to \cite{liao2004levy} for an introduction to the topic.
The terminology of inhomogeneous L\'evy processes is less standard. For the purposes of this exposition, we introduce a definition below. For the reader's convenience, we gradually transition from the definition for processes with independent increments to the definition of L\'evy processes.

\begin{definition}
Let $(\Omega, \F, (\F_t), \P)$ be a filtered probability space and let $(G, \odot)$ be a Lie group
with identity element $1_G$. An adapted process $X: [0,\infty) \times \Omega \to G$, $(t,\omega) \mapsto X_t(\omega)$, is called a \emph{process with independent increments} ($(G,\odot)$-PII) if
    \begin{enumerate}[label=(\roman*)]
        \item For all $t\ge s\ge0$ it holds that $X_{s,t}:=(X_s)^{-1} \odot X_t$ is independent of $\F_s$.
    \end{enumerate}
    The process $X$ is called \emph{semimartingale $(G,\odot)$-PII} if additionally
    \begin{enumerate}[resume,label=(\roman*)]
        \item $X$ is càdlàg, i.e., $\lim_{s\downarrow t} X_{s} = X_t$ and $\lim_{s\uparrow t} X_{t} \in G$ for $t\ge0$,
        \item $f(X)$ is a semimartingale for any smooth $f:G \to \R$.
    \end{enumerate}
    The process $X$ is called an \emph{inhomogeneous L\'evy process in $(G,\odot)$} if additionally
    \begin{enumerate}[resume,label=(\roman*)]
        \item $X_0 = 1_G$,
        \item for all $t> 0$ it holds $\lim_{\varepsilon \downarrow 0}X_{t-\varepsilon,t} = 1_G$ in probability.
    \end{enumerate}
    Finally, $X$ is called a \emph{L\'evy process in $(G,\odot)$} if additionally
    \begin{enumerate}[resume,label=(\roman*)]
        \item $X$ has stationary increments.
    \end{enumerate}
\end{definition}

theory developed by G.~A.~Hunt \cite{hunt1956semi} provides a characterization of L\'evy processes in Lie groups via their generators, which are expressed in terms of left-invariant vector fields on the group.
Although extensions to the inhomogeneous case are available in \cite{liao2014inhomogeneous}, we instead choose to work with the stochastic logarithm, which allows us to use classical semimartingale characteristics in finite-dimensional vector spaces.
As a direct consequence of Proposition~\ref{prop:lie_logarithm} we get
\begin{corollary}\label{cor:anitdev_levy} A process $X$ is an inhomogeneous L\'evy process in $(G^N(V), \otimes_N)$ if and only if it is the development of an inhomogeneous L\'evy-process $\lvyalg$ in $(\mathfrak{g}^N(V), +)$, i.e., if $X = \pi_{(0,N)}\Sig{\lvyalg}_{0,\cdot}$.
\end{corollary}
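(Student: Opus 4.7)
The plan is to transport the defining properties of an inhomogeneous L\'evy process through the bijection between $\mathfrak{g}^N(V)$-valued semimartingales $\lvyalg$ and $G^N(V)$-valued semimartingales $X = \pi_{(0,N)}\Sig{\lvyalg}_{0,\cdot}$ provided by Proposition~\ref{prop:lie_logarithm}. That bijection immediately matches the semimartingale/càdlàg conditions (ii)--(iii) on either side, and reduces (iv) to the equivalence $\lvyalg_0 = 0 \Leftrightarrow X_0 = 1_G$, which is built into the definition of $\Sigop$. The non-trivial content therefore lies in checking the equivalence of the independent-increments property (i) and the stochastic continuity (v).

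The key tool I would first establish is a Marcus flow identity: for fixed $s\ge 0$, the increment $X_{s,t} := X_s^{-1}\otimes_N X_t$, $t\ge s$, is the unique $G^N(V)$-valued solution of
\begin{equation*}
  \d X_{s,t} = X_{s,t-}\otimes_N \diamond\,\d(\lvyalg_t - \lvyalg_s), \qquad t\ge s, \qquad X_{s,s} = 1_G.
\end{equation*}
This I would verify either by applying the Marcus chain rule directly, or by rewriting \eqref{eq:marcus_signature} in Itô form and checking that $X_s^{-1}\otimes_N X$ satisfies the desired equation. The payoff is that $X_{s,t}$ is constructed pathwise from $(\lvyalg_u - \lvyalg_s)_{u\in[s,t]}$; conversely, applying Proposition~\ref{prop:lie_logarithm} to the time-shifted process $t\mapsto X_{s,s+t}$ recovers $t \mapsto \lvyalg_{s+t} - \lvyalg_s$ as its stochastic Lie-logarithm.

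Given this factorization, condition (i) transfers in both directions without further work: $X_{s,t}$ is independent of $\F_s$ for every $t\ge s$ if and only if $\lvyalg_t - \lvyalg_s$ is. For (v), viewing $X_{t-\varepsilon,t}$ as the solution of the above Marcus SDE driven by $\lvyalg_\cdot - \lvyalg_{t-\varepsilon}$ on the shrinking interval $[t-\varepsilon,t]$, the condition $X_{t-\varepsilon,t}\to 1_G$ in probability becomes equivalent to $\lvyalg_t - \lvyalg_{t-\varepsilon}\to 0$ in probability: the direction ``$\Leftarrow$'' uses continuity of the Marcus solution map at the identity (the estimates of Proposition~\ref{prop:free_dev_estimates} handle the continuous case and standard Marcus-SDE stability covers the jumps, cf.\ \cite{friz2017general}), while ``$\Rightarrow$'' uses continuity of the stochastic Lie-logarithm at $1_G$ from Proposition~\ref{prop:lie_logarithm}.

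The hardest part will be the continuity-in-probability content of (v): one must marshal enough stability for Marcus SDEs with jumps to conclude that a driver close to a constant on a small interval produces a flow close to the identity, and vice versa. All the remaining equivalences are essentially algebraic consequences of the bijection in Proposition~\ref{prop:lie_logarithm} combined with the pathwise flow identity above.
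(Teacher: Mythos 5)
The paper gives no explicit proof, treating the corollary as a direct consequence of Proposition~\ref{prop:lie_logarithm}, so there is no written argument to compare against; your blueprint is, in outline, the one the authors rely on, and the identification of the flow identity
\begin{equation*}
\d X_{s,t} = X_{s,t-}\otimes_N \diamond\,\d(\lvyalg_t - \lvyalg_s), \qquad X_{s,s}=1_G,
\end{equation*}
as the key ingredient for transferring the independent-increments property (i) is exactly right. Two remarks, one small and one more substantive. First, ``constructed pathwise'' is a slight overstatement: the Marcus SDE involves stochastic integration, so the solution is not a pathwise functional of the driver; what is true, and what suffices, is that the hierarchical tensor-level structure makes $X_{s,\cdot}$ measurable with respect to $\sigma(\lvyalg_u - \lvyalg_s : u \ge s)$ (and conversely, the Lie-logarithm of $X_{s,\cdot}$ is measurable with respect to $\sigma(X_{s,u}:u\ge s)$), which is exactly what is needed to transfer independence from $\mathcal{F}_s$. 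Second, you have overestimated the difficulty of condition (v). Since both $X$ and $\lvyalg$ are c\`adl\`ag, the limits $X_{t-\varepsilon,t}\to X_{t-}^{-1}\otimes_N X_t$ and $\lvyalg_t - \lvyalg_{t-\varepsilon}\to\Delta\lvyalg_t$ already hold \emph{pathwise} as $\varepsilon\downarrow 0$, so condition (v) is simply the statement that $\Delta X_t=0$ a.s.\ (resp.\ $\Delta\lvyalg_t=0$ a.s.) at each fixed $t>0$. The Marcus jump relation $X_t = X_{t-}\otimes_N\exp_{\otimes_N}(\Delta\lvyalg_t)$, together with invertibility of $X_{t-}$ in $G^N(V)$ and injectivity of $\exp_{\otimes_N}$, gives $\Delta X_t=0\Leftrightarrow\Delta\lvyalg_t=0$ immediately; no stability theory for Marcus SDEs and no appeal to Proposition~\ref{prop:free_dev_estimates} is required. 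What you flag as ``the hardest part'' is in fact the easiest step once the c\`adl\`ag structure is exploited.
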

\begin{remark}
Trivially, $(\mathfrak{g}^N(V), +)$ is a Lie group with identity $0\in\mathfrak{g}^N(V)$. The above corollary could be stated in a general Lie setting \cite{applebaum1993levy}.
\end{remark}

Classical results for processes with independent increments, now in finite-dimensional vector spaces \cite[Section~II.4.c]{jacod2003limit}, imply that the law of an inhomogeneous L\'evy-process $\lvyalg$ in $(\mathfrak{g}^N(V), +)$ is determined by a \emph{characteristic triplet}.
For the state-space $\frak{g}^N(V)$ the triplet $(B, \Sigma, \nu)$ is given by a continuous function $B: [0,\infty) \to \mathfrak{g}^N(V)$, a continuous function $\Sigma:[0,\infty) \to \mathfrak{S}^N_+(V) \subset T^{2N}(V)$ where $$\mathfrak{S}^N_+(V) :=\bigg\{ \sum \mathop{}_{v,w\in \mathcal{I}_N} \mathfrak{u}_v \otimes \mathfrak{u}_w \Sigma^{v,w} \;\Big\vert\; (\Sigma^{w,v})\in\R^{|\mathcal{I}_N|\times |\mathcal{I}_N|} \text{ positive semidefinite}\bigg\},$$
such that $\Sigma_0 = 0$, $\Sigma_t - \Sigma_s \in \mathfrak{S}^N_+(V)$ for all $0\le s \le t$, and a measure $\nu$ on $\mathfrak{g}^N(V) \times [0,\infty)$ satisfying
$$\int_{0}^t \int_{\mathfrak{g}^N(V)}(\Norm{x}^2 \wedge 1)\nu(\d{x}, \d{t}) < \infty, \qquad t \ge 0.$$
In this work, we only consider inhomogeneous L\'evy processes $\lvyalg$ that have a \emph{differential triplet} %
$(b, a, K)$, i.e., such that
$$B = \int_0^\cdot b(t) \d{t}, \quad \Sigma = \int_0^\cdot a(t) \d{t}, \quad \nu(\d{x}, \d{t}) = K_t(\d{x})\d{t},$$
for integrable functions $b: [0,\infty) \to \mathfrak{g}^N(V)$, $a: [0,\infty) \to \mathfrak{S}^N(V)$ and a family of L\'evy measures $(K_t)_{t\ge0}$ on $\mathfrak{g}^N(V)$.
\begin{notation}\label{not:inhom_levy}
We write $\lvyalg \sim \mathscr{L}^N(b, a, K)$, if there exists some filtered probability space $(\Omega, \F, (\F_t), \P)$ satisfying the usual conditions such that $\lvyalg: [0,\infty) \times \Omega \to \mathfrak{g}^N(V)$ is an inhomogeneous L\'evy process in $(\mathfrak{g}^N(V), +)$ with differential triplet $(b, a, K)$.
\end{notation}

In order to give a more direct definition, we use the martingale characterization from \cite{stroock1975diffusion}:
$\lvyalg \sim \mathscr{L}^N(b, a, K)$
if and only if $\lvyalg_0 = 0$ and for all $f\in C^2_c(\mathfrak{g}^N(V);\R)$,
\begin{align}\label{eq:mrtgl_problem}
    M^f_t := f(\lvyalg_t) - \int_0^t \mathcal{L}_sf(\lvyalg_s)\dd{s}, \qquad t\ge0,
\end{align}
defines a martingale, where
\begin{equation}\label{eq:generator}
    \begin{split}
        \mathcal{L}_tf(x) :=&~ \langle b(t), \nabla f(x)\rangle + \frac12 \langle a(t), \nabla^2 f(x)\rangle \\&~+ \int_{\mathfrak{g}^N(V)}\big(f(x+y) - f(x) -\langle \nabla f(x), y\rangle 1_{|y|\le 1}\big)K_t(\dd{y}),
    \end{split}
\end{equation}
for all $x\in \mathfrak{g}^N(V)$ and $t\ge0$ with $\nabla f$ and $\nabla^2 f$ denoting the gradient and Hessian of $f$ in the coordinates $(\mathfrak{u}_w)_{w\in\mathcal{I}}$.
Specifically, 
$$\nabla f(x) = \sum_{w\in\mathcal{I}^N}\mathfrak{u}_w \frac{\dd}{\dd h}f(x+ h\mathfrak{u}_w) \in \mathfrak{g}^N(V), \qquad \nabla^2 f(x) = \sum_{v\in\mathcal{I}^N}\mathfrak{u}_v \otimes \frac{\dd}{\dd h} \nabla f(x + h\mathfrak{u}_v) \in \mathfrak{S}^N(V),$$
where $\mathfrak{S}^N(V)\subset T^{2N}(V)$ is defined as $\mathfrak{S}^N_+(V)$ above, but requiring only symmetry.

Note that the characteristic triplet depends on the choice of \emph{truncation function} \cite[Definition~II.2.3]{jacod2003limit} that separates small and large jumps, which we have implicitly fixed to the standard $x\mapsto x \indic{\Norm{x} \le 1}$.
When dilating the process, the truncation function only influences the scaling of the drift component.

\begin{lemma}\label{rem:triplet_scaling}
     Let $\lvyalg \sim\mathscr{L}^N(b, a, K)$ and $\lambda > 0$. Then $\delta_\lambda \lvyalg \sim\mathscr{L}^N(b^{\lambda}, a^{\lambda}, K^{\lambda})$ where $K^{\lambda}(\cdot) = K(\delta_{\lambda^{-1}}(\cdot))$, $a^{\lambda} = \delta_{\lambda} a$ and $$b^{\lambda} = \delta_\lambda \left(b +\int_0^\cdot \int_{\mathfrak{g}^N(V)} (\indic{\Norm{x}\le 1} - \indic{\Norm{\delta_\lambda x}\le 1})K_t(dx)\dd{t}\right).$$
\end{lemma}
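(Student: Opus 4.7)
The approach is the martingale problem characterization \eqref{eq:mrtgl_problem}--\eqref{eq:generator}. Since $\delta_\lambda$ is a continuous linear bijection on the finite-dimensional space $\mathfrak{g}^N(V)$, the process $\delta_\lambda \lvyalg$ automatically inherits c\`adl\`ag paths, independent increments, stochastic continuity, and the initial condition from $\lvyalg$; the content of the lemma is therefore only the identification of its differential triplet. To that end, I would fix $f\in C^2_c(\mathfrak{g}^N(V);\R)$, set $g := f\circ\delta_\lambda$ (again in $C^2_c$, since $\delta_\lambda$ is a linear bijection), and apply the given martingale problem to $g$: the process $f(\delta_\lambda \lvyalg_t) - \int_0^t \mathcal{L}_s g(\lvyalg_s)\,\dd s$ is then a martingale. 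It thus suffices to verify the pointwise identity
\[
\mathcal{L}_s g(x) \;=\; \tilde{\mathcal{L}}_s f(\delta_\lambda x), \qquad x\in\mathfrak{g}^N(V),\; s\ge 0,
\]
where $\tilde{\mathcal{L}}_s$ is the generator associated to the claimed triplet $(b^\lambda, a^\lambda, K^\lambda)$.

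The drift and diffusion terms are matched by a direct chain-rule computation. Because the Lie basis is graded, $\delta_\lambda \mathfrak{u}_w = \lambda^{|w|}\mathfrak{u}_w$, so $\nabla g(x) = \delta_\lambda \nabla f(\delta_\lambda x)$ and $\nabla^2 g(x) = (\delta_\lambda\otimes\delta_\lambda)\nabla^2 f(\delta_\lambda x)$; using that $\delta_\lambda$ is self-adjoint under $\langle\cdot,\cdot\rangle$ and that $(\delta_\lambda\otimes\delta_\lambda)|_{T^{2N}(V)}$ coincides with $\delta_\lambda|_{T^{2N}(V)}$, one transfers $\delta_\lambda$ across the pairing to obtain $\langle \delta_\lambda b(s), \nabla f(\delta_\lambda x)\rangle$ and $\tfrac12\langle \delta_\lambda a(s), \nabla^2 f(\delta_\lambda x)\rangle$, which are precisely the drift and diffusion parts of $\tilde{\mathcal{L}}_s f(\delta_\lambda x)$ with $a^\lambda = \delta_\lambda a$. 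For the jump term, the substitution $z=\delta_\lambda y$ pushes $K_s$ forward to $K^\lambda_s = K_s\circ\delta_{\lambda^{-1}}$ but replaces the truncation by $\indic{\Norm{\delta_{\lambda^{-1}} z}\le 1}$. Splitting this against the standard $\indic{\Norm{z}\le 1}$ isolates a linear-in-$\nabla f$ compensation which, after reverting to the $y$-variable, contributes $\langle\delta_\lambda c(s),\nabla f(\delta_\lambda x)\rangle$ with $c(s) := \int y(\indic{\Norm{\delta_\lambda y}\le 1}-\indic{\Norm{y}\le 1})K_s(\dd y)$. Adding $\delta_\lambda c(s)$ to $\delta_\lambda b(s)$ and integrating in $s$ yields the expression for $b^\lambda$ in the statement.

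The main technical point is the well-definedness of the correction $c(s)$. Because the graded norm $\Norm{\cdot}$ on $T^N(V)$ rescales the level-$k$ component by $\lambda^k$ under $\delta_\lambda$, the symmetric difference $\{\Norm{y}\le 1\}\triangle\{\Norm{\delta_\lambda y}\le 1\}$ is contained in the bounded annulus $\{\min(1,\lambda^{-N})\le \Norm{y}\le \max(1,\lambda^{-N})\}$, which is bounded away from the origin. On this set the integrand $y$ is uniformly bounded, so finiteness of $c(s)$ follows from the defining condition $\int(\Norm{y}^2\wedge 1)\,K_s(\dd y)<\infty$ of a L\'evy measure; the same argument verifies that $K^\lambda_s$ is itself a L\'evy measure on $\mathfrak{g}^N(V)$. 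With the triplet identified, the martingale problem for $\delta_\lambda\lvyalg$ holds against every $f\in C^2_c(\mathfrak{g}^N(V);\R)$, giving $\delta_\lambda\lvyalg\sim\mathscr{L}^N(b^\lambda,a^\lambda,K^\lambda)$.
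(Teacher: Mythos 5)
Your proof is correct and follows essentially the same route as the paper's: apply the martingale characterization to $f\circ\delta_\lambda$, use the graded chain rule $\nabla(f\circ\delta_\lambda)(x)=\delta_\lambda\nabla f(\delta_\lambda x)$ (and its analogue for the Hessian, via $\delta_\lambda$ being an algebra morphism), and absorb the truncation mismatch arising from the change of variables in the jump integral into the first-order part. Your explicit verification that the drift correction is finite (the symmetric difference of the two truncation sets sits in an annulus bounded away from the origin, so the L\'evy-measure condition suffices) is a detail the paper leaves implicit, and your sign $(\indic{\Norm{\delta_\lambda y}\le 1}-\indic{\Norm{y}\le 1})$ together with the factor $y$ in the integrand is indeed what the computation forces.
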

\begin{remark}
    If $\lvyalg \sim \mathscr{L}^1(b,a,K)$, i.e., $\lvyalg$ is a $d$-dimensional inhomogeneous L\'evy process, then $\delta_\lambda \lvyalg = \lambda \lvyalg$ and we recover the classical scaling with $a^\lambda = \lambda^2 a$.
\end{remark}
\begin{proof}
For any $f\in C^2_c(\mathfrak{g}^N(V);\R)$ we set $f^\lambda(x) := f(\delta_\lambda x)$.
Then it holds $$\nabla f^\lambda(x) = \sum_{w\in\mathcal{I}^N}\mathfrak{u}_w \frac{\dd}{\dd h}f(\delta_\lambda x+ h\delta_\lambda\mathfrak{u}_w) = \sum_{w\in\mathcal{I}^N}\mathfrak{u}_w \lambda^{|w|}\frac{\dd}{\dd h}f(\delta_\lambda x+ h\mathfrak{u}_w) =   \delta_\lambda \nabla f(\delta_\lambda x),$$
and one similarly verifies, $\delta_\lambda$ is an algebra morphism, that 
\begin{align*}
    \nabla^2 f^\lambda (x) %
    =&\sum_{w\in\mathcal{I}^N}\delta_\lambda\mathfrak{u}_v \otimes  \delta_\lambda\left(\frac{\dd}{\dd h} \nabla f(\delta_\lambda x + h \mathfrak{u}_v)\right)
    =~\delta_\lambda \nabla^2 f(\delta_\lambda x).
\end{align*}
Thus, defining the scaled operator
\begin{equation}\label{eq:modified_generator}
\begin{split}
    \mathcal{L}^\lambda_t f(x) :=&~ \langle \delta_\lambda b(t), \nabla f(x)\rangle + \frac12 \langle \delta_\lambda a(t), \nabla^2 f(x)\rangle \\&~+ \int_{\mathfrak{g}^N(V)}\big(f(x+\delta_{\lambda}y) - f(x) -\langle \nabla f(x), \delta_{\lambda} y\rangle 1_{|y|\le 1}\big)K_t(\dd{y}),
\end{split}
\end{equation}
we readily see that
\begin{align*}
    f(\delta_\lambda \lvyalg) - \int_0^t \mathcal{L}^{\lambda}_s f(\delta_\lambda \lvyalg_s) \dd{s}= f^\lambda(\lvyalg) - \int_0^t \mathcal{L}_s f^\lambda(\lvyalg_s) \dd{s}, \qquad t\ge 0.
\end{align*}
It follows from the martingale characterization of $\lvyalg$ that the above defines a martingale for all $f\in C^2(\mathfrak{g}^N(V); \R)$.
To conclude it thus suffices to argue that $\mathcal{L}^\lambda$ is of the form \eqref{eq:generator} with modified characteristics $(b^\lambda, a^\lambda, K^\lambda)$. 
This is immediately visible for the second-order part. 
For the remaining parts it simply follows by transforming the integration variable in \eqref{eq:modified_generator} with $\delta_{\lambda}$, and moving the displaced indicator terms into the first order part. 
\end{proof}

\section{Expected signature kernels of inhomogeneous L\'evy processes}

This section contains our main result, which is an extended PDE system for computing the expected signature kernel of (group-valued) inhomogeneous L\'evy processes. 
The most general version of this system, stated in Theorem~\ref{thm:main}, takes the form of a linear hyperbolic PDE (Goursat-type) coupled with two linear ODEs.
In the presence of a nontrivial L\'evy measure, the system is inherently infinite-dimensional, yet well-posed.

We further present, in Theorem~\ref{thm:truncated}, a truncated (finite-dimensional) system that, on the one hand, approximates the infinite-dimensional system in the presence of jumps—with explicit control of the truncation error—and, on the other hand, allows exact computation of the expected signature kernel for continuous inhomogeneous L\'evy processes.
In the deterministic case, we recover the system from \cite{lemercier2024high} for the signature kernel of smooth rough paths.

Finally, in the special case of $d$-dimensional, continuous inhomogeneous L\'evy process, possibly enhanced with a deterministic area term, Corollary~\ref{cor:level2} expresses the system in an accessible form using only matrix multiplication.

We begin by recalling known results on the expected signature of inhomogeneous L\'evy processes and provide a sufficient moment condition on the L\'evy measure for the expected signature kernel to be well defined.

\subsection{Expected signatures and a moment condition}\label{sec:moment_condition}%
Recall Notation~\ref{not:inhom_levy} for $\mathfrak{g}^N(V)$-valued inhomogeneous L\'evy processes.
The expected signature of such $\lvyalg\sim\mathscr{L}^N(b, a, K)$ has already been fully characterized in the literature and is summarized by the following
\begin{proposition}[\cite{FHT22}]\label{prop:levy_esig}
    Let $\lvyalg \sim \mathscr{L}^N(b, a, K)$ and assume the moment condition
    \begin{align*}
        \int_0^t \int_{\mathfrak{g}^N(V)} \indic{\Norm{x} > 1}\Norm{x}^p K_t(\d{x}) \d{t} < \infty, \qquad t \ge 0, \quad p>1,
    \end{align*}
    holds. 
    Then the expected signature $\esig(t):= \E[\Sig{\lvyalg}_{0,t}] := \sum_{w\in\mathcal{W}}e_w\E[\Sig{\lvyalg}^{w}_{0,t}] \in \TT_1$ is well-defined and the unique solution to
    \begin{align}\label{eq:levy_esig}
        \esig(t) = 1 + \int_0^t \esig(u)\otimes\mathfrak{y}(u)\d{u}, \qquad t \ge 0,
    \end{align}
    where the \emph{characteristic velocity} in $\TT_0$ is defined by
    \begin{align}\label{def:frak_eta}
        \mathfrak{y}(t) := b(t) + \frac{1}{2}a(t) + \int_{\mathfrak{g}^N(V)}(\exp_\otimes(x) - 1 - x \indic{|x|\le 1}) K_t(\d{x}).
    \end{align}
\end{proposition}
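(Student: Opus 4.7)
The plan is to derive the integral equation for $\esig$ by applying the Itô form of the Marcus signature SDE to $\Sig{\lvyalg}_{0,\cdot}$ and taking expectations; the core difficulty is verifying that the moment hypothesis on $K$ guarantees integrability at every tensor level so that the exchange of expectation and integration is justified.

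First I would invoke the Itô form of~\eqref{eq:marcus_signature},
\begin{align*}
S_t = 1 + \int_0^t S_{u-}\otimes \d{\lvyalg_u} + \tfrac{1}{2}\int_0^t S_u \otimes \d{\langle\lvyalg^c\rangle_u} + \sum_{0<u\le t} S_{u-}\otimes(e^{\Delta \lvyalg_u}-1-\Delta\lvyalg_u),
\end{align*}
and plug in the Lévy-Itô decomposition of $\lvyalg\sim\mathscr{L}^N(b,a,K)$, namely $\d{\lvyalg_u} = b(u)\,\d{u} + \d{M_u} + \int_{|x|>1}x\,N(\d{x},\d{u})$, where $M$ comprises the Gaussian martingale and the compensated small-jump martingale. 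Rewriting the exponential jump correction as a Poisson integral and compensating it, taking expectations kills all martingale contributions. The compensator for the large-jump drift, $\int_{|x|>1}x\,K_u(\d{x})$, then combines with the compensator of the exponential correction, $\int(e^x-1-x)K_u(\d{x})$, to reassemble the truncated integral $\int(e^x - 1 - x\indic{|x|\le 1})K_u(\d{x})$ of~\eqref{def:frak_eta}; adding the drift $b(u)$ and the quadratic-variation term $\tfrac{1}{2}a(u)$ identifies the bracket as $\mathfrak{y}(u)$ and yields~\eqref{eq:levy_esig}.

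The main obstacle is establishing $\E[|\pi_n S_t|]<\infty$ for every $n\in\N$, which is needed both to define $\esig$ as an element of $\TT_1$ and to justify the vanishing of the martingale terms and the interchange of expectation with the componentwise integrals. I would proceed inductively on the tensor level: the SDE for $\pi_n S$ is linear in $\pi_{<n} S$ with source terms expressible as iterated stochastic integrals against $\lvyalg$, $\langle\lvyalg^c\rangle$, and the exponential jump measure. The moment hypothesis $\int\indic{|x|>1}|x|^p K_u(\d{x})\,\d{u}<\infty$ bounds the polynomial moments of the compound-Poisson part of $\lvyalg$, while Burkholder–Davis–Gundy (and, for jump martingales, Kunita's inequality) controls the Gaussian and small-jump martingale pieces. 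Chaining these with a Gronwall-type argument in the spirit of Proposition~\ref{prop:free_dev_estimates} propagates $L^1$-integrability through all tensor levels. A cleaner alternative that I would likely prefer is to invoke the Magnus expansion of $\log_\otimes S_t$ developed in~\cite{FHT22}, which represents $\log_\otimes S_t$ as a convergent series of iterated stochastic integrals whose expectations can be controlled term-by-term under the moment hypothesis, yielding both well-definedness and, after tensor exponentiation, the drift velocity $\mathfrak{y}$ directly.

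Finally, uniqueness in~\eqref{eq:levy_esig} is immediate: since $\mathfrak{y}$ is componentwise integrable under the moment hypothesis, the hierarchical word-by-word solve for $\esig = \mathcal{S}(\int_0^\cdot\mathfrak{y}(u)\,\d{u})$ described in Section~\ref{sec:free_developments} produces a unique $\TT_1$-valued path.
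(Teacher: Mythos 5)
The paper gives no proof of this proposition; it is imported directly from the cited reference~\cite{FHT22}, so there is nothing in the paper itself to compare against. Your reconstruction is sound and follows the route taken there. Starting from the Itô form of the Marcus signature SDE, inserting the Lévy--Itô decomposition, compensating both the large-jump drift and the exponential jump correction, and taking expectations leaves exactly the drift $\mathfrak{y}$; the algebraic identity you use, $\int_{\Norm{x}>1}x\,K + \int(\exp_\otimes(x)-1-x)\,K = \int(\exp_\otimes(x)-1-x\,\indic{\Norm{x}\le 1})\,K$, is correct. You also correctly identify the real technical content: propagating $\E[\abs{\pi_n S_t}]<\infty$ level by level so that the local martingales are true martingales and Fubini applies. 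The moment hypothesis, read as holding for every $p>1$ so that all large-jump polynomial moments are finite, is precisely what makes this induction (via BDG/Kunita plus a Grönwall argument, in the spirit of Proposition~\ref{prop:free_dev_estimates}) close. Your preferred alternative via the Magnus expansion of $\log_\otimes S$ is exactly the machinery developed in the cited reference, so either route is faithful. The uniqueness argument---componentwise integrability of $\mathfrak{y}$ together with the triangular structure of the free development from Section~\ref{sec:free_developments}---is correct and complete. No gaps.
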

\begin{remark}
The above result is a L\'evy-Khintchine-type formula for expected signatures.
The homogeneous case $\mathfrak{y}(t) \equiv \mathfrak{y}$ goes back to \cite{friz2017general} where one obtains the signature cumulants $\log_\otimes\E[\Sig{\lvyalg}_{0,T}] = T \mathfrak{y}$.
In the homogeneous and continuous case this is known as Fawcett's formula \cite{fawcett2002problems}.
Returning to the inhomogeneous case, we also note that the signature cumulants are obtained from a Magnus expansion of $\mathfrak{y}$ in $\TT_0$ \cite[Section~6.2.2]{FHT22}.
\end{remark}

Note that in terminology and notation of Section~\ref{sec:free_developments} $\esig$ is the \emph{free development} of the path $\int_0^\cdot \mathfrak{y}(t)\dd{t}$, i.e., $\esig = \mathcal{S}(\int_0^\cdot \mathfrak{y}(t)\dd{t})$. 
This is precisely the fact that will allow us to %
derive an extended PDE system for the expected signature kernel of inhomogeneous L\'evy processes in the next subsection.
To this end, note that the requirements in Proposition~\ref{prop:levy_esig} are in general not enough to conclude that $\esig$ takes values in $\TT^2$.
The following result yields a sufficient condition for $\esig$ to take values in $\TT^1$, therefore in $\TT^2$, as desired for the existence of the expected signature kernel in \eqref{eq:expected_signature_kernel}.

\begin{theorem}\label{cor:exponential_moments}
    Let $\lvyalg \sim \mathscr{L}^N(b, a, K)$ and denote by $\mathfrak{y}$ its characteristic velocity defined in \eqref{def:frak_eta}.
    For any $\lambda > 0$ and $t\ge0$ a sufficient condition for $\int_{0}^{t}\Vert\delta_\lambda\mathfrak{y}(u)\Vert_1\dd{u} < \infty$, hence $\delta_\lambda\E[\Sig{\lvyalg}_{0,t}] \in \TT^1$, is given by
    \begin{align}\label{eq:exponential_moments}
        \int_0^t\int_{\mathfrak{g}^N(V)} \indic{\Norm{x}> 1}(e^{ \Vert \delta_\lambda x \Vert_1} - 1) K_t(\dd x)\dd t < \infty.
    \end{align}
     If the above holds for all $\lambda > 0$, then $\lvyalg$ belongs to the Chevyrev--Lyons class (cf. \cite{chevyrev2016characteristic}); that is, the distribution of $\Sig{\lvyalg}_{0,t}$ is characterized by its expectation $\E[\Sig{\lvyalg}_{0,t}]\in \mathcal{R}^{\infty}$ (using the notation from Section~\ref{sec:radius}).
\end{theorem}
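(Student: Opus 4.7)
The plan is to control $\int_0^t \Vert \delta_\lambda \mathfrak{y}(u)\Vert_1 \dd u$ directly from the explicit formula \eqref{def:frak_eta}, and then invoke Proposition~\ref{prop:free_dev_estimates}.\ref{itm:grnwld_estimate}. Since $\delta_\lambda$ is an algebra morphism commuting with componentwise integration, Proposition~\ref{lem:gronwall}.2 identifies $\delta_\lambda \esig = \mathcal{S}(\int_0^\cdot \delta_\lambda \mathfrak{y}(u)\dd u)$, so once $\int_0^t \Vert \delta_\lambda \mathfrak{y}(u)\Vert_1 \dd u < \infty$ is established we obtain $\Vert \delta_\lambda \esig(t)\Vert_1 \le \exp(\int_0^t \Vert\delta_\lambda \mathfrak{y}(u)\Vert_1 \dd u) < \infty$, i.e., $\delta_\lambda \esig(t) \in \TT^1$, which is precisely what the first assertion demands.

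To carry out the bound, I would apply $\delta_\lambda$ termwise in \eqref{def:frak_eta} and handle the four pieces separately. The drift $\delta_\lambda b(u) \in \mathfrak{g}^N(V)$ and the covariation $\delta_\lambda a(u) \in \mathfrak{S}^N(V)$ live in finite-dimensional spaces, so their $\TT^1$-norms are bounded by $(\lambda,N)$-dependent constants times the Euclidean norms of $b(u), a(u)$, and the two resulting time-integrals are finite by the very definition of a differential triplet. For the compensated jump part $\exp_\otimes(\delta_\lambda x) - 1 - \delta_\lambda x \indic{\Norm{x}\le 1}$ I would split at $\Norm{x}=1$. On $\{\Norm{x}\le 1\}$, the finite degree of $\mathfrak{g}^N(V)$ gives the linear comparison $\Vert \delta_\lambda x\Vert_1 = \sum_{k=1}^N \lambda^k |x^{(k)}| \le C_{\lambda,N}\Norm{x}$ with $C_{\lambda,N} := \sum_{k=1}^N \lambda^k$; combined with the second-order Taylor bound $\Vert \exp_\otimes(y) - 1 - y\Vert_1 \le \tfrac12 \Vert y\Vert_1^2 e^{\Vert y\Vert_1}$ (itself a consequence of Proposition~\ref{prop:young}), this yields an integrand of order $\Norm{x}^2$, which is $K_u$-integrable by the defining property of a L\'evy measure. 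On $\{\Norm{x}>1\}$ the truncation indicator vanishes and the first-order bound $\Vert \exp_\otimes(y) - 1\Vert_1 \le e^{\Vert y\Vert_1} - 1$ reduces integrability in $(t,x)$ to precisely hypothesis~\eqref{eq:exponential_moments}.

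The only non-routine step is the small-jump estimate; its heart is the linear comparison $\Vert \delta_\lambda x\Vert_1 \lesssim \Norm{x}$ on $\{\Norm{x}\le 1\}$, which crucially uses the cap on degrees imposed by $\mathfrak{g}^N(V)$ and would fail in the full extended tensor algebra. Once $\delta_\lambda \esig(t) \in \TT^1$ has been established for each fixed $\lambda > 0$ under the quantified hypothesis, the assertion $\esig(t) \in \mathcal{R}^\infty$ when \eqref{eq:exponential_moments} holds for every $\lambda > 0$ is immediate from the definition of $\mathcal{R}^\infty$ in Section~\ref{sec:radius}, and the characterization of the law of $\Sig{\lvyalg}_{0,t}$ by its expectation then follows from the main result of \cite{chevyrev2016characteristic}.
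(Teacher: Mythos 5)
Your proof is correct and follows essentially the same strategy as the paper's: decompose $\Vert\delta_\lambda\mathfrak{y}(u)\Vert_1$ by the triangle inequality, dispose of the drift and covariance terms by finite dimensionality, split the jump integral at $\Norm{x}=1$, bound the small-jump piece quadratically and integrate against the L\'evy measure, and reduce the large-jump piece to hypothesis~\eqref{eq:exponential_moments}; then conclude via Proposition~\ref{prop:free_dev_estimates}.\ref{itm:grnwld_estimate} and Propositions~\ref{lem:gronwall}.2--3. The one organizational difference is that the paper first reduces to $\lambda=1$ by invoking the scaling Lemma~\ref{rem:triplet_scaling} (and must then account for the mismatch of truncation indicators $\indic{\Norm{x}\le 1}$ versus $\indic{\Norm{\delta_\lambda x}\le 1}$), whereas you work with $\delta_\lambda\mathfrak{y}$ directly and get the small-jump comparison from the elementary inequality $\Vert\delta_\lambda x\Vert_1\le (\sum_{k=1}^N\lambda^k)\Norm{x}$ on $\mathfrak{g}^N(V)$. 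Your route is marginally more self-contained, avoiding the triplet-scaling machinery; the paper's yields a slightly sharper intermediate bound $e^{\Vert x\Vert_1}-1-\Vert x\Vert_1$ in place of your $\tfrac12\Vert x\Vert_1^2 e^{\Vert x\Vert_1}$, though both are $O(\Norm{x}^2)$, which is all that is needed.
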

\begin{remark}We note that the above is an exponential-moment condition on the distribution of the jumps of $\lvyalg$, which is trivially satisfied when $\lvyalg$ is continuous.
\end{remark}

\begin{proof}
    Note that for any $\lambda >0$ we have $\delta_\lambda \Sig{\lvyalg} =  \Sig{\delta_\lambda\lvyalg}$ and by  Lemma~\ref{rem:triplet_scaling}, $\delta_\lambda\lvyalg\sim \mathscr{L}^N(b^{\lambda}, a^{\lambda}, K^{\lambda})$ with $K^{\lambda} = K(\delta_{\lambda^{-1}}(\cdot))$.
    Since $|1_{\Norm{x}>1}-1_{\Norm{\delta_\lambda x}>1}| \le C(1\wedge\Norm{x}^2)$ for a suitable $C>0$, we readily see that \eqref{eq:exponential_moments} is equivalent to
    $$\int_0^t\int_{\mathfrak{g}^N(V)} \indic{\Norm{x}> 1}(e^{ \Vert  x \Vert_1} - 1) K^{\lambda}_t(\dd x)\dd t < \infty.$$
    Hence we can assume w.l.o.g. that $\lambda = 1$.
	By Proposition~\ref{lem:gronwall}.3 it suffices to show that \eqref{eq:exponential_moments} implies that 
    $\int_0^t \Vert \mathfrak{y}(u) \Vert_1 \dd{u} < \infty$.
    To this end, note that from Proposition~\ref{lem:gronwall} we have for any $\bx \in \TT_0$ the estimate 
    $$\Vert\exp_\otimes(\bx)-1\Vert_1 ~=~ \Vert\exp_\otimes(\bx)\Vert_1 - 1 ~\le~ e^{ \Vert \bx \Vert_1} - 1.$$
    Using Proposition~\ref{prop:young} we then also get
    \begin{multline*}
    \Vert\exp_\otimes(\bx)-1- \bx  \Vert_1 
    ~=~ \left\Vert\int_0^1 (\exp_\otimes(t \bx)-1)\otimes \bx \d{t} \right\Vert_1 \\
    ~\le~  \int_0^1 \Vert\exp_\otimes(t\bx)-1  \Vert_1 \Vert\bx\Vert_1 \d{t} \\
    ~\le~  \int_0^1 (e^{t\Vert \bx \Vert_1}- 1) \Vert\bx\Vert_1 \d{t} %
    ~=~ e^{\Vert \bx\Vert_1} - 1 - \Vert\bx\Vert_1.
    \end{multline*}
    This allows us to estimate 
    \begin{align*}
        \Vert \mathfrak{y}(t) \Vert_1 
        &\le \Vert b(t) \Vert_1 + \frac{1}{2}\Vert  a(t) \Vert_1 + \left\Vert \int_{\mathfrak{g}^N(V)}(\exp_\otimes(x) - 1 - x \indic{\Norm{x}\le 1}) K_t(\d{x})\right\Vert_1 \\
        &\le \Vert b(t) \Vert_1 +  \frac{1}{2}\Vert  a(t) \Vert_1 + \int_{\mathfrak{g}^N(V)} (e^{\Vert x \Vert_1} - 1 - \Vert x\Vert_1 \indic{\Norm{x}\le 1})  K_t(\d{x}).
    \end{align*}
    In view of assumption \eqref{eq:exponential_moments} and the integrability of $\Vert b\Vert_1 + \Vert a\Vert_1 \le C(\Norm{b} + \Norm{a})$ for suitable $C>0$ it thus suffices to conclude that 
    \begin{align*}
        \int_0^t\int_{\mathfrak{g}^N(V)} \indic{\Norm{x}\le 1}(e^{\Vert x \Vert_1} - 1 - \Vert x\Vert_1 )  K_s(\d{x})\dd{s} < \infty, \qquad t \ge 0,
    \end{align*}
    which follows from the fact that $K_t(\dd{x})\dd{t}$ is assumed to be a L\'evy measure and the fact that 
    $|e^{\Vert x \Vert_1} - 1 - \Vert x\Vert_1| \le C \Norm{x}^2 $ for $x\in\mathfrak{g}^N(V)$ with $\Norm{x} \le 1$ and a sufficiently large constant $C$ (depending only on $d$ and $N$).
\end{proof}

\subsection{An infinite-dimensional equation for the general case}

We now present the extended PDE system for the expected signature kernel in the most general case of $G^N(V)$-valued inhomogeneous L\'evy processes.
Recall the definition of the adjoint left and right multiplications $\dualLeftOp$ and $\dualRightOp$ from Proposition~\ref{prop:adjoint_mul}.
For more convenient notation, we define for all $\bx, \by \in \TT^1$:
$$\dualLeftZ{\bx}{\by} ~:=~ \dualLeft{\bx}{\by} - \langle\bx, \by\rangle ~:=~ \dualLeft{\bx}{\by} - \pi_0(\dualLeft{\bx}{\by})\; \in \TT^1_0,$$
and
$$\dualRightZ{\bx}{\by} ~:=~ \dualRight{\bx}{\by} - \langle\bx, \by\rangle ~:=~ \dualRight{\bx}{\by} - \pi_0(\dualRight{\bx}{\by})\; \in \TT^1_0.$$
\begin{theorem}\label{thm:main}
    Let $\mathfrak{y}, \widetilde{\mathfrak{y}}: [0,\infty) \to \TT_0$ measurable such that $$\int_0^t \Vert \mathfrak{y}(u) \Vert_1 \dd{u}<\infty,\quad  \int_0^t \Vert \widetilde{\mathfrak{y}}(u) \Vert_1 \dd{u} < \infty, \qquad t\ge0,$$ and define by $\esig = \mathcal{S}(\int_0^\cdot \mathfrak{y}(t)\dd{t})$ and $\widetilde{\esig} = \mathcal{S}(\int_0^\cdot \widetilde{\mathfrak{y}}(t)\dd{t})$ their free developments, i.e.,%
    \begin{align*}
        \esig(t) = 1+ \int_0^t \esig(u)\otimes \mathfrak{y}(u)\dd{u} \quad\text{and}\quad \widetilde{\esig}(t) = 1+ \int_0^t \widetilde{\esig}(u)\otimes \widetilde{\mathfrak{y}}(u)\dd{u}.
    \end{align*}
    Then the triplet $(u, \phi, \widetilde{\phi}): [0,\infty)^2 \to \R \times \TT^1_0 \times \TT^1_0$ defined by
    \begin{align*}
        u(s,t) = \langle \esig(s), \widetilde{\esig}(t) \rangle, \qquad \phi(s,t) := \dualLeftZ{\widetilde{\esig}(t)}{\esig(s)},\qquad
        \widetilde{\phi}(s,t) := \dualLeftZ{\esig(s)}{\widetilde{\esig}(t)},
    \end{align*}
    is the unique continuous solution to the following system of integral equations
    \begin{equation}\label{eq:extended_pde}
    \left\{
    \begin{split}
        u(s,t) =&~ 1 +\int_0^s\int_0^t \Big\{ u(r,v) \langle \mathfrak{y}(r), \widetilde{\mathfrak{y}}(v)\rangle + \langle \phi(r,v), \dualRightZ{\mathfrak{y}(r)}{\widetilde{\mathfrak{y}}(v)}\rangle\\
        &\qquad\qquad\qquad\qquad \qquad\qquad\qquad\qquad  +  \langle \widetilde{\phi}(r,v), \dualRightZ{\widetilde{\mathfrak{y}}(v)}{\mathfrak{y}(r)}\rangle\Big\} \dd{v}\dd{r} \\
        \phi(s,t) =&~ \int_0^s \left\{ u(r,t) \mathfrak{y}(r) + \phi(r,t) \otimes \mathfrak{y}(r) + \dualLeftZ{\widetilde{\phi}(r,t)}{\mathfrak{y}(r)} \right\}\dd{r}\\
        \widetilde\phi(s,t) =&~ \int_0^t \left\{ u(s,r) \widetilde{\mathfrak{y}}(r) + \widetilde\phi(s,r) \otimes \widetilde{\mathfrak{y}}(r) + \dualLeftZ{{\phi}(s,r)}{\widetilde{\mathfrak{y}}(r)} \right\}\dd{r}
        \end{split},\right.
    \end{equation}
    for all $s,t\ge0$.
    In particular, for inhomogeneous L\'evy processes $$\lvyalg \sim \mathscr{L}^N(b, a, K) \quad\text{and}\quad \widetilde{\lvyalg} \sim \mathscr{L}^N(\widetilde{b}, \widetilde{a}, \widetilde{K}),$$ with characteristic velocities $\mathfrak{y}$ and $\widetilde{\mathfrak{y}}$  defined in \eqref{def:frak_eta} each satisfying the moment condition \eqref{eq:exponential_moments}, then the expected signature kernel is given by
    $$\big\langle \E[\Sig{\lvyalg}_{0,s}], \E[\Sig{\widetilde{\lvyalg}}_{0,t}] \big\rangle = u(s,t), \qquad s,t\ge0.$$
\end{theorem}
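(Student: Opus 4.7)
The plan is to verify directly that the triple $(u,\phi,\widetilde\phi)$ defined in the statement solves \eqref{eq:extended_pde}, then to deduce uniqueness by a Gronwall argument in $\TT^1$, and finally to invoke Proposition~\ref{prop:levy_esig} for the L\'evy specialization. Proposition~\ref{prop:free_dev_estimates}.2 places $\esig$ and $\widetilde\esig$ in $\TT^1$ with $\dot\esig(s) = \esig(s)\otimes\mathfrak{y}(s)$, $\dot{\widetilde\esig}(t) = \widetilde\esig(t)\otimes\widetilde{\mathfrak{y}}(t)$, while continuity of the adjoint operations $\dualLeftOp,\dualRightOp\colon \TT^1\times\TT^1\to\TT^1$ (Proposition~\ref{prop:adjoint_mul} with $p=q=r=1$) makes $(u,\phi,\widetilde\phi)$ continuous into $\R\times\TT^1_0\times\TT^1_0$ with boundary data $u(s,0)=u(0,t)=1$ and $\phi(0,t)=\widetilde\phi(s,0)=0$.

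The core content reduces to two differentiated identities. Since $\partial_s\phi(s,t) = \dualLeftZ{\widetilde\esig(t)}{\esig(s)\otimes\mathfrak{y}(s)}$ by linearity of $\dualLeftOp$ in the second slot, the $\phi$-equation in \eqref{eq:extended_pde} is equivalent to
\begin{align*}
    \dualLeftZ{\widetilde\esig(t)}{\esig(s)\otimes\mathfrak{y}(s)} = u(s,t)\,\mathfrak{y}(s) + \phi(s,t)\otimes\mathfrak{y}(s) + \dualLeftZ{\widetilde\phi(s,t)}{\mathfrak{y}(s)}.
\end{align*}
I would prove this word-by-word: for $w\in\mathcal{W}$ with $|w|\ge 1$ the left-hand coefficient equals $\sum_{v\in\mathcal{W}}\widetilde\esig^v(t)\sum_{vw=pq,\,|q|\ge 1}\esig^p(s)\,\mathfrak{y}^q(s)$, and splitting the cut $vw=pq$ into the three disjoint regimes -- strictly inside $w$, at the $v/w$ boundary, strictly inside $v$ -- yields respectively $(\phi\otimes\mathfrak{y})^w$, $u\,\mathfrak{y}^w$, and $(\dualLeftZ{\widetilde\phi}{\mathfrak{y}})^w$. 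The symmetric identity for $\partial_t\widetilde\phi$ follows by exchanging roles, and integrating in $s$ (resp.\ $t$) against vanishing boundary data then delivers the $\phi$- and $\widetilde\phi$-equations.

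Similarly $\partial_s\partial_t u(s,t) = \langle\esig(s)\otimes\mathfrak{y}(s),\widetilde\esig(t)\otimes\widetilde{\mathfrak{y}}(t)\rangle$, so the $u$-equation reduces to
\begin{align*}
    \langle\esig\otimes\mathfrak{y},\widetilde\esig\otimes\widetilde{\mathfrak{y}}\rangle = u\,\langle\mathfrak{y},\widetilde{\mathfrak{y}}\rangle + \langle\phi,\dualRightZ{\mathfrak{y}}{\widetilde{\mathfrak{y}}}\rangle + \langle\widetilde\phi,\dualRightZ{\widetilde{\mathfrak{y}}}{\mathfrak{y}}\rangle,
\end{align*}
which I would prove by decomposing $\mathfrak{y}=\sum_n\mathfrak{y}^{(n)}$, $\widetilde{\mathfrak{y}}=\sum_m\widetilde{\mathfrak{y}}^{(m)}$ into homogeneous levels, applying the second assertion of Proposition~\ref{prop:left-right-technical} to each pair, and using $\dualLeft{\esig}{\widetilde\esig}=u\mathbf{1}+\widetilde\phi$ together with $\dualLeft{\widetilde\esig}{\esig}=u\mathbf{1}+\phi$. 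The three cases $n=m$, $n>m$, $n<m$ reproduce exactly the three summands (the diagonal contributes the scalar $u\langle\mathfrak{y},\widetilde{\mathfrak{y}}\rangle$, while the off-diagonals land in the images of $\dualRightZ{}{}$). Two successive integrations starting from $u(s,0)=u(0,t)=1$ then deliver the $u$-equation.

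Uniqueness is routine: the difference of two continuous solutions satisfies the same linear system with zero data, and Young's inequality (Proposition~\ref{prop:young}) together with Proposition~\ref{prop:adjoint_mul} bounds each term on the right-hand sides in $\TT^1$-norm by $\Vert\mathfrak{y}(r)\Vert_1$ or $\Vert\widetilde{\mathfrak{y}}(v)\Vert_1$ times the norms of the unknowns, so a two-variable Gronwall argument closes the estimate. For the L\'evy assertion, the exponential-moment condition \eqref{eq:exponential_moments} together with Theorem~\ref{cor:exponential_moments} secures $\onevartwo{\mathfrak{y}}{0}{t},\onevartwo{\widetilde{\mathfrak{y}}}{0}{t}<\infty$, and Proposition~\ref{prop:levy_esig} identifies $\E[\Sig{\lvyalg}_{0,\cdot}]$ and $\E[\Sig{\widetilde\lvyalg}_{0,\cdot}]$ with the free developments of the characteristic velocities, so the kernel formula follows from the first part. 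The hard part is the word-level identity for $\partial_s\phi$: no adjoint manipulation reduces $\dualLeftZ{\widetilde\esig}{\esig\otimes\mathfrak{y}}$ directly to its three-term right-hand side (the nearby identity $\dualLeft{\esig\otimes\mathfrak{y}}{\widetilde\esig} = \dualLeft{\mathfrak{y}}{\dualLeft{\esig}{\widetilde\esig}}$ has the wrong argument order), so an explicit combinatorial enumeration of prefix cuts -- with careful separation of the boundary case $u\,\mathfrak{y}$ from the two interior cases $\phi\otimes\mathfrak{y}$ and $\dualLeftZ{\widetilde\phi}{\mathfrak{y}}$ -- seems unavoidable.
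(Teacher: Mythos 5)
Your proposal is correct and follows essentially the same path as the paper: the level decomposition of $\langle\esig\otimes\mathfrak{y},\widetilde\esig\otimes\widetilde{\mathfrak{y}}\rangle$ into diagonal and off-diagonal pieces, the coefficient-by-coefficient treatment of the $\phi$- and $\widetilde\phi$-equations, a two-variable Gr\"onwall argument for uniqueness (the paper packages this as Lemma~\ref{lem:apriori}), and Theorem~\ref{cor:exponential_moments} plus Proposition~\ref{prop:levy_esig} for the L\'evy specialization. One small correction: the prefix-cut enumeration you call ``unavoidable'' for the $\phi$-equation is exactly the content of Proposition~\ref{prop:left-right-technical}; the paper applies that proposition directly to $\langle\esig(s)\otimes\mathfrak{y}(s),\widetilde\esig(t)\otimes e_w\rangle$ (level-splitting together with symmetry of the pairing), which is the adjoint manipulation that does reduce the expression to its three-term right-hand side.
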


\begin{remark}
The moment condition \eqref{eq:exponential_moments} is sufficient for the expected signature kernel to exist and for the above integral equation to be well-defined.
A potentially weaker, yet less explicit, alternative is to simply require directly that the characteristic velocity satisfies $\int_0^t \Vert \mathfrak{y}(s) \Vert_1 \, \mathrm{d}s < \infty$
for all $t \ge 0$ (and similarly for $\widetilde{\mathfrak{y}}$).
Given the available estimates on $\TT^p$-norms, it is not possible to establish weaker conditions on $\mathfrak{y}$ such that the above system is well-posed (see also Remark~\ref{rmk:grnwl_estimate}).
\end{remark}
\begin{remark}
    In the case of a continuous characteristic velocities, the above system can be written in differential form, with the following PDE for the kernel
    $$\frac{\partial^2}{\partial{s}\partial{t}} u(s,t) = u(s,t) \langle \mathfrak{y}(s), \widetilde{\mathfrak{y}}(t)\rangle + \langle \phi(s,t), \dualRightZ{\mathfrak{y}(s)}{\widetilde{\mathfrak{y}}(t)}\rangle +  \langle \widetilde{\phi}(s,t), \dualRightZ{\widetilde{\mathfrak{y}}(t)}{\mathfrak{y}(s)}\rangle,$$
    with boundary conditions $u(0, \cdot) = u(\cdot, 0) = 1$ and corresponding coupled ODEs for $\phi$ and $\widetilde{\phi}$.
\end{remark}
\begin{proof} 
By the assumptions on $\mathfrak{y}$ and $\widetilde{\mathfrak{y}}$ it follows from Proposition~\ref{prop:free_dev_estimates}.\ref{itm:grnwld_estimate} that  
${\esig}(s), \widetilde{\esig}(t) \in \TT^1$ and thus $u(s,t) = \langle \esig(s), \widetilde{\esig}(t)\rangle$ is well defined for all $s,t \ge 0$.
By Proposition~\ref{prop:young} and again Proposition~\ref{prop:free_dev_estimates}.\ref{itm:grnwld_estimate}, we have for all $s\ge0$ the estimate
\begin{align*}
\onevar{\esig}{s} = \int_0^s\Vert \esig(r) \otimes \mathfrak{y}(r)\Vert_1 \dd{r} ~\le~ e^{\int_0^s\Vert \mathfrak{y}(r)\Vert_1 \dd{r}} - 1.
\end{align*}
An analogous estimate applies to the integral of $\widetilde{\esig}\otimes\widetilde{\mathfrak{y}}$.
Then by H\"older's inequality, for any $s, t \ge0$ we have
\begin{multline*}
\int_0^s\int_0^t\sum_{k=0}^\infty \big\vert\big\langle \pi_k\big(\esig(r)\otimes\mathfrak{y}(r)\big), \pi_k\big(\widetilde{\esig}(v)\otimes\widetilde{\mathfrak{y}}(v)\big)\big\rangle\big\vert\dd{r}\dd{v}
    \le\Big(e^{\int_0^s\Vert \mathfrak{y}(r)\Vert_1 \dd{r}} - 1\Big)\Big(e^{\int_0^t\Vert \widetilde{\mathfrak{y}}(v)\Vert_1 \dd{v}} - 1\Big) < \infty,
\end{multline*}
which allows us to apply Fubini's theorem in the following calculation
\begin{align*}
    u(s,t) =& \Big\langle 1+\int_0^s\esig(r)\otimes\mathfrak{y}(r)\dd{r}, 1+\int_0^t\widetilde{\esig}(v)\otimes\widetilde{\mathfrak{y}}(v)\dd{v}\Big\rangle \\
    =& 1 + \int_0^s\int_0^t\big\langle \esig(r)\otimes\mathfrak{y}(r), \widetilde{\esig}(v)\otimes\widetilde{\mathfrak{y}}(v)\big\rangle\dd{r}\dd{v}.
\end{align*}
For the remaining arguments to derive equation \eqref{eq:extended_pde}, we follow \cite[Proof of Theorem 5.3]{lemercier2024high}, providing full details for the reader's convenience.
We first decompose the inner product term in the above integral as follows
\begin{align}\nonumber
   \big\langle \esig(s)\otimes\mathfrak{y}(s), \widetilde{\esig}(t)\otimes\widetilde{\mathfrak{y}}(t)\big\rangle 
   =& \sum_{n=1}^\infty \sum_{k=1}^\infty  \big\langle \esig(s)\otimes\pi_n\mathfrak{y}(s), \widetilde{\esig}(t)\otimes\pi_k\widetilde{\mathfrak{y}}(t)\big\rangle \\\label{eq:proof_3_sums_1}
   =& \sum_{n=1}^\infty  \big\langle \esig(s)\otimes\pi_n\mathfrak{y}(s), \widetilde{\esig}(t)\otimes\pi_n\widetilde{\mathfrak{y}}(t)\big\rangle \\\label{eq:proof_3_sums_2}
   &+\sum_{n=1}^\infty \sum_{k=1}^{n-1}  \big\langle \esig(s)\otimes\pi_n\mathfrak{y}(s), \widetilde{\esig}(t)\otimes\pi_k\widetilde{\mathfrak{y}}(t)\big\rangle \\\label{eq:proof_3_sums_3}
    &+\sum_{n=1}^\infty \sum_{k=n+1}^{\infty}  \big\langle \esig(s)\otimes\pi_n\mathfrak{y}(s), \widetilde{\esig}(t)\otimes\pi_k\widetilde{\mathfrak{y}}(t)\big\rangle,
\end{align}
for almost all $(s,t)\in[0,\infty)^2$. Henceforth, unless otherwise stated, $s$ and $t$ are assumed to lie outside a null set where $\Vert\mathfrak{y}(s)\Vert_1$ or $\Vert\widetilde{\mathfrak{y}}(t)\Vert_1$ may be infinite.
The fact that each of the sums appearing above is absolutely convergent follows from application of H\"older's inequality and Proposition~\ref{prop:young}.
For the sum in \eqref{eq:proof_3_sums_1} we have
\begin{align*}
    \sum_{n=1}^\infty  \big\langle \esig(s)\otimes\pi_n\mathfrak{y}(s), \widetilde{\esig}(t)\otimes\pi_n\widetilde{\mathfrak{y}}(t)\big\rangle 
    &= \sum_{n=1}^\infty  \big\langle \esig(s), \widetilde{\esig}(t)\big\rangle \big\langle \pi_n\mathfrak{y}(s), \pi_n\widetilde{\mathfrak{y}}(t)\big\rangle \\
    &= u(s,t) \big\langle \mathfrak{y}(s), \widetilde{\mathfrak{y}}(t)\big\rangle.
\end{align*}
For the sum in \eqref{eq:proof_3_sums_2} we apply Proposition~\ref{prop:left-right-technical} to each term and obtain
\begin{align*}
\big\langle \esig(s)\otimes\pi_n\mathfrak{y}(s), \widetilde{\esig}(t)\otimes\pi_k\widetilde{\mathfrak{y}}(t)\big\rangle 
&= \big\langle \dualLeft{\esig(s)}{\widetilde{\esig}(t)}, \dualRight{\pi_k\widetilde{\mathfrak{y}}(t)}{\pi_n\mathfrak{y}(s)} \big\rangle \\&= \big\langle \dualLeftZ{\esig(s)}{\widetilde{\esig}(t)}, \dualRightZ{\pi_k\widetilde{\mathfrak{y}}(t)}{\pi_n\mathfrak{y}(s)} \big\rangle,
\end{align*}
where in the second equality we specifically use that $k<n$.
Using further that $\dualRight{\pi_k\widetilde{\mathfrak{y}}(t)}{\pi_n{\mathfrak{y}}(s)}= 0$ for $k > n$ and that $\dualRight{}{}:\TT^1\times\TT^1\to\TT^1$ is bilinear and continuous, we can conclude for the sum in \eqref{eq:proof_3_sums_2}
\begin{align*}
    \sum_{n=1}^\infty \sum_{k=1}^{n-1}  \big\langle \esig(s)\otimes\pi_n\mathfrak{y}(s), \widetilde{\esig}(t)\otimes\pi_k\widetilde{\mathfrak{y}}(t)\big\rangle = \big\langle \widetilde{\phi}(s,t), \dualRightZ{\widetilde{\mathfrak{y}}(t)}{\mathfrak{y}(s)} \big\rangle.
\end{align*}
An analogous argument for the sum in \eqref{eq:proof_3_sums_3} proves
\begin{align*}
    \sum_{n=1}^\infty \sum_{k=n+1}^{\infty}  \big\langle \esig(s)\otimes\pi_n\mathfrak{y}(s), \widetilde{\esig}(t)\otimes\pi_k\widetilde{\mathfrak{y}}(t)\big\rangle = \big\langle {\phi}(s,t), \dualRightZ{\mathfrak{y}(s)}{\widetilde{\mathfrak{y}}(t)} \big\rangle.
\end{align*}
It remains to derive the corresponding equations for $\phi$ and $\widetilde{\phi}$.
To this end, let $w\in\mathcal{W}\setminus\{\varnothing\}$
\begin{align*}
    \big\langle \phi(s,t), e_w \rangle = \big\langle \esig(s), \widetilde{\esig}(t)\otimes e_w \rangle = \int_0^s\big\langle \esig(r)\otimes\mathfrak{y}(r), \widetilde{\esig}(t)\otimes e_w \rangle\dd{r}
\end{align*}
Then using Proposition~\ref{prop:left-right-technical} once again we see that
\begin{align*}
    \big\langle \esig(s)\otimes\mathfrak{y}(s), \widetilde{\esig}(t)\otimes e_w \big\rangle = u(s,t)\big\langle \mathfrak{y}(s), e_w \big\rangle + \big\langle \widetilde{\phi}(s,t), \dualRightZ{e_w}{\mathfrak{y}(s)} \big\rangle + \big\langle {\phi}(s,t), \dualRightZ{\mathfrak{y}(s)}{e_w} \big\rangle \\
    = u(s,t)\big\langle \mathfrak{y}(s), e_w \big\rangle + \big\langle \dualLeftZ{\widetilde{\phi}(s,t)}{\mathfrak{y}(s)}, e_w\big\rangle + \big\langle {\phi}(s,t)\otimes\mathfrak{y}(s), e_w \big\rangle.
\end{align*}
Finally, recalling that $\langle \phi, e_{\varnothing} \rangle \equiv 0$ and writing $\phi(s,t) = \sum_{w\in\mathcal{W}}\big\langle \phi(s,t), e_w\big\rangle e_w$ we readily arrive at the equation for $\phi$.
The derivation of the corresponding equation for $\widetilde{\phi}$ is entirely analogous.

Uniqueness of the solution directly follows from the linearity of the system and the a priori estimate in Lemma~\ref{lem:apriori}.

Finally, for inhomogeneous L\'evy processes $\lvyalg, \widetilde{\lvyalg}$ satisfying the moment condition \eqref{eq:exponential_moments}, it follows from Theorem~\ref{cor:exponential_moments} that characteristic velocities $\mathfrak{y}$ and $\widetilde{\mathfrak{y}}$ satisfy the integrability condition, required in the first part of the theorem. 
Furthermore, by Proposition~\ref{prop:levy_esig} it follows that $\E[\Sig{\lvyalg}_{0,\cdot}], \E[\Sig{\widetilde{\lvyalg}}_{0,\cdot}]$ are the free developments of $\mathfrak{y},\widetilde{\mathfrak{y}}$ respectively, which readily allows us to conclude by the first part of theorem.
\end{proof}

\subsection{Truncated characteristics and the continuous case}
\label{sec:truncated_pde}

In the presence of nontrivial L\'evy measures the integral system from the previous section \eqref{eq:extended_pde} is inherently infinite dimensional, since the characteristic velocities include all (multivariate) moments of the L\'evy measure in their levels.
In this section we provide an error estimate for the approximation of \eqref{eq:extended_pde} by a finite-dimensional system obtained through truncating the characteristic velocities.
Compared to truncating the expected signature directly, this approach generally provides stronger approximation rates as demonstrated for the case of a Gaussian L\'evy measure in Example~\ref{exmpl:gauss_jumps}.

In the case of a continuous inhomogeneous L\'evy process, the characteristic velocity has only finitely many nonzero levels. Consequently, the expected signature kernel can be computed exactly by solving a finite-dimensional system of integral equations.

    \newcommand{\n}{{N-1}}
    \newcommand{\m}{{M-1}}
    \newcommand{\nn}{{N}}
    \newcommand{\mm}{{M}}
    \newcommand{\nm}{{P-1}}
    \newcommand{\nnmm}{{P}}
    \newcommand{\varvar}{{K}}
    \newcommand{\var}{{K-1}}

\begin{theorem}\label{thm:truncated}
    Let $\mathfrak{y}, \widetilde{\mathfrak{y}}: [0,\infty) \to \TT_0$ as in Theorem~\ref{thm:main}.
    Then for any $\mm, \nn \in \mathbb{N}$ there is a unique continuous solution 
    $$(w, f, \widetilde{f}): [0,\infty)^2\to \R \times T^\n (V) \times T^\m(V),$$
    to the system 
    \begin{equation}\label{eq:extended_pde_trunc}
    \left\{
    \begin{split}
        w(s,t) =&~ 1 +\int_0^s\int_0^t \Big\{ w(r,v) \langle \mathfrak{y}^{\nnmm}(r), \widetilde{\mathfrak{y}}^{\nnmm}(v)\rangle + \langle f(r,v), \dualRightZ{\mathfrak{y}^\nm(r)}{\widetilde{\mathfrak{y}}^\nn(v)}\rangle \\
        &\qquad\qquad\qquad\qquad\qquad\qquad\qquad\qquad\quad+  \langle \widetilde{f}(r,v), \dualRightZ{\widetilde{\mathfrak{y}}^\nm(v)}{\mathfrak{y}^\mm(r)}\rangle\Big\} \dd{v}\dd{r}, \\
        f(s,t) =& \int_0^s \left\{ w(r,t) \mathfrak{y}^\nm(r) + f(r,t) \otimes_\n \mathfrak{y}^\nm(r) + \pi_{(0,\n)}\dualLeftZ{\widetilde{f}(r,t)}{{\mathfrak{y}}^\mm(r)} \right\}\dd{r},\\
        \widetilde{f}(s,t) =& \int_0^t \left\{ w(s,r) \widetilde{\mathfrak{y}}^\nm(r) + \widetilde{f}(s,r) \otimes_\m \widetilde{\mathfrak{y}}^\nm(r) + \pi_{(0,\m)}\dualLeftZ{{f}(s,r)}{\widetilde{\mathfrak{y}}^\nn(r)} \right\}\dd{r},
    \end{split}\right.
\end{equation} 
where $\nnmm = \min(\mm,\nn)$, and here for any $\varvar\in\N$ we set $\mathfrak{y}^{\varvar} := \pi_{(0,\varvar)} \mathfrak{y}$, $\widetilde{\mathfrak{y}}^{\varvar} := \pi_{(0,\varvar)} \widetilde{\mathfrak{y}}$.
Denote the first component of this unique solution by $w^{\mm,\nn}:[0,\infty)^{2}\to \R$ and let $u:[0,\infty)^2\to\R$ be the solution to the augmented system \eqref{eq:extended_pde}.
Then the following estimate holds
    $$|u(s,t) - w^{\mm,\nn}(s,t)| \le C_{s}\widetilde{C}_{t}\left( \int_0^s\Vert \mathfrak{y}(r) - \pi_{(0,\mm)}\mathfrak{y}(r)\Vert_1\dd{r} +  \int_0^t\Vert \widetilde{\mathfrak{y}}(r) - \pi_{(0,\nn)}\widetilde{\mathfrak{y}}(r)\Vert_1\dd{r}\right),$$
    for all $s,t\ge0$ and $\mm,\nn\in\N$ where $C_{s}= e^{\int_0^s \Vert \mathfrak{y}(r)\Vert_1\dd{r}}$ and $\widetilde{C}_{t}= e^{\int_0^t  \Vert\widetilde{\mathfrak{y}}(v)\Vert_1\dd{v}}$.
    
    In particular, when $\mathfrak{y}$ and $\widetilde{\mathfrak{y}}$ are given as characteristic velocities of \emph{continuous} inhomogeneous L\'evy processes $\lvyalg$ and $\widetilde{\lvyalg}$, then there exist minimal truncation levels $\mm_0, \nn_0 \in \N$ such that $\mathfrak{y} = \pi_{(0, \mm_0)}\mathfrak{y}$ and  $\widetilde{\mathfrak{y}} = \pi_{(0, \nn_0)}\widetilde{\mathfrak{y}}$. Consequently,
    $$ \langle \E[\Sig{\lvyalg}_{0,s}], \E[\Sig{\widetilde{\lvyalg}}_{0,t}] \rangle = w^{\mm_0, \nn_0}(s,t), \qquad s,t\ge0.$$
\end{theorem}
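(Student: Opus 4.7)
The plan is to split the argument into three parts: existence and uniqueness of the truncated system, identification of its first component with an inner product of truncated free developments, and the error estimate.

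For existence and uniqueness, observe that \eqref{eq:extended_pde_trunc} is a linear Volterra integral system whose unknowns $(w, f, \widetilde{f})$ take values in the finite-dimensional space $\R \times T^{N-1}(V) \times T^{M-1}(V)$, with coefficients built from the locally integrable velocities $\mathfrak{y}, \widetilde{\mathfrak{y}}$ via bounded bilinear operations (inner products, truncated tensor products, and the continuous adjoint products of Proposition~\ref{prop:adjoint_mul}). Standard Picard iteration on each compact rectangle $[0,s]\times[0,t]$, combined with a Gr\"onwall-type a priori estimate of the same flavour as in Lemma~\ref{lem:apriori}, produces a unique continuous global solution $(w^{M,N}, f, \widetilde{f})$.

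The heart of the proof is to identify
$w^{M,N}(s,t) = \langle \esig^M(s), \widetilde{\esig}^N(t)\rangle$ for $s,t \ge 0$, where $\esig^M := \mathcal{S}\bigl(\int_0^\cdot \mathfrak{y}^M(u)\,\dd u\bigr)$ and $\widetilde{\esig}^N := \mathcal{S}\bigl(\int_0^\cdot \widetilde{\mathfrak{y}}^N(u)\,\dd u\bigr)$ are the free developments of the truncated characteristic velocities. I would apply Theorem~\ref{thm:main} to the pair $(\mathfrak{y}^M, \widetilde{\mathfrak{y}}^N)$ to obtain a triple $(\bar u, \bar \phi, \bar{\widetilde\phi})$ solving \eqref{eq:extended_pde} with $\bar u(s,t) = \langle \esig^M(s), \widetilde{\esig}^N(t) \rangle$, and then show that the projected triple $\bigl( \bar u,\, \pi_{(0,N-1)}\bar\phi,\, \pi_{(0,M-1)}\bar{\widetilde\phi}\bigr)$ solves the truncated system \eqref{eq:extended_pde_trunc}. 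The key ingredient is Proposition~\ref{prop:left-right-technical}: the adjoint products $\dualRightZ{\mathfrak{y}^M}{\widetilde{\mathfrak{y}}^N}$ and $\dualRightZ{\widetilde{\mathfrak{y}}^N}{\mathfrak{y}^M}$ are supported on tensor levels $1, \dots, N-1$ and $1, \dots, M-1$ respectively, while $\dualLeftZ{\bar{\widetilde\phi}}{\mathfrak{y}^M}$ and $\dualLeftZ{\bar\phi}{\widetilde{\mathfrak{y}}^N}$ depend on $\bar{\widetilde\phi}$ and $\bar\phi$ only through their $\pi_{(0,M-1)}$- and $\pi_{(0,N-1)}$-projections and are themselves supported in $T^{M-1}(V)$ and $T^{N-1}(V)$. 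Thus the projections close at the stated levels; uniqueness from the first step then forces the identification.

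With the identification in hand, the error estimate follows by a bilinear splitting,
\begin{align*}
u(s,t) - w^{M,N}(s,t) = \langle \esig(s) - \esig^M(s),\, \widetilde{\esig}(t) \rangle + \langle \esig^M(s),\, \widetilde{\esig}(t) - \widetilde{\esig}^N(t) \rangle.
\end{align*}
Applying the dual pairing with exponents $(1,\infty)$, using $\Vert \widetilde{\esig}(t) \Vert_\infty \le \Vert \widetilde{\esig}(t) \Vert_1 \le \widetilde C_t$ and $\Vert \esig^M(s) \Vert_\infty \le C_s$ from Proposition~\ref{prop:free_dev_estimates}.\ref{itm:grnwld_estimate}, together with the inner-truncation estimate Proposition~\ref{prop:free_dev_estimates}.\ref{itm:inner_truncation}, namely $\Vert \esig(s) - \esig^M(s) \Vert_1 \le C_s\, \onevar{\mathfrak{y} - \mathfrak{y}^M}{s}$ and its symmetric analogue, yields the claimed bound. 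The continuous-case statement is then immediate: a continuous inhomogeneous L\'evy process has vanishing L\'evy measure, so its characteristic velocity $\mathfrak{y} = b + \tfrac12 a$ is supported on tensor levels $1$ and $2$; hence $\mathfrak{y} = \mathfrak{y}^{M_0}$ for some minimal $M_0 \le 2$ (similarly $\widetilde{\mathfrak{y}} = \widetilde{\mathfrak{y}}^{N_0}$ with $N_0 \le 2$), the error bound vanishes identically, and $w^{M_0, N_0}(s,t)$ recovers the expected signature kernel exactly. The main obstacle I anticipate is the bookkeeping in the identification step, where one must verify that the specific truncation indices $P$, $P-1$, $M$, $N$ appearing in \eqref{eq:extended_pde_trunc} really match those arising when the projection of \eqref{eq:extended_pde} (with velocities $\mathfrak{y}^M, \widetilde{\mathfrak{y}}^N$) is made onto $T^{N-1}(V)$ and $T^{M-1}(V)$; this is a mechanical but somewhat tedious application of Proposition~\ref{prop:left-right-technical} to each summand.
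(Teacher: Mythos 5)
Your proposal tracks the paper's proof quite closely in the identification and error-estimate steps, so let me focus on where you diverge and on one factual slip.

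For existence and uniqueness of the truncated system you propose a direct Picard iteration combined with a two-parameter Gr\"onwall estimate. This is a legitimate route and the paper explicitly acknowledges it (``one could conclude uniqueness by verifying that the a priori estimate in Lemma~\ref{lem:apriori} is also valid for the truncated system''), but the paper itself instead \emph{lifts} any solution $(w,f,\widetilde f)$ of \eqref{eq:extended_pde_trunc} back to a solution of the infinite-dimensional system \eqref{eq:extended_pde} by a Picard scheme in $\TT^1$, then invokes the already established uniqueness from Theorem~\ref{thm:main}. Your route is arguably more self-contained for a reader who only cares about the truncated system; the paper's route recycles the work already done for Lemma~\ref{lem:apriori}. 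Both are fine. The identification step — applying Theorem~\ref{thm:main} to $(\mathfrak y^M,\widetilde{\mathfrak y}^N)$ and projecting via Proposition~\ref{prop:left-right-technical} — is exactly what the paper does; note the paper abstracts the level bookkeeping you flag as tedious into one clean intermediate lemma (the relation \eqref{eq:dual_truncation_relation}, i.e.\ $\dualLeftZ{\bx}{\by}=\dualLeftZ{\pi_{(0,K-1)}\bx}{\by}\in T^{K-1}(V)$ for $\by\in T^K(V)$), which is worth isolating explicitly if you write this out. Your error-estimate argument via the bilinear splitting and $|\langle\bx,\by\rangle|\le\Vert\bx\Vert_1\Vert\by\Vert_\infty\le\Vert\bx\Vert_1\Vert\by\Vert_1$, combined with Proposition~\ref{prop:free_dev_estimates}.\ref{itm:grnwld_estimate} and \ref{itm:inner_truncation}, is the same as the paper's.

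One genuine error: in the continuous case you assert that the characteristic velocity $\mathfrak y = b + \tfrac12 a$ is ``supported on tensor levels $1$ and $2$'' and hence $M_0\le 2$. This is only true when the process is $\R^d$-valued (step-$1$ case). For a $\mathfrak g^N(V)$-valued inhomogeneous L\'evy process, $b$ takes values in $\mathfrak g^N(V)\subset T^N(V)$ (tensor levels $1,\dots,N$) and $a$ in $\mathfrak S^N_+(V)\subset T^{2N}(V)$ (levels $2,\dots,2N$), so $\mathfrak y = \pi_{(0,2N)}\mathfrak y$ and in general $M_0$ can be as large as $2N$. The conclusion you need — that a finite minimal truncation level $M_0$ exists so that the error bound vanishes — still holds, but the stated numerical bound is wrong and should be replaced by $M_0\le 2N$ with $N$ the nilpotency step.
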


\begin{example}
    Put forward in \cite{bellingeri2022smooth}, any smooth group-valued path $X:[0,\infty) \to G^N(V)$ is the development of a smooth path $\lvyalg: [0,\infty) \to \mathfrak{g}^N(V)$ and is called a \emph{smooth rough path} (see also Section~\ref{sec:free_developments}).
    Then $\lvyalg \sim \mathscr{L}^N(\dot{\lvyalg}, 0, 0)$, where $\dot\lvyalg$ is the diagonal derivative of $X$.
    In this case, Theorem~\ref{eq:extended_pde_trunc} recovers the PDE--ODE system for the signature kernel presented in \cite[Theorem~5.3]{lemercier2024high}.
\end{example}

\begin{example}\label{exmpl:gauss_jumps}
    Consider the case where $\gamma \in \mathcal{L}(0, 0, K)$ is a compound Poisson process with centered Gaussian jumps, i.e., $K_t(\cdot) = \lambda_t \mu_t(\cdot)$ where $\lambda:[0,\infty)\to\R_+$ is a (locally) integrable intensity and $\mu_t = \mathcal{N}(0, \Sigma_t)$ with $\Sigma_t\in \mathfrak{S}_+^1(V)$, such that the spectral norm $\sigma_t^2 := \sup_{0 \le s \le t}\lambda_{\max}(\Sigma_s) < \infty$ for all $t\ge0$.
    Then, for the characteristic velocity, for all $M\in\N_{\ge1}$,
    \begin{align*}
        \int_0^t \Vert \mathfrak{y}(s) - \pi_{(0,2M)} \mathfrak{y}(s)\Vert_1 \dd{s} 
        &\le \int_0^t \int_V \Vert \exp_\otimes( x) - \pi_{(0,2M)}\exp_\otimes( x)\Vert_1 K_s(\dd x)\dd{s} \\
        &\le \int_0^t \frac{1}{(2M)!}\E_{\xi\sim \mathcal{N}(0,\Sigma_s)}\big[e^{|\xi|}|\xi|^{2M}\big]\lambda_s\dd{s} \\
        &\le \frac{M^{\frac{d-1}{2}}}{M!}C e^{\sigma_t^{2}}\sigma_t^{2M}\int_0^t\lambda_s\dd{s},
    \end{align*}
    where the first two inequalities follow from Proposition~\ref{prop:free_dev_estimates} and the last inequality follows from a direct computation (see Lemma~\ref{lem:gauss_mgf}) with a constant $C>0$ depending only on $d$.
    Hence, the above contributes an error of factorial decay in $|u- v^{2\mm,\nn}|$.
    In contrast, the approximation error
    $$ \Big| \big\langle \E[\Sig{\lvyalg}_{0,s}], \E[\Sig{\widetilde{\lvyalg}}_{0,t}] \big\rangle - \big\langle \pi_{(0,2M)} \E[\Sig{\lvyalg}_{0,s}], \E[\Sig{\widetilde{\lvyalg}}_{0,t}] \big\rangle \Big|$$
    in general does not decay factorially, but only at order $(M\log M)^{-1/2}\,(c\log M)^{-M}$ for some $c>0$, as can be verified in one dimension using Lemma~\ref{lem:truncation_decay} (also recall the discussion after Proposition~\ref{prop:free_dev_estimates}).
    A similar case can be made for the case of exponentially distributed jumps with geometric against super-geometric decay (cf. Lemma~\ref{lem:truncation_decay}).
\end{example}

\begin{proof}
    We first define the free developments of the truncated paths $\mathfrak{y}^{\mm}$ and $\widetilde{\mathfrak{y}}^{\nn}$ by $$\pmb{\nu} = \mathcal{S}\left(\int_0^\cdot \mathfrak{y}^{\mm}(t)\dd{t}\right), \qquad \widetilde{\pmb{\nu}} = \mathcal{S}\left(\int_0^\cdot \widetilde{\mathfrak{y}}^{\nn}(t)\dd{t}\right).$$
    Then it follows from Theorem~\ref{thm:main} that $$(w(s,t), \phi(s,t), \widetilde{\phi}(s,t)):= \big(\langle \pmb{\nu}(s), \widetilde{\pmb{\nu}}(t)\rangle,\; \dualLeftZ{\widetilde{\pmb{\nu}}(t)}{\pmb{\nu}(s)},\; \dualLeftZ{\pmb{\nu}(s)}{\widetilde{\pmb{\nu}}(t)}\big), \qquad s,t\ge0,$$
    is the unique solution to the system \eqref{eq:extended_pde} with underlying paths $\mathfrak{y}^\mm$ and $\widetilde{\mathfrak{y}}^\nn$.
    We next verify that $$(w, f, \widetilde{f}) := (w, \pi_{(0, \n)}\phi, \pi_{(0, \m)}\widetilde{\phi}),$$
    solves the system \eqref{eq:extended_pde_trunc}.
    
    To this end, we first verify that for any $\bx \in \TT_0$ and $\by \in T^\varvar(V)$ it holds
    \begin{align}\label{eq:dual_truncation_relation}
        \dualLeftZ{\bx}{\by} = \dualLeftZ{\pi_{(0,\var)}\bx}{\by} ~\in~ T^{\var}(V), \qquad \dualRightZ{\bx}{\by} = \dualRightZ{\pi_{(0,\var)}\bx}{\by} ~\in~ T^{\var}(V).
    \end{align}
    Indeed, recall that by Proposition~\ref{prop:left-right-technical} it holds $\dualLeft{\bx^{(k)}}{\by^{(n)}}\in (\R^{d})^{n-k}$ for $n\ge k$ and $\dualLeft{\bx^{(k)}}{\by^{(n)}} =0$ for $n< k$.  Hence, we have
    \begin{align*}
    \dualLeftZ{\bx}{\by} 
    &= \dualLeft{\bx}{\by} - \langle \bx, \by \rangle\\
    &= \sum_{n=1}^{\varvar} \sum_ {k=1}^n \dualLeft{\bx^{(k)}}{\by^{(n)}} - \langle \pi_{(0,\varvar)}\bx, \by \rangle, \qquad(\,\in T^{\varvar-1}(V)\,)\\
    &= \sum_{n=1}^{\varvar} \sum_ {k=1}^{n \wedge (\var)} \dualLeft{\bx^{(k)}}{\by^{(n)}} + \dualLeft{\bx^{(\varvar)}}{\by^{(\varvar)}} - \langle \pi_{(0,\varvar)}\bx, \by \rangle \\
    &= \sum_{n=1}^{\varvar} \sum_ {k=1}^{n} \dualLeft{\pi_{(0,\var)}\bx^{(k)}}{\by^{(n)}} - \langle \pi_{(0,\var)}\bx, \by \rangle \\
    &= \dualLeftZ{\pi_{(0,\var)}\bx}{\by}.
    \end{align*} 
    Using \eqref{eq:dual_truncation_relation}, we can rewrite the cross-terms as
    \begin{align*}
        \langle {\phi}(s,t), \dualRightZ{\mathfrak{y}^{\mm}(s)}{\widetilde{\mathfrak{y}}^{\nn}(t)}\rangle &= \langle {f}(s,t), \dualRightZ{\mathfrak{y}^{\nm}(s)}{\widetilde{\mathfrak{y}}^{\nn}(t)}\rangle, \\
        \langle \widetilde{\phi}(s,t), \dualRightZ{\widetilde{\mathfrak{y}}^{\nn}(t)}{\mathfrak{y}^{\mm}(s)}\rangle &= \langle \widetilde{f}(s,t), \dualRightZ{\widetilde{\mathfrak{y}}^{\nm}(t)}{\mathfrak{y}^{\mm}(s)}\rangle,
    \end{align*}
    which allows us to see directly that the equation for $w$ in \eqref{eq:extended_pde} reduces to the corresponding equation in \eqref{eq:extended_pde_trunc}.
    Next, projecting the equation in \eqref{eq:extended_pde} corresponding to $\phi$, using the fact that $\pi_{(0,\n)}: (\TT, \otimes) \to (T^{\n}(V), \otimes_\n)$ is an algebra homomorphism and once again the relation \eqref{eq:dual_truncation_relation} to verify
    $$\dualLeftZ{\widetilde{\phi}(s,t)}{{\mathfrak{y}}^\mm(s)} = \dualLeftZ{\pi_{(0, \m)}\widetilde{\phi}(s,t)}{{\mathfrak{y}}^\mm(s)} = \dualLeftZ{\widetilde{f}(s,t)}{{\mathfrak{y}}^\mm(s)},$$
    we obtain the equation in \eqref{eq:extended_pde_trunc} corresponding to $f$.
    The derivation for the equation corresponding to $\widetilde{f}$ from projecting the equation corresponding to $\widetilde{\phi}$ in \eqref{eq:extended_pde} follows analogously.

    This proves the existence of a continuous solution to \eqref{eq:extended_pde_trunc}.
    One could conclude uniqueness by verifying that the a priori estimate in Lemma~\ref{lem:apriori} is also valid for the truncated system \eqref{eq:extended_pde_trunc}.
    For completeness, we present here an alternative approach which directly uses the uniqueness of \eqref{eq:extended_pde} from Theorem~\ref{thm:main}.
    To this end, note that for any solution $(w, f, \widetilde{f})$ to \eqref{eq:extended_pde_trunc} we can consider the Picard iteration
    \begin{align*}
        \phi_{k+1}(s,t) =&~ \int_0^s \left\{ w(r,t) \mathfrak{y}^{\mm}(r) + \phi_{k}(r,t) \otimes \mathfrak{y}^{\mm}(r) + \dualLeftZ{\widetilde{\phi}_{k}(r,t)}{\mathfrak{y}^{\mm}(r)} \right\}\dd{r}. \\
        \widetilde{\phi}_{k+1}(s,t) =&~ \int_0^t \left\{ w(s,r) \widetilde{\mathfrak{y}}^\nn(r) + \widetilde{\phi}_{k}(s,r) \otimes \widetilde{\mathfrak{y}}^\nn(r) + \dualLeftZ{{\phi_{k}}(s,r)}{\widetilde{\mathfrak{y}}^\nn(r)} \right\}\dd{r},
    \end{align*}
    for $k\ge1$.
    By the projection arguments from above, we verify that, starting from $(\phi_0, \widetilde{\phi}_0) = (f, \widetilde{f})$, the iteration leaves the first $(\n)$ resp. $(\m)$-levels invariant, i.e., $$\pi_{(0,\n)}\phi_k = f,\qquad \pi_{(0,\m)}\widetilde{\phi}_k = \widetilde{f}, \qquad k\in\N.$$
    One next verifies using linearity and Proposition~\ref{prop:young} and Proposition~\ref{prop:adjoint_mul} that 
       \begin{align*}
        \Vert \phi_{k+1}(s,t) - \phi_{k} (s,t) \Vert_1  \le&~ \int_0^s \left\{ \Vert \phi_{k}(r,t) - \phi_{k-1} (r,t) \Vert_1 \Vert \mathfrak{y}(r) \Vert_1 + \Vert\widetilde{\phi}_{k}(r,t) - \widetilde{\phi}_{k-1}(r,t)\Vert_1 \Vert\mathfrak{y}(r)\Vert_1 \right\}\dd{r} \\
        \le&~ \int_0^s C_s \Vert\widetilde{\phi}_{k}(r,t) - \widetilde{\phi}_{k-1}(r,t)\Vert_1 \Vert\mathfrak{y}(r)\Vert_1 \dd{r},
    \end{align*}
    where the second inequality follows from Gr\"onwall's inequality (more precisely Lemma~\ref{lem:special_gronwal}).
    An analogous estimate holds for $\Vert \widetilde{\phi}_{k+1}(s,t) - \widetilde{\phi}_{k} (s,t) \Vert_1$, which plugged into the above estimate yields
    \begin{align*}
        \sup_{(r,v)\in[0,s]\times[0,t]}\Vert \phi_{k+1}(r,v) - \phi_{k} (r,v) \Vert_1  \le&~ \int_0^s\int_0^t C_r \widetilde{C}_v\Vert{\phi}_{k}(r,v) - {\phi}_{k-1}(r,v)\Vert_1 \Vert\mathfrak{y}(r)\Vert_1\Vert\widetilde{\mathfrak{y}}(v)\Vert_1 \dd{v}\dd{r} \\
        \le&~ \sup_{(r,v)\in[0,s]\times[0,t]}\Vert \phi_{1}(r,v) - f(r,v) \Vert_1 \frac{(C_s -1)^{k}(\widetilde{C}_t -1)^{k}}{k!},
    \end{align*}
    and similarly for $\widetilde{\phi}$.
    This readily establishes that $(\phi_k, \widetilde{\phi}_k)$ converges to a fixed point $(\phi, \widetilde{\phi})$ in $C([0,\infty)^2; \TT^1\times\TT^1)$ (continuous functions equipped with the supremum norm).
    We now verify that $(v, \phi, \widetilde{\phi})$ solves the equation \eqref{eq:extended_pde}.
    Indeed, by the construction of $(\phi, \widetilde{\phi})$ it suffices to show that $w$ satisfies the first equation in \eqref{eq:extended_pde}. 
    This readily follows by the projection argument given in the first part and the fact that by construction $\pi_{(0,\n)}\phi = f$ and $ \pi_{(0,\m)}\widetilde{\phi} = \widetilde{f}$ for all $k\in\N.$
    By uniqueness of the infinite dimensional system \eqref{eq:extended_pde}, this uniquely determines $(w,\phi,\widetilde{\phi})$, and hence also $(w,f,\widetilde{f})$.

    Now denote $\eta = \int_0^\cdot \mathfrak{y}(t)\dd{t}$ and  $\widetilde{\eta} = \int_0^\cdot \widetilde{\mathfrak{y}}(t)\dd{t}$, and by $u(s,t) = \langle \mathcal{S}(\eta)_s, \mathcal{S}(\widetilde{\eta})_t\rangle$ the solution to the augmented system \eqref{eq:extended_pde}.
    By construction and uniqueness of the solution $w^{\mm,\nn}$ to \eqref{eq:extended_pde_trunc} above it now follows with the triangle inequality
    \begin{align*}
        |u(s,t) - w^{\mm,\nn}(s,t)| 
        &=  \left\vert \langle \mathcal{S}(\eta)_s, \mathcal{S}(\widetilde{\eta})_t \rangle - \langle \mathcal{S}(\pi_{(0,\mm)}\eta)_s, \mathcal{S}(\pi_{(0,\nn)}\widetilde{\eta})_t \rangle  \right\vert \\
        &\le  \Vert \mathcal{S}(\eta)_s \Vert_1\Vert \mathcal{S}(\widetilde{\eta})_t - 
        \mathcal{S}(\pi_{(0,\nn)}\widetilde{\eta})_t\Vert_1 + \Vert \mathcal{S}(\pi_{(0,\nn)}\widetilde{\eta})_t \Vert_1 \Vert \mathcal{S}(\eta)_s  - 
        \mathcal{S}(\pi_{(0,\mm)}\eta)_s \Vert_1,
    \end{align*}
    and the final error estimate directly follows by applying Proposition~\ref{prop:free_dev_estimates}.\ref{itm:grnwld_estimate} and \ref{itm:inner_truncation}.
    
    Finally, in case $\mathfrak{y}$ is the characteristic velocity of a continuous inhomogeneous L\'evy process $\lvyalg$, i.e., $\lvyalg \sim \mathscr{L}^\mm(b, a, 0)$ for some $\mm \in \N$, then clearly by definition of $\mathfrak{y}$ in \eqref{def:frak_eta} it holds $\mathfrak{y} = \pi_{(0,2\mm)} \mathfrak{y}$ hence there exists a minimal truncation level $\mm_0\in\N$ such that $\mathfrak{y} = \pi_{(0,\mm_0)} \mathfrak{y}$.
    Analogously, since $\widetilde{\mathfrak{y}}$ is the characteristic velocity of a continuous inhomogeneous L\'evy process there exists $\nn_0 \in \N$ such that $\widetilde{\mathfrak{y}} = \pi_{(0,\nn_0)} \widetilde{\mathfrak{y}}$.
    From Theorem~\ref{thm:main} we have that $u(s,t) = \langle \E[\Sig{\lvyalg}_{0,s}], \E[\Sig{\widetilde{\lvyalg}}_{0,t}] \rangle$ solves the augmented system \eqref{eq:extended_pde}.
    Now using the above estimate we readily verify that the error $|u(s,t) - w^{N,M}(s,t)|$ vanishes uniformly for higher truncation levels $(N,M)$ starting from $(N,M) = (N_0,M_0)$.
    
\end{proof}

\subsection{Second level characteristics}

We now specialize to the case where the characteristic velocities are at most of second tensor level.
In this case, the adjoint multiplication can be expressed using matrix multiplication, resulting in remarkably accessible formulas.
We use the following notation for the matrix multiplication:
\begin{align*}
\by.x := \sum_{i=1}^d e_i \sum_{j=1}^d \by^{ij} x^{j} \;\in V,\qquad
\by.\bx := \sum_{i,k=1}^d e_i\otimes e_k \sum_{j=1}^d \by^{ij}\bx^{jk} \;\in V^{\otimes 2}.
\end{align*}
Next recall that the free step-$2$ nilpotent Lie algebra decomposes as
$$\mathfrak{g}^2(V) = V \oplus \mathrm{Anti}(V),$$
$\mathrm{Anti}(V)\subset V^{\otimes 2}$ denotes the subspace of antisymmetric level-2 tensors (i.e., $\mathrm{Anti}(V)=\{\mathfrak{a}\in V^{\otimes 2}: \mathfrak{a}^\top=-\mathfrak{a}\}$).
Recall also from Section~\ref{sec:inhom_levy} that $\mathfrak{S}_+^1(V) \subset V^{\otimes 2}$ denotes the set of positive semidefinite level-2 tensors.
A general inhomogeneous L\'evy-process $\gamma$ with at most 2nd level characteristic velocity is of the form 
\begin{align}\label{eq:second_level_levy}
    \gamma = \int_0^\cdot (b(t) + \mathfrak{a}(t)) \dd{t} + \sum_{k=1}^m\int_0^\cdot \sigma_k(t) \dd B^k_t,
\end{align}
where $b: [0, \infty) \to V$ and $\mathfrak{a}: [0, \infty) \to \mathrm{Anti}(V)$ are integrable, $B$ is $m$-dimensional Brownian motion and $\sigma_k:[0,\infty)\to V$ ($k=0,\dots,m$) are square-integrable.
The characteristic velocity is precisely $$\mathfrak{y}(t)=b(t)+\mathfrak{a}(t)+\tfrac12\,a(t) \;\in T^2(V), \qquad t\ge0,$$
where $a = \sum_{k=1}^m\sigma_k \otimes\sigma_k: [0,\infty) \to \mathfrak{S}_+^1(V)$.
The ``area term'' $\mathfrak{a}$ plays a role in certain applications (see Section~\ref{sec:application}).
Setting $\mathfrak{a} \equiv 0$ leads to the case of a general Brownian motion with time-dependent volatility matrix and drift; however, as seen below, this does not materially simplify the formulas.

\begin{corollary}\label{cor:level2}
    Let $\lvyalg \sim \mathscr{L}^2(b + \mathfrak{a}, a, 0)$ and $\widetilde{\lvyalg} \sim \mathscr{L}^2(\widetilde{b} + \widetilde{\mathfrak{a}}, \widetilde{a}, 0)$ as in \eqref{eq:second_level_levy} where $b, \widetilde{b}: [0,\infty)\to V$, $\mathfrak{a}, \widetilde{\mathfrak{a}}: [0,\infty) \to \mathrm{Anti}(V)$ and $a, \widetilde{a}: [0,\infty) \to \mathfrak{S}_+^1(V)$ are integrable.
    Then there exists a unique continuous solution $(u, f, \widetilde{f}): [0,\infty)^2 \to \R\times V\times V$ to the system
    \begin{equation}\label{eq:continuous_case}\left\{
    \begin{split}
            u(s,t) =&~ 1 +\int_0^s\int_0^t \bigg\{ u(r,v) \Big( \langle b(r), \widetilde{b}(v) \rangle + \langle \mathfrak{a}(r), \widetilde{\mathfrak{a}}(v) \rangle + \frac14\langle a(r), \widetilde{a}(v) \rangle\Big)\\
            &\qquad\qquad\qquad\qquad\qquad
            + \langle \widetilde{c}(v).f(r,v), b(r)\rangle 
            +  \langle c(r).\widetilde{f}(r,v), \widetilde{b}(v)\rangle\bigg\} \dd{v}\dd{r} \\
        f(s,t) =&~ \int_0^s \Big\{ u(r,t) b(r)  + c(r).\widetilde{f}(r,t)\Big\}\dd{r}\\
        \widetilde{f}(s,t) =&~ \int_0^t \Big\{ u(s,r) \widetilde{b}(r) +  \widetilde{c}(r).{f}(s,r)\Big\}\dd{r}
        \end{split},\right.
    \end{equation} 
    for all $s,t\ge0$, where $c:= \frac12 a - \mathfrak{a}$ and $\widetilde{c}:= \frac12 \widetilde{a} - \widetilde{\mathfrak{a}}$; and the 
    expected signature kernel is given by $$\langle \E[\Sig{\lvyalg}_{0,s}], \E[\Sig{\widetilde{\lvyalg}}_{0,t}] \rangle = u(s,t), \qquad s,t\ge 0.$$
\end{corollary}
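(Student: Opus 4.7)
The plan is to specialize Theorem~\ref{thm:truncated} (equivalently Theorem~\ref{thm:main}) to $M=N=2$, exploiting that both characteristic velocities live in $V\oplus V^{\otimes 2}$. By \eqref{def:frak_eta} with $K\equiv 0$, we have $\mathfrak{y} = b + \mathfrak{a} + \tfrac12 a$, supported on tensor levels $1$ and $2$, hence $\mathfrak{y} = \pi_{(0,2)}\mathfrak{y}$ (and analogously for $\widetilde{\mathfrak{y}}$); the moment condition \eqref{eq:exponential_moments} is trivial. Theorem~\ref{thm:truncated} then delivers a closed system with unknowns $(u, f, \widetilde f) \in \R\times T^1(V)\times T^1(V)$. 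That $f,\widetilde f$ may further be restricted to $V$ follows by projecting each of their equations to level $0$: the scalar term vanishes since $\mathfrak{y}^{(0)}=0$, the $\otimes$-term vanishes because both factors lie in $\TT_0$, and the $\dualLeftZ$-term has zero level-$0$ component by construction.

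The heart of the argument is then the explicit evaluation of the adjoint operations. By Proposition~\ref{prop:left-right-technical}, $\dualRightZ{\mathfrak{y}}{\widetilde{\mathfrak{y}}}\in V$ coincides with $\dualRight{b}{\widetilde{\mathfrak{a}} + \tfrac12 \widetilde{a}}$, and a direct component-wise calculation using the defining identity $\langle \bz, \bx\otimes\by\rangle=\langle \dualRight{\by}{\bz},\bx\rangle$ yields $\dualRight{b}{\widetilde{\mathfrak{a}} + \tfrac12\widetilde{a}} = (\widetilde{\mathfrak{a}} + \tfrac12\widetilde{a}).b$ in the corollary's matrix notation; analogously $\dualLeftZ{\widetilde{f}}{\mathfrak{y}} = (\mathfrak{a} + \tfrac12 a)^\top.\widetilde{f}$. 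Since $\mathfrak{a}^\top=-\mathfrak{a}$ and $a^\top=a$, the latter equals $c.\widetilde{f}$ with $c=\tfrac12 a - \mathfrak{a}$, and symmetrically the corresponding term in the $\widetilde f$-equation is $\widetilde c.f$. Moreover, the Frobenius orthogonality of antisymmetric and symmetric level-$2$ tensors gives $\langle \mathfrak{a}, \widetilde a\rangle = 0 = \langle a, \widetilde{\mathfrak{a}}\rangle$, so
\[
\langle \mathfrak{y}(r),\widetilde{\mathfrak{y}}(v)\rangle = \langle b,\widetilde b\rangle + \langle \mathfrak{a},\widetilde{\mathfrak{a}}\rangle + \tfrac14\langle a,\widetilde a\rangle,
\]
matching the scalar driver in~\eqref{eq:continuous_case}.

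Plugging these identities back into~\eqref{eq:extended_pde_trunc} and rewriting the remaining cross terms as $\langle f, (\widetilde{\mathfrak{a}}+\tfrac12\widetilde{a}).b\rangle = \langle \widetilde c.f, b\rangle$ and its counterpart $\langle c.\widetilde f,\widetilde b\rangle$, one recovers precisely~\eqref{eq:continuous_case}. Existence, uniqueness, and the kernel identity $\langle\E[\Sig{\lvyalg}_{0,s}], \E[\Sig{\widetilde{\lvyalg}}_{0,t}]\rangle = u(s,t)$ are inherited directly from Theorem~\ref{thm:truncated}. The only real obstacle is bookkeeping: correctly tracking the transposes introduced by the left/right duality and recognising the symmetric/antisymmetric orthogonality that collapses the level-$2$ inner product.
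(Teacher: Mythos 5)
Your proposal is correct and takes essentially the same route as the paper: specialize the truncated system of Theorem~\ref{thm:truncated} to $M=N=2$, observe that the level-$0$ components of $f,\widetilde f$ vanish, exploit symmetric/antisymmetric orthogonality to collapse the level-$2$ inner product, and evaluate $\dualLeftOpZ,\dualRightOpZ$ on first and second level tensors as transposed matrix products. One small imprecision: you justify the vanishing of $\pi_0(f\otimes\mathfrak{y})$ by saying ``both factors lie in $\TT_0$'', but $f\in T^1(V)$ is not a priori in $\TT_0$; the correct reason is simply that $\pi_0(f\otimes\mathfrak{y})=f^{(0)}\mathfrak{y}^{(0)}=0$ already because $\mathfrak{y}^{(0)}=0$, which then makes $f^{(0)}\equiv 0$ by the integral equation.
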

\begin{remark}
    In the presence of jumps, say $\gamma \sim \mathscr{L}^2(b + \mathfrak{a}, a, K)$ where $(K_t)_{t\ge0}$ is a family of L\'evy measures on $V$, the approximating system \eqref{eq:extended_pde_trunc} takes the same form as \eqref{eq:continuous_case} with modified coefficients replacing $b$ with $b + \E_{\xi\sim K}[\xi 1_{|\xi|\ge 1}]$ and $a$ with $a + \E_{\xi \sim K}[\xi^{\otimes 2}]$.
\end{remark}

\begin{example}
    For the case of smooth $V$-valued paths $\lvyalg$ and $\widetilde{\lvyalg}$, i.e. $b=\dot{\gamma}$, $b=\dot{\widetilde{\gamma}}$ and $\mathfrak{a} = \widetilde{\mathfrak{a}} = a = \widetilde{a} \equiv 0$, we immediately recover from the above the classical Goursat problem for the signature kernel \cite{salvi2021signature}:
    \begin{equation}\label{eq:goursat}
        \frac{\partial^2}{\partial{s}\partial{t}}u_{\alpha}(s,t) = u_{\alpha}(s,t)\alpha(s,t), \qquad u_{\alpha}(s, 0) = u_{\alpha}(0,t) = 1,
    \end{equation}
    for all $s,t\ge 0$ with $\alpha(s,t) = \langle \dot{\lvyalg}(s), \dot{\widetilde{\lvyalg}}(t)\rangle$.
\end{example}
\begin{example}\label{exmpl:time-dependent-bm}
    Consider two time-dependent Brownian motions $B \sim \mathscr{L}^1(0, a, 0)$ and $W \sim \mathscr{L}^1(0, \widetilde{a}, 0)$.
    Then, directly from the above corollary, the expected signature kernel solves the Goursat problem; more precisely,
    $$\langle \E[\Sig{B}_{0,s}], \E[\Sig{{W}}_{0,t}] \rangle = u_\alpha(s,t), \quad\text{with}\quad \alpha(s,t) = \frac{1}{4}\langle a(s), \widetilde{a}(t)\rangle, \quad s,t\ge0.$$
    In the homogeneous case, where $\alpha(\cdot,\cdot) \equiv \alpha \in \R$, the solution has the explicit form $u(s,t) = I_0(2\sqrt{\alpha st})$ where $J_0$ is the modified Bessel function of order zero\footnote{For the definition and properties of the modified Bessel functions see \cite[Section~9.6]{abramowitz1972handbook}.} (see \cite{salvi2021signature}).
\end{example}

\begin{proof}
    By assumption, the characteristic velocities $\mathfrak{y}$ and $\widetilde{\mathfrak{y}}$ respectively of $\gamma$ and $\widetilde{\gamma}$ take values in $T^2(V)$, hence the minimal truncation levels in the sense of Theorem~\ref{thm:truncated} are $\nn_0=\mm_0=2$.
    Hence, denoting by $(u, f, \widetilde{f})$ the unique solution to \eqref{eq:extended_pde_trunc} with truncation level $\nn = \mm = 2$ it follows by the aforementioned theorem that $$\langle \E[\Sig{\lvyalg}_{0,s}], \E[\Sig{\widetilde{\lvyalg}}_{0,t}] \rangle = u(s,t), \qquad s,t\ge0.$$

    It is thus left to show that the system \eqref{eq:extended_pde_trunc} with truncation level $\nn = \mm = 2$ is identical to the system \eqref{eq:continuous_case} above.
    This is directly verified using the symmetry properties of $\mathfrak{a}, \widetilde{\mathfrak{a}}, a$ and $\widetilde{a}$ (in particular $\langle a, \mathfrak{a}\rangle = \langle \widetilde{a}, \widetilde{\mathfrak{a}}\rangle \equiv 0 $); and the following properties of $\dualLeftOpZ, \dualRightOpZ$ implied by their definition and Proposition~\ref{prop:left-right-technical}:  %
    For $x, \widetilde{x} \in V$ and $\by, \widetilde{\by} \in V^{\otimes 2}$ it holds $$ \dualLeftZ{x}{\widetilde{x}} = \dualRightZ{x}{\widetilde{x}} = \dualLeftZ{\by}{\widetilde{\by}}= \dualRightZ{\by}{\widetilde{\by}} = \dualLeftZ{\by}{x} = \dualRightZ{\by}{x}=0,$$
    and 
    \begin{align*}
    \dualRightZ{x}{\by} &= \dualRight{x}{\by} =  \sum_{i=1}^d e_i\sum_{j=1}^d \by^{ij}x^{j} = \by.x,\qquad
    \dualLeftZ{x}{\by} = \dualLeft{x}{\by} = \sum_{i=1}^d e_i\sum_{j=1}^d \by^{ji}x^{j} = \by^{\top}.x.
    \end{align*}
\end{proof}

\section{Maximum mean discrepancy to the inhomogeneous Wiener measure}\label{sec:application}

\newcommand{\samples}{M}
For highly oscillatory time series, it is natural to provide the data augmented with the L\'evy area.
In \cite{lemercier2024high}, it is shown in the context of signature kernels that augmenting time series with the L\'evy area can serve as an efficient preprocessing step, reducing the dimensionality of fine-grained data.
We note, however, that the following results remain useful even when the area vanishes.

Hence, suppose we are given smoothly interpolated sample paths of the form $\lvyalg_k: [0,T] \to \mathfrak{g}^{2}(V) =  V \oplus \mathrm{Anti}(V)$ ($k=1, \dots, \samples$; $V\cong \R^d$) whose empirical distribution we want to compare, using the signature maximum mean discrepancy (signature-MMD, cf. \eqref{eq:mmd_general_def}) defined as
\begin{align}\label{eq:mmd}
    \dMMD(P, Q) = \big\Vert \E_{X\sim P}[\Sig{X}] - \E_{Y\sim Q}[\Sig{Y}]\big\Vert_2,
\end{align}
where $\Vert\cdot\Vert_2$ denotes the Hilbert norm on $\TT^2$ (see Section~\ref{sec:Tpspaces})
with the inhomogeneous Wiener measure, i.e., with the distribution of a process
\begin{align}\label{eq:time-dependent-BM}
    W_t = \sum_{i=1}^m\int_0^t \sigma_i(s)\dd B^{i}_s, \qquad t\in[0,T],
\end{align}
where $B$ is a standard $m$-dimensional Brownian motion and $\sigma_i: [0,T] \to V$ are square integrable. Set $a = \sum_{i=1}^m \sigma_i\otimes\sigma_i$ as before. 
    Applying Corollary~\ref{cor:level2}
    we obtain the following formula.
           \begin{theorem}\label{thm:dMMD_formula}
        Denote by $Q_a$ the distribution of the process $W$ defined in \eqref{eq:time-dependent-BM} and by $\widehat{P}^\samples = \frac{1}{\samples}\sum_{k=1}^\samples \delta_{\lvyalg_k}$ the empirical distribution of given %
        piecewise smooth interpolated data %
        $\lvyalg_k = \int_0^\cdot( b_k(t) + \mathfrak{a}_k(t))\dd{t}$ ($k=1, \dots, \samples$) on $[0,T]$ with values in $\mathfrak{g}^{2}(V) = V \oplus \mathrm{Anti}(V)$.
        Then it holds
        \begin{equation}\label{eq:mmd_formula}
              \dMMD\big(\widehat{P}^\samples, Q_a\big) = \sqrt{u_\alpha(T,T) -\frac{2}{\samples}\sum_{k=1}^\samples v_{k}(T,T) + \frac1{\samples^2}\sum_{j,k=1}^\samples w_{j,k}(T,T) },
        \end{equation}
        where $u_\alpha$ solves the Goursat problem \eqref{eq:goursat} with $\alpha(s,t):= \tfrac{1}{4}\langle a(s), a(t)\rangle$ and further each $v_k$ ($k=1,...,\samples$) is calculated by solving%
        \begin{equation}\label{eq:system_2nd_ord_apprx}\left\{
    \begin{split}
            &\frac{\partial^2}{\partial{s}\partial{t}} v_k(s,t) =~   
            \frac12\langle f_k(s,t), {a}(t).b_k(s)\rangle  \\
        &\frac{\partial^2}{\partial{s}\partial{t}} f_k(s,t) =~   \frac{\partial}{\partial{t}} v_k(s,t) b_k(s)  - \frac12 (\mathfrak{a}_k(s).{a}(t)).{f}_k(s,t) \\
        &\bigg.v_k(0,t) = 1, \quad v_k(s, 0) = 1 \\
        &f_k(0,t) = 0,\quad  f_k(s,0) = \int_0^s b_k(r)\dd{r}\quad \in V
        \end{split},\right.
    \end{equation} 
    and $w_{j,k}$ ($j,k=1,\dots,\samples$) is calculated by solving 
    \begin{equation}\label{eq:augmented_pde}
    \left\{
    \begin{split}
            &\frac{\partial^2}{\partial{s}\partial{t}}w_{j,k}(s,t) =~  w_{j,k}(s,t)\,c_{j,k}(s,t)
            + \langle g_{j,k}(s,t), \mathfrak{a}_k(t).b_j(s)\rangle +  \langle \widetilde{g}_{j,k}(s,t), \mathfrak{a}_j(s).{b}_k(t)\rangle\\
        &\frac{\partial}{\partial{s}} g_{j,k}(s,t) =~  w_{j,k}(s,t)\, b_j(s)  -  \mathfrak{a}_j(s).\widetilde{g}_{j,k}(s,t)\\
        &\frac{\partial}{\partial{t}} \widetilde{g}_{j,k}(s,t) =~  w_{j,k}(s,t)\, b_k(t)  - \mathfrak{a}_k(t).{g}_{j,k}(s,t)\\
        &\bigg.w_{j,k}(0,t) = 1, \quad w_{j,k}(s, 0) = 1, \\
        &g_{j,k}(0,t) = 0,\quad  g_{j,k}(s,0) = \int_0^s b_j(r)\dd{r}\quad\in V \\
        &\widetilde{g}_{j,k}(0,t) = \int_0^t b_k(r)\dd{r},\quad  \widetilde{g}_{j,k}(s,0) = 0 \quad\in V
        \end{split},\right.
    \end{equation} 
    for all $s,t\in[0,T]$
    where $c_{j,k}(s,t) = \langle b_j(s), b_k(t) \rangle + \langle \mathfrak{a}_j(s), \mathfrak{a}_k(t)\rangle$.
    \end{theorem}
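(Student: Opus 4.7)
The plan is to reduce the problem to three successive applications of Corollary~\ref{cor:level2} via the second-moment identity for the MMD. Since the signature map sends paths into the Hilbert space $(\TT^2, \langle\cdot,\cdot\rangle)$ and the signature kernel is bilinear, the formula from the introduction for MMD with a bilinear kernel on a Hilbert space applies directly and gives
\begin{align*}
    \dMMD^2(\widehat{P}^\samples, Q_a)
    &= \big\langle \E[\Sig{W}_{0,T}], \E[\Sig{W'}_{0,T}]\big\rangle - \tfrac{2}{\samples}\sum_{k=1}^\samples \big\langle \Sig{\lvyalg_k}_{0,T}, \E[\Sig{W}_{0,T}]\big\rangle \\
    &\qquad+ \tfrac{1}{\samples^2}\sum_{j,k=1}^\samples \big\langle \Sig{\lvyalg_j}_{0,T}, \Sig{\lvyalg_k}_{0,T}\big\rangle,
\end{align*}
with $W,W'$ independent copies. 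Each of the three inner products is the expected signature kernel of two processes with characteristics of second level (the smooth sample paths $\lvyalg_k$ being exactly of type $\mathscr{L}^2(b_k+\mathfrak{a}_k,0,0)$ in the sense of the smooth-rough-path example), so Corollary~\ref{cor:level2} applies. The task then reduces to identifying the three resulting systems with the ones stated in the theorem.

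The first inner product is precisely the self-pairing of $W\sim\mathscr{L}^2(0,a,0)$, for which Corollary~\ref{cor:level2} collapses to Example~\ref{exmpl:time-dependent-bm}: the ODEs for $f,\widetilde{f}$ vanish, leaving the Goursat problem \eqref{eq:goursat} with source $\alpha(s,t)=\tfrac14\langle a(s),a(t)\rangle$, whose value at $(T,T)$ is $u_\alpha(T,T)$. For the cross term, set up Corollary~\ref{cor:level2} with first process $\lvyalg_k$ (so $a=0$, $c=-\mathfrak{a}_k$) and second process $W$ (so $\widetilde{b}=\widetilde{\mathfrak{a}}=0$, $\widetilde{c}=\tfrac12 a$). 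Using symmetry of $a(t)$, the integral equation for $u$ reduces to $\partial_s\partial_t u=\tfrac12\langle f, a(t).b_k(s)\rangle$; the equations for $f,\widetilde{f}$ become $\partial_s f= u b_k(s)-\mathfrak{a}_k(s).\widetilde{f}$ and $\partial_t\widetilde{f}=\tfrac12 a(t).f$. Substituting the second into the mixed derivative of the first and using associativity of matrix multiplication, $\mathfrak{a}_k(s).(a(t).f) = (\mathfrak{a}_k(s).a(t)).f$, yields $\partial_s\partial_t f = \partial_t u\, b_k(s) - \tfrac12(\mathfrak{a}_k(s).a(t)).f$, which after identifying $v_k=u$, $f_k=f$ is exactly \eqref{eq:system_2nd_ord_apprx}.

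For the pairwise term, apply Corollary~\ref{cor:level2} with both processes being $\lvyalg_j$ and $\lvyalg_k$ (so $a=\widetilde{a}=0$, $c=-\mathfrak{a}_j$, $\widetilde{c}=-\mathfrak{a}_k$). The crucial step is to use antisymmetry of the area terms: as $\mathfrak{a}_k^\top=-\mathfrak{a}_k$, the transpose identity gives $\langle -\mathfrak{a}_k(v).f, b_j(r)\rangle=\langle f,\mathfrak{a}_k(v).b_j(r)\rangle$ and symmetrically for the second cross-term. This produces the sign pattern and source term $c_{j,k}(s,t)=\langle b_j(s),b_k(t)\rangle+\langle\mathfrak{a}_j(s),\mathfrak{a}_k(t)\rangle$ of the $w_{j,k}$ equation in \eqref{eq:augmented_pde}; the equations for $g_{j,k},\widetilde{g}_{j,k}$ are read off directly as the differential versions of the integral equations for $f,\widetilde{f}$. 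In each of the three systems the boundary conditions at $s=0$ or $t=0$ (namely $w=v=u=1$, $f_k(s,0)=\int_0^s b_k$, $\widetilde{g}_{j,k}(0,t)=\int_0^t b_k$, etc.) drop out by direct evaluation of the integral equations at the boundary.

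The work is essentially bookkeeping and no genuine obstacle arises; the only points that require care are (i) remembering to use symmetry of $a(t)$ to turn $\langle a.f,b\rangle$ into $\langle f,a.b\rangle$ in the cross term, and (ii) exploiting antisymmetry of the area components $\mathfrak{a}_j,\mathfrak{a}_k$ to flip the sign in the pairwise term. Uniqueness of each system is already supplied by Corollary~\ref{cor:level2}, so once the algebraic identifications are made the theorem follows.
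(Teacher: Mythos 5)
Your proposal is correct and follows the same route as the paper: expand $\dMMD^2$ via bilinearity of the $\TT^2$-inner product into the three types of pairings, apply Corollary~\ref{cor:level2} to each with the appropriate characteristics ($(0,a,0)$ with itself; $(b_k+\mathfrak{a}_k,0,0)$ with $(0,a,0)$; $(b_j+\mathfrak{a}_j,0,0)$ with $(b_k+\mathfrak{a}_k,0,0)$), and then massage the resulting systems into \eqref{eq:goursat}, \eqref{eq:system_2nd_ord_apprx}, \eqref{eq:augmented_pde} by eliminating $\widetilde f_k$ in the cross term and using the (anti)symmetry transpose identities. You spell out the symmetry of $a$ and the antisymmetry of the $\mathfrak{a}_j,\mathfrak{a}_k$ a bit more explicitly than the paper's ``expressing in differential form'', which is a useful clarification but not a different argument.
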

    \begin{example}
    For the case of \textit{pure area} paths, i.e., when $b = \pi_1\dot\lvyalg \equiv 0$,
    the system \eqref{eq:system_2nd_ord_apprx} trivializes, and we readily see that $v_k \equiv 1$ ($k=1, \dots, \samples$). 
    In this orthogonal situation we obtain
    $$\dMMD\big(\widehat{P}^\samples, Q_a\big)^2 =u_{\alpha}(T,T) - {2} + \frac{1}{\samples^2}\sum_{j,k=1}^\samples u_{\alpha_{j,k}}(T,T)$$
    where $u_{\alpha_{j,k}}$ resp. $u_\alpha$ denotes the solution to the Goursat problem \ref{eq:goursat} with $\alpha_{j,k}(s,t) := \frac{1}{4}\langle \mathfrak{a}_j(s), \mathfrak{a}_k(t)\rangle$ resp. $\alpha(s,t) := \langle \alpha(s), \alpha(t)\rangle$.
\end{example}

\begin{example}
    Assuming instead that the area term vanishes, i.e. $\lvyalg = \pi_1 \lvyalg = \int_0^\cdot b(t)\dd{t}$ the system further reduces to
    \begin{align*}
        \frac{\partial^2}{\partial{s}\partial{t}}u(s,t) =&~ \frac{1}{2}\int_0^s u(r,t) \langle b(r), {a}(t).b(s)\rangle \d{r}.
    \end{align*}
    For the Brownian motion case $a(\cdot) \equiv a$ we arrive at an expression for $u$ in other ways. 
    Using the Fawcett's formula $\widetilde{\esig}(t) = \exp_\otimes(\frac{1}{2}t a)$ one can obtain the Dyson-type expansion
    \begin{align*}
        u(s,t) = 1 + \sum_{k=1}^{\infty} \frac{t^{k}}{k! 2^{k}} \int_{0\le t_1 \le \dots t_{2k} \le s} \langle b(t_1),{a}.b(t_2)\rangle \cdots \langle b(t_{2k-1}),{a}.b(t_{2k})\rangle \d{t_1}\cdots\d{t_{2k}},
    \end{align*}
    and we immediately verify consistency.
    In \cite{cass2024weighted} the above formula for the standard Brownian case $a = \Id$ was related to the Cartan development of the path $\lvyalg$ in the isometry group of the hyperbolic space, thus allowing to express $u$ in terms of a contour integral over a parameterized family of matrix-valued ODE solutions.
\end{example}
    Before presenting the proof of Theorem~\ref{thm:dMMD_formula} we discuss the consequences of the above formula for the numerical computation of the signature-MMD to the time-dependent Wiener measure and further present to explicit examples.
    The formula \eqref{eq:mmd_formula} needs to be compared with applying the standard kernel-trick approach, where one generates $m$ independent samples $(\omega_k)_{k=1,\dots, m}$ of the process $W$ (possibly augmented by an approximated L\'evy-area) and estimates $\dMMD(\mathcal{P}_m, \mathcal{W}_a)^2$ by the Monte-Carlo average
    \newcommand{\samplesW}{N}

        \begin{multline*}
            \frac{1}{\samples^2}\sum_{j,k=1}^\samples\langle \Sig{\lvyalg_j}_{0,T},\Sig{\lvyalg_k}_{0,T}\rangle -\frac{2}{\samples \samplesW}\sum_{j=1}^n\sum_{k=1}^\samplesW \langle \Sig{\lvyalg_j}_{0,T}, \Sig{\omega_k}_{0,T}\rangle \\
            +\frac{1}{\samplesW^2} \sum_{j,k=1}^\samplesW\langle \Sig{\omega_j}_{0,T},\Sig{\omega_k}_{0,T}\rangle^2.
        \end{multline*}
        Besides the Monte Carlo error introduced by sampling $W$, this estimate entails solving $(\samples(\samples-1)/2 + \samples \cdot \samplesW + \samplesW(\samplesW-1)/2)$ standard Goursat problems or, if the area terms are included, the same number of $(2d+1)$-dimensional PDE-ODE system from \cite{lemercier2024high}. 
        Clearly this can become infeasible when the number of simulated paths $\samplesW$ becomes large.
        
        Instead, using formula \eqref{eq:mmd_formula} a simulation of paths from $\mathcal{W}$ is not needed and we obtain an acuate result by solving $\samples$ coupled $(d+1)$-dimensional Goursat system for the cross-term and $\samples(\samples-1)/2+1$ standard Goursat problems or, if the area term of the data is included, $1$ standard Goursat problem and $\samples(\samples-1)/2$ PDE-ODE systems for the diagonal terms.

\begin{proof}[Proof of Theorem~\ref{thm:dMMD_formula}]
    First note that from Proposition~\ref{prop:free_dev_estimates} and Proposition~\ref{prop:levy_esig} it follows that $\Sig{\lvyalg}_{0,s}$, $\mathbb{E}[\Sig{W}_{0,t}] = \mathcal{S}(\int_{0}^\cdot a(u)\dd{u})_t \in \TT^1$, hence,  $\dMMD(\widehat{P}^\samples, Q_a)$ is indeed well-defined and by definition \eqref{eq:mmd} we have
        \begin{align*}
              \dMMD\big(\widehat{P}^\samples, Q_a\big)^2 =&~ \Big\Vert \mathbb{E}_{X\sim\widehat{P}^\samples}[\Sig{X}_{0,T}] - \mathbb{E}[\Sig{W}_{0,T}] \Big\Vert_2^2 \\
            =&~ \Big\Vert \frac{1}{\samples}\sum_{k=1}^\samples \Sig{\lvyalg_k}_{0,T} - \mathbb{E}[\Sig{W}_{0,T}] \Big\Vert_2^2 \\
            =&~ \frac{1}{\samples^2}\sum_{j,k=1}^\samples \langle \Sig{\lvyalg_j}_{0,T},\Sig{\lvyalg_k}_{0,T}\rangle -\frac{2}{\samples}\sum_{k=1}^\samples \langle \Sig{\lvyalg_k}_{0,T}, \mathbb{E}[\Sig{W}_{0,T}]\rangle \\
            &~ + \langle\mathbb{E}[\Sig{W}_{0,T}],\mathbb{E}[\Sig{W}_{0,T}] \rangle.
        \end{align*}
    As noted in Example~\ref{exmpl:time-dependent-bm} the term in last line above is given by $u_\alpha(T,T)$. 
    Further, for the terms $v_k(s,t) = \langle \Sig{\lvyalg_k}_{0,s}, \mathbb{E}[\Sig{W}_{0,t}]\rangle$ we apply Corollary~\ref{cor:level2} with characteristics $(b_k + \mathfrak{a}_k, 0 ,0)$ and $(0, a, 0)$, where the system \eqref{eq:continuous_case} simplifies to
    \begin{equation*}\left\{
    \begin{split}
            v_k(s,t) =&~ 1 +\int_0^s\int_0^t  
            \frac12\langle f_k(r,v), {a}(v).b_k(r)\rangle 
             \dd{v}\dd{r} \\
        f_k(s,t) =&~ \int_0^s \Big\{ v_k(r,t) b_k(r)  -\mathfrak{a}_k(r).\widetilde{f}_k(r,t)\Big\}\dd{r}\\
        \widetilde{f}_k(s,t) =&~ \int_0^t \frac12{a}(r).{f}_k(s,r)\dd{r}
        \end{split},\right.\quad  s,t \ge 0.
    \end{equation*} 
    Substituting the equation for $\widetilde{f}$ into the equation for $f$ and expressing everything in differential form we arrive at the system \eqref{eq:system_2nd_ord_apprx}.
    The equation for the reaming terms $w_{j,k}(s,t) = \langle\Sig{\lvyalg_j}_{0,s},\Sig{\lvyalg_k}_{0,t}\rangle$ follows directly from applying Corollary~\ref{cor:level2} with characteristics $(b_j + \mathfrak{a}_j, 0,0)$ and $(b_k + \mathfrak{a}_k, 0,0)$, expressing the resulting system \eqref{eq:continuous_case} in differential form.
    
\end{proof}

\newcommand{\etalchar}[1]{$^{#1}$}

\appendix

\section{Remainder estimates for certain moment generating functions}

In this section we provide two technical lemmas that are used to compare the approximation error of the truncated PDE method in Theorem~\ref{thm:truncated} with the error introduced by truncating the expected signature directly. The first lemma concerns the Taylor remainder of the moment-generating function of certain compound Poisson distributions and relies on known results from asymptotic analysis. The second lemma establishes a simple factorial-decay bound for the Taylor remainder of the moment-generating function of $|\xi|$, where $\xi$ is a multivariate Gaussian random vector. For functions $f,g:\N_{\ge 1}\to (0,\infty)$, we write $f(m) \sim g(m)$ if and only if $\lim_{m\to\infty} \frac{f(m)}{g(m)} = 1$.

\begin{lemma}\label{lem:truncation_decay}
For a sequence $(a_n)_{n \in \N} \subset\R_+$ define
$$r_m := \sum_{n=m}^\infty \frac{1}{n!} B_n\big(1!\,a_1,\,2!\,a_2,\,\dots,\,n!\,a_n\big) , \qquad m\in\N_{\ge 1}.$$
\begin{enumerate}[label=\arabic*.)]
    \item If $a_n = \frac{\rho^n}{n!}$ for all $n\in\N_{\ge 1}$ and some $\rho > 0$, then 
    $$r_m \sim \rho^m\frac{B_m}{m!},$$
    where $B_n = B_n(1, \dots, 1)$ are the Bell numbers. In particular, there are constants $c,\varsigma,C_\rho,\zeta>0$, such that 
    $$\frac{c}{e\sqrt{2\pi}}\frac{1}{\sqrt{m\log(m)}}\left(\frac{\varsigma\rho}{\log(m)}\right)^m ~\le~ r_m ~\le~   \frac{C_\rho}{e\sqrt{2\pi}} \frac{1}{\sqrt{m\log(m)}}\left(\frac{ \zeta\rho}{\log(m)}\right)^m.$$
     Moreover, for $m \ge m_0 \in \N$ all constants $c,\varsigma, C_\rho$ and $\zeta$ can be chosen arbitrarily close to $1$ by choosing $m_0$ large enough.
    \item If $a_n = \rho^n$ for all $n\in\N$ for some $\rho \in (0,1)$ then
    $$r_m \sim \frac{ e^{2\sqrt{m}}}{2m^{\frac34}\sqrt{\pi e}} \frac{\rho^m}{1-\rho}.$$
\end{enumerate}
\end{lemma}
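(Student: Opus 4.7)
The unifying observation is that $r_m$ is precisely the Taylor remainder at $\lambda = 1$ of the analytic function
\[ F(\lambda) := \exp\!\Big(\sum_{n\ge 1} a_n \lambda^n\Big) = \sum_{n\ge 0} \frac{\lambda^n}{n!}\, B_n\!\big(1!a_1,\dots,n!a_n\big). \]
Using the homogeneity identity $B_n(c y_1,c^2 y_2,\dots,c^n y_n) = c^n B_n(y_1,\dots,y_n)$, we identify $F$ explicitly in both cases: in case~1, $k!a_k = \rho^k$ and hence $F(\lambda) = \exp(e^{\rho\lambda} - 1)$, so the Taylor coefficients are $\rho^n B_n/n!$ (with $B_n$ the Bell numbers); in case~2, $k!a_k = k!\rho^k$ and hence $F(\lambda) = \exp\!\big(\rho\lambda/(1-\rho\lambda)\big)$, whose Taylor coefficients equal $\rho^n c_n$ with $c_n := [\lambda^n]\exp(\lambda/(1-\lambda))$. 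The task is thereby reduced to the asymptotic analysis of the tails $\sum_{n\ge m}\rho^n B_n/n!$ and $\sum_{n\ge m} \rho^n c_n$.

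For case~2, a standard saddle-point computation (see e.g.\ \cite[Ex.~VIII.4]{flajolet2009analytic}) at the saddle $\zeta_n \approx 1 - n^{-1/2}$ yields $c_n \sim e^{2\sqrt n}/(2\sqrt{\pi e}\, n^{3/4})$. In particular $c_{n+1}/c_n \to 1$, so for every $\epsilon \in (0,(1-\rho)/\rho)$ there exists $N_\epsilon$ with $(1-\epsilon)^k \le c_{m+k}/c_m \le (1+\epsilon)^k$ for all $m \ge N_\epsilon$ and $k \ge 0$. A geometric-series sandwich of $r_m/(\rho^m c_m) = \sum_{k\ge 0}\rho^k c_{m+k}/c_m$ between $1/(1-(1+\epsilon)\rho)$ and $1/(1-(1-\epsilon)\rho)$, followed by $\epsilon \downarrow 0$, gives $r_m \sim \rho^m c_m/(1-\rho)$; substituting the asymptotic for $c_m$ yields the claim.

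For case~1, the Moser--Wyman/de~Bruijn saddle-point analysis (see \cite{bruijn1958asymptotic}) of $\exp(e^{\rho\lambda}-1)$ at the saddle $\zeta_n = W(n)/\rho$, where $W$ is the principal Lambert function satisfying $W(n) e^{W(n)} = n$, gives
\[ \frac{\rho^n B_n}{n!} \;\sim\; \frac{1}{e\sqrt{2\pi n(W(n)+1)}}\left(\frac{\rho\, e^{1/W(n)}}{W(n)}\right)^{\!n}. \]
Since $B_{n+1}/((n+1)B_n) \sim 1/W(n) \to 0$, the full tail is dominated by its leading term: $r_m/(\rho^m B_m/m!) \in [1,\,(1-\rho/W(m))^{-1}]$ for $m$ large, hence $r_m \sim \rho^m B_m/m!$. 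The explicit two-sided bound then follows by rewriting the asymptotic using $W(m)/\log m \to 1$, $e^{1/W(m)} \to 1$ and $(W(m)+1)/\log m \to 1$, after observing that the discrepancy between the natural saddle-point prefactor $(\rho/W(m))^m e^{m/W(m)}$ and the target prefactor $(\rho/\log m)^m$ equals $\eta(m)^m$, where $\eta(m) := e^{1/W(m)} \log m/W(m) \to 1$.

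The main obstacle is the bookkeeping needed to see that the four constants $c,\varsigma,C_\rho,\zeta$ can be made arbitrarily close to~$1$: although $\eta(m)^m$ is super-exponential in $m$, for any fixed $\zeta > 1$ one has $\eta(m) \le \zeta$ once $m$ exceeds a threshold $m_0(\zeta)$, so $(\eta(m)\rho/\log m)^m \le (\zeta\rho/\log m)^m$ holds for $m \ge m_0$; an entirely analogous argument from below yields $\varsigma < 1$ arbitrarily close to~$1$, while $c$ and $C_\rho$ absorb the (also $1$-approaching) ratio $r_m/(\rho^m B_m/m!)$ together with the correction factor $\sqrt{(W(m)+1)/\log m}$.
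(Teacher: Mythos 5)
Your proposal is correct and follows essentially the same route as the paper: identify the coefficients $\tfrac{1}{n!}B_n(1!a_1,\dots,n!a_n)$ explicitly via the homogeneity of the Bell polynomials (giving $\rho^n B_n/n!$ in case 1 and $\rho^n\beta_n$ with $\beta_n=[\lambda^n]\exp(\lambda/(1-\lambda))$ in case 2, the latter being exactly Comtet's formula used in the paper), quote the known saddle-point asymptotics from Flajolet--Sedgewick/de Bruijn, and control the tail by the consecutive-ratio comparison (ratio $\to 0$ in case 1, $\to\rho<1$ in case 2). Your bookkeeping for converting $W(m)$ into $\log m$ inside the $m$-th power, absorbing $\eta(m)^m$ with $\eta(m)\to 1$ into a $\zeta^m$ factor, is in fact more explicit than the paper's treatment of the two-sided bound and correctly justifies the ``constants arbitrarily close to $1$'' clause.
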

\begin{proof}
    \begin{enumerate}[label=\arabic*.)]
    
        \item From the asymptotics in \cite[Proposition~VIII.3]{flajolet2009analytic} (see also \cite[Eq.~6.2.6~ff.]{bruijn1958asymptotic}) we obtain
\begin{align*}
    \frac{B_n}{n!} 
    \sim \frac{1}{e\sqrt{2\pi}}  
    \frac{1}{\sqrt{n W(n+1)}}
    \left(\frac{e^{\frac{1}{W(n+1)}}}{W(n+1)}\right)^n,
\end{align*}
where $W$ denotes the Lambert $W$ function, defined by $W(n)e^{W(n)} = n$, and satisfying $W(n)\sim \log n$ as $n\to\infty$.
It remains to show that $r_m \sim \rho^m \tfrac{B_m}{m!}$.  
To this end, note that
$$
    \alpha_n := \frac{1}{n!}\,B_n(\rho,\ldots,\rho^n) 
    = \frac{\rho^n}{n!}\, B_n. 
$$
In particular, we directly obtain the lower bound
$$
    r_m \;\ge\; \frac{\rho^m}{m!}\,B_m.
$$
On the other hand, using the asymptotic formula above, one verifies that the ratio 
$\tfrac{\alpha_{n+1}}{\alpha_n}$ converges to $0$ as $n\to\infty$.  
Hence, for any $\varepsilon>0$ we can find $m_0\in\N$ (depending on $\rho$) large enough such that
$$
    r_m \;\le\; \frac{\rho^m}{m!}\,B_m \frac{1}{1-\varepsilon}.
$$

    \item In this case, we use \cite[Theorem~3.3.B]{comtet1974advanced}
\begin{align*}
    \alpha_n := \frac{1}{n!} B_n(1!\rho, \cdots, n!\rho^n) 
    = \beta_n \rho^n, 
    \qquad 
    \beta_n := \sum_{k=1}^n \frac{1}{k!} \binom{n-1}{k-1}.
\end{align*}
The coefficients $(\beta_n)_{n\ge 1}$ have the known asymptotic behavior \cite[Proposition~VIII.4]{flajolet2009analytic}
$$
    \beta_n \sim \frac{e^{2\sqrt{n}}}{2 n^{3/4} \sqrt{\pi e}}.
$$
We verify that the ratio $\frac{\alpha_{n+1}}{\alpha_{n}}$ converges to $\rho \in (0,1)$, and the conclusion follows similarly to case~1.

    \end{enumerate}
\end{proof}

\begin{lemma}\label{lem:gauss_mgf}
    Let $\Sigma\in\mathfrak{S}^1_+(\R^d)$ be a positive semidefinite matrix and denote by $\lambda_{\max}(\Sigma)$ the largest eigenvalue.
    It holds for all $M\in\N$
    \begin{align*}
        \E_{\xi\sim \mathcal{N}(0,\Sigma)}\big[e^{|\xi|}|\xi|^{2M}\big] 
        ~\le~ e^{\lambda_{\max}(\Sigma)}\lambda_{\max}(\Sigma)^{M}\E_{\xi\sim \mathcal{N}(0,I_d)}\big[e^{\frac{1}{4}|\xi|^2}|\xi|^{2M}\big].
    \end{align*}
    Furthermore, it holds
    \begin{align*}
         \frac{1}{(2M)!}\E_{\xi\sim \mathcal{N}(0,I_d)}\big[e^{\frac{1}{4}|\xi|^2}|\xi|^{2M}\big] \sim \frac{2^{\frac{d}{2}}\sqrt{\pi}}{\Gamma(\frac{d}{2})}\frac{M^{\frac{d-1}{2}}}{M!},
    \end{align*}
    where $\Gamma$ denotes the Gamma function \cite[Chapter~6]{abramowitz1972handbook}.
    \end{lemma}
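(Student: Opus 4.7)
For the first inequality, the plan is to standardize and then linearize $|\xi|$. Write $\xi = \Sigma^{1/2}\eta$ with $\eta\sim\mathcal{N}(0,I_d)$, so that
\[
  |\xi|^2 = \eta^{\top}\Sigma\eta \le \lambda_{\max}(\Sigma)\,|\eta|^2,
\]
which immediately gives $|\xi|^{2M}\le \lambda_{\max}(\Sigma)^{M}|\eta|^{2M}$ and $|\xi|\le \sqrt{\lambda_{\max}(\Sigma)}\,|\eta|$. To turn the linear exponent into a quadratic one, I would apply the AM--GM inequality $2ab\le a^2+b^2$ with $a=\sqrt{\lambda_{\max}(\Sigma)}$ and $b=|\eta|/2$, yielding
\[
  \sqrt{\lambda_{\max}(\Sigma)}\,|\eta| \le \lambda_{\max}(\Sigma) + \tfrac{1}{4}|\eta|^2.
\]
Exponentiating and combining the two bounds gives the first claim by taking expectation.

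For the second claim, the plan is to reduce to a one-dimensional integral via spherical symmetry and then identify a gamma integral. Using polar coordinates in $\R^d$ and the surface area $\omega_{d-1}=2\pi^{d/2}/\Gamma(d/2)$, one has
\[
  \E_{\xi\sim\mathcal{N}(0,I_d)}\!\big[e^{|\xi|^2/4}|\xi|^{2M}\big]
  = \frac{2^{1-d/2}}{\Gamma(d/2)}\int_0^\infty e^{-r^2/4}\,r^{2M+d-1}\,\dd r.
\]
The substitution $u = r^2/4$ turns the right-hand side into a standard $\Gamma$-integral, leading after routine simplification to
\[
  \E_{\xi\sim\mathcal{N}(0,I_d)}\!\big[e^{|\xi|^2/4}|\xi|^{2M}\big]
  = \frac{2^{2M+d/2}\,\Gamma(M+d/2)}{\Gamma(d/2)}.
\]

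It remains to extract the announced asymptotic from this closed form. Here I would invoke Legendre's duplication formula $\Gamma(2M+1) = 2^{2M}\pi^{-1/2}\,\Gamma(M+\tfrac12)\,M!$ to rewrite $(2M)!$, so that
\[
  \frac{1}{(2M)!}\E_{\xi\sim\mathcal{N}(0,I_d)}\!\big[e^{|\xi|^2/4}|\xi|^{2M}\big]
  = \frac{2^{d/2}\sqrt{\pi}}{\Gamma(d/2)}\cdot\frac{1}{M!}\cdot\frac{\Gamma(M+d/2)}{\Gamma(M+\tfrac12)}.
\]
Finally, the standard Stirling asymptotic $\Gamma(M+a)/\Gamma(M+b)\sim M^{a-b}$ with $a=d/2$, $b=1/2$ yields the factor $M^{(d-1)/2}$ and concludes the proof. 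The only nontrivial step is the gamma-function bookkeeping via Legendre's formula; the rest reduces to a change of variables and Stirling.
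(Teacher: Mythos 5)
Your argument is correct and lands on exactly the same closed form and asymptotic as the paper. The first inequality is handled identically (standardize by $\Sigma^{1/2}$, bound $|\Sigma^{1/2}\eta|$ by $\sqrt{\lambda_{\max}}\,|\eta|$, linearize via $\sqrt{\lambda}\,|\eta|\le \lambda+\tfrac14|\eta|^2$), and the final steps (Legendre duplication to rewrite $(2M)!$, then Stirling for $\Gamma(M+d/2)/\Gamma(M+\tfrac12)\sim M^{(d-1)/2}$) are the same as in the paper.

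The one place you take a genuinely different route is the evaluation of $\E_{\xi\sim\mathcal{N}(0,I_d)}\!\bigl[e^{|\xi|^2/4}|\xi|^{2M}\bigr]$: you pass to polar coordinates, absorb $e^{|\xi|^2/4}$ into the Gaussian weight to get a single integrand $e^{-r^2/4}r^{2M+d-1}$, and recognize a $\Gamma$-integral after the substitution $u=r^2/4$. The paper instead observes that multiplying by $|\xi|^{2M}$ is the $M$-th $\theta$-derivative of $e^{\theta|\xi|^2}$ at $\theta=\tfrac14$, uses the explicit $\chi^2$-type moment generating function $\E[e^{\theta|\xi|^2}]=(1-2\theta)^{-d/2}$, and differentiates; this yields the rising factorial $\prod_{k=0}^{M-1}(d/2+k)$, giving the same $2^{2M+d/2}\,\Gamma(M+d/2)/\Gamma(d/2)$. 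Both derivations are standard and equally rigorous; the MGF route avoids spherical coordinates and is marginally more algebraic, while your polar-coordinate computation is more self-contained and makes the domain of integrability ($\theta<\tfrac12$) transparent without appealing to the $\chi^2$ MGF. No gaps.

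Minor bookkeeping check (all correct): your Legendre identity $\Gamma(2M+1)=2^{2M}\pi^{-1/2}\Gamma(M+\tfrac12)\,M!$ is the $z=M+\tfrac12$ instance of the duplication formula, and the $u=r^2/4$ substitution indeed produces $2^{2M+d-1}\Gamma(M+d/2)$, which together with the $\tfrac{2^{1-d/2}}{\Gamma(d/2)}$ prefactor gives $\tfrac{2^{2M+d/2}\Gamma(M+d/2)}{\Gamma(d/2)}$ as claimed.
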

    \begin{proof}
        Set $\lambda = \lambda_{\max}(\Sigma)$. The first estimate follows simply by the fact that $$|\Sigma^{\frac12}x| ~\le~ \sqrt{\lambda}|x| ~\le~ \frac{1}{4}|x|^2 + \lambda, \qquad x\in \R^d.$$
        For the second part, first note that
        \begin{align*}
            \E_{\xi\sim \mathcal{N}(0,I_d)}\big[e^{\frac{1}{4}|\xi|^2}|\xi|^{2M}\big]
            &= \frac{\dd^M}{(\dd\theta)^M}\E_{\xi\sim \mathcal{N}(0,I_d)}\big[e^{\theta|\xi|^2}\big]\Big\vert_{\theta=\frac{1}{4}}\\
            &= \frac{\dd^M}{(\dd\theta)^M}\frac{1}{(1-2\theta)^{\frac{d}{2}}}\E_{\xi\sim \mathcal{N}(0,(1-2\theta)^{-1} I_d)}\big[1\big]\Big\vert_{\theta=\frac{1}{4}}.\\
            &= 4^{M}2^{\frac{d}{2}}\prod_{k=0}^{M-1}\left(\frac{d}{2} + k\right)
            \\
            &= 2^{2M+ \frac{d}{2}}\frac{\Gamma(\frac{d}{2} + M)}{\Gamma(\frac{d}{2})}.
        \end{align*}
        Finally, using the duplication formula \cite[6.1.18]{abramowitz1972handbook}
        we have
        \begin{align*}
            2^{2M+\frac{d}{2}}\frac{\Gamma(\frac{d}{2} + M)}{\Gamma(\frac{d}{2})(2M)!} = \frac{2^{\frac{d}{2}}\sqrt{\pi}}{\Gamma(\frac{d}{2})}\frac{\Gamma(\frac{d}{2} + M)}{\Gamma(\frac{1}{2} +M)}\frac{1}{M!}
        \end{align*}
        and the rest follows from the well-known asymptotic behavior of the ratios of gamma functions \cite[6.1.46]{abramowitz1972handbook}. 
    \end{proof}

\section{An a priori bound}

\begin{lemma}\label{lem:apriori}
Let $(u, \phi, \widetilde{\phi}):[0,T]^2 \to \R \times \TT^1 \times \TT^1$ be a continuous solution to the integral equation
    \begin{equation*}
        \left\{
    \begin{split}
        u(s,t) =&~ u(s,0) + u(0,t) -u(0,0) \\&~+\int_0^s\int_0^t \Big\{ u(r,v) \langle \mathfrak{y}(r), \widetilde{\mathfrak{y}}(v)\rangle + \langle \phi(r,v), \dualRightZ{\mathfrak{y}(r)}{\widetilde{\mathfrak{y}}(v)}\rangle+  \langle \widetilde{\phi}(r,v), \dualRightZ{\widetilde{\mathfrak{y}}(v)}{\mathfrak{y}(r)}\rangle\Big\} \dd{v}\dd{r} \\
        \phi(s,t) =&~ \int_0^s \left\{ u(r,t) \mathfrak{y}(r) + \phi(r,t) \otimes \mathfrak{y}(r) + \dualLeftZ{\widetilde{\phi}(r,t)}{\mathfrak{y}(r)} \right\}\dd{r}\\
        \widetilde\phi(s,t) =&~ \int_0^t \left\{ u(s,r) \widetilde{\mathfrak{y}}(r) + \widetilde\phi(s,r) \otimes \widetilde{\mathfrak{y}}(r) + \dualLeftZ{{\phi}(s,r)}{\widetilde{\mathfrak{y}}(r)} \right\}\dd{r}
        \end{split},\right.
    \end{equation*}
    for all $s,t \in[0,T]$ where $\mathfrak{y}, \widetilde{\mathfrak{y}}: [0,T]\to \TT_0$ with
$$C_t :=  \int_0^t \Vert {\mathfrak{y}}(r)\Vert_1  \dd{r} <\infty , \qquad \widetilde{C}_t := \int_0^t \Vert \widetilde{\mathfrak{y}}(r)\Vert_1  \dd{r} <\infty, \qquad t\in[0,T].$$ 
Then the following estimate holds:
 \begin{align*}
 	|u(s,t)| + \Vert \phi(s,t) \Vert_1 \le \psi(C_s, \widetilde{C}_t) \sup_{(r,v)\in[0,s]\times[0,t]} \big\vert u(r,0) +  u(0,v) - u(0,0)\big\vert,
 \end{align*}
 and similarly
  \begin{align*}
 	|u(s,t)| + \Vert \widetilde\phi(s,t) \Vert_1 \le \psi(C_s, \widetilde{C}_t) \sup_{(r,v)\in[0,s]\times[0,t]} \big\vert u(r,0) +  u(0,v) - u(0,0)\big\vert,
 \end{align*}
 for all $s,t \in[0,T]$, 
 where $\psi(x,y) =e^{x + y} I_0(2 \sqrt{xy})$
 and $I_0$ is the zero-order modified Bessel function of the first kind.
 \end{lemma}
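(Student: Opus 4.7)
The plan is a two-dimensional Gronwall-type comparison, in the spirit of Snow's inequality \cite{snow1969two, snow1972gronwall}. I would reduce the coupled vector-valued system to a scalar 2D integral inequality and then dominate it by the explicit solution of a constant-coefficient Goursat problem, which produces the modified Bessel function $I_0$ naturally.

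First I would take $|\cdot|$ of the $u$-equation and $\|\cdot\|_1$ of the $\phi$- and $\widetilde\phi$-equations, pulling norms under the integrals. The three types of products appearing in the integrands are controlled using H\"older's inequality $|\langle\bx,\by\rangle|\le\|\bx\|_1\|\by\|_1$, Young's inequality $\|\bx\otimes\by\|_1\le\|\bx\|_1\|\by\|_1$ (Proposition~\ref{prop:young}), and the continuity bounds $\|\dualLeftZ{\bx}{\by}\|_1,\,\|\dualRightZ{\bx}{\by}\|_1 \le \|\bx\|_1\|\by\|_1$ implicit in the proof of Proposition~\ref{prop:adjoint_mul}. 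Writing $\Theta:=|u|+\|\phi\|_1+\|\widetilde\phi\|_1$ and $S_0:=\sup_{(r,v)\in[0,s]\times[0,t]}|u(r,0)+u(0,v)-u(0,0)|$, this yields
\begin{align*}
|u(s,t)| &\le S_0 + \iint_{[0,s]\times[0,t]}\Theta(r,v)\,\dot C_r\dot{\widetilde C}_v\,dv\,dr,\\
\|\phi(s,t)\|_1 &\le \int_0^s\Theta(r,t)\dot C_r\,dr,\qquad \|\widetilde\phi(s,t)\|_1 \le \int_0^t\Theta(s,v)\dot{\widetilde C}_v\,dv.
\end{align*}

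Next I would perform the time change $x:=C_s$, $y:=\widetilde C_t$ and set $\bar\Theta(x,y):=\Theta(s(x),t(y))$, to obtain a constant-coefficient integral inequality on $[0,C_s]\times[0,\widetilde C_t]$. A monotone Picard iteration starting from the constant $S_0$ shows that $\bar\Theta$ is dominated pointwise by the unique continuous solution $W$ of the associated scalar integral equation with equality. Differentiating twice turns this into a hyperbolic Goursat problem of the form $W_{xy}=W+W_x+W_y$ with boundary data $W(x,0)=W(0,y)=S_0$; the substitution $W=S_0 e^{x+y}\chi$ reduces it to a pure Bessel equation $\chi_{xy}=c\chi$ with $\chi\equiv 1$ on the axes, whose unique solution is $I_0(2\sqrt{cxy})$. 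Undoing the time change yields $\Theta\le S_0\psi(C_s,\widetilde C_t)$, and since $|u|+\|\phi\|_1\le\Theta$ the first claimed bound follows; the bound on $|u|+\|\widetilde\phi\|_1$ then follows by the symmetry exchanging $s\leftrightarrow t$ and $\mathfrak{y}\leftrightarrow\widetilde{\mathfrak{y}}$.

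The hard part will be extracting the \emph{sharp} constant $c=1$ in the Bessel argument. Naively summing the three pointwise estimates collapses too much information and yields a suboptimal Bessel factor with $c>1$. To match the exact form $\psi(x,y)=e^{x+y}I_0(2\sqrt{xy})$ of the statement, one must exploit the asymmetry that the $\phi$-equation is a pure $s$-integral while the $\widetilde\phi$-equation is a pure $t$-integral, treating the coupled $3\times 3$ linear Volterra system directly rather than collapsing to the single scalar $\Theta$. A careful resolvent computation for that triangular system then identifies precisely the Bessel-exponential $\psi$ as its kernel, from which the two claimed one-sided estimates fall out.
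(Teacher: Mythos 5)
Your overall architecture (reduce to a two-dimensional Gr\"onwall inequality of Snow type and identify the Riemann function as a Bessel--exponential) is the right one, and you correctly diagnose the obstruction: taking norms of all three equations and summing into $\Theta=|u|+\Vert\phi\Vert_1+\Vert\widetilde\phi\Vert_1$ yields the comparison problem $W_{xy}=W+W_x+W_y$, whose solution is $S_0\,e^{x+y}I_0(2\sqrt{2xy})$, i.e.\ the constant $c=2$ rather than $c=1$. But the step you defer to --- ``a careful resolvent computation for that triangular system'' --- is precisely where the proof lives, and it cannot be carried out on the normed system you have set up. Indeed, the extremal solution of your three scalar inequalities (take equality everywhere, $\dot C\equiv\dot{\widetilde C}\equiv 1$, $S_0=1$) gives $\Theta(x,y)=e^{x+y}I_0(2\sqrt{2xy})$, and even after subtracting the $\widetilde\phi$-component one finds numerically at $x=y=1$ a value for $|u|+\Vert\phi\Vert_1$ exceeding $e^{2}I_0(2)$. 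So no manipulation of the normed inequalities alone can produce the claimed bound; information is irretrievably lost at the moment you estimate the $u$-equation termwise.

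What the paper does instead is exploit an \emph{exact algebraic cancellation before} taking any norms. Pairing the $\widetilde\phi$-equation with $\mathfrak{y}(r)$ and using the adjoint relations $\langle \dualLeftZ{a}{b},c\rangle=\langle b, a\otimes c\rangle$ (for $\pi_0c=0$) shows that
\begin{align*}
\int_0^s\big\langle\widetilde\phi(r,t),\mathfrak{y}(r)\big\rangle\,\dd r \;=\; u(s,t)-u(s,0)-u(0,t)+u(0,0),
\end{align*}
i.e.\ the entire double integral in the $u$-equation. Splitting $\dualLeftZ{\widetilde\phi}{\mathfrak{y}}=\dualLeft{\widetilde\phi}{\mathfrak{y}}-\langle\widetilde\phi,\mathfrak{y}\rangle$ in the $\phi$-equation and substituting this identity shows that the combined quantity $\phi+u\in\TT^1$ (whose norm is exactly $|u|+\Vert\phi\Vert_1$, since $\phi\in\TT^1_0$) satisfies a genuine one-dimensional Volterra equation in $s$, driven only by $\dualLeft{\widetilde\phi}{\mathfrak{y}}$. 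Only \emph{then} does one estimate: a 1D Gr\"onwall in $s$ for $\Vert\phi+u\Vert_1$, a 1D Gr\"onwall in $t$ for $\Vert\widetilde\phi\Vert_1$, and a substitution of one into the other produce a two-dimensional inequality whose kernel is exactly $e^{-C_r-\widetilde C_v}\Vert\mathfrak{y}(r)\Vert_1\Vert\widetilde{\mathfrak{y}}(v)\Vert_1$, so Snow's inequality delivers $I_0(2\sqrt{C_s\widetilde C_t})$ with the sharp constant. You should incorporate this identity; without it your argument proves only the weaker bound with $I_0(2\sqrt{2\,C_s\widetilde C_t})$, which does not suffice for the statement as given. (A minor additional slip: with your boundary terms the comparison function satisfies $W(x,0)=S_0e^{x}$, not $W(x,0)=S_0$.)
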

\begin{proof}
First note that suitably integrating the equation for $\widetilde{\phi}$ and comparing with the equation for $u$ we see that
\begin{align*}
    \int_0^s \langle \widetilde{\phi}(r,t), \mathfrak{y}(r)\rangle \dd{r}  &= \int_0^s \int_0^t \left\langle u(r,v) \widetilde{\mathfrak{y}}(v) + \widetilde\phi(r,v) \otimes \widetilde{\mathfrak{y}}(v) + \dualLeftZ{{\phi}(r,v)}{\widetilde{\mathfrak{y}}(v)}, \mathfrak{y}(r) \right\rangle\dd{v}\dd{r} \\
    &= \int_0^s \int_0^t \left\{ u(r,v) \langle \widetilde{\mathfrak{y}}(v), \mathfrak{y}(r)\rangle + \langle \widetilde\phi(r,v) \otimes \widetilde{\mathfrak{y}}(v), \mathfrak{y}(r)\rangle + \langle \dualLeftZ{{\phi}(r,v)}{\widetilde{\mathfrak{y}}(v)}, \mathfrak{y}(r)\rangle \right\}\dd{v}\dd{r}\\
    &= \int_0^s \int_0^t \Big\{ u(r,v) \langle \mathfrak{y}(r), \widetilde{\mathfrak{y}}(v)\rangle +  \langle \widetilde{\phi}(r,v), \dualRightZ{\widetilde{\mathfrak{y}}(v)}{\mathfrak{y}(r)}\rangle + \langle \phi(r,v), \dualRightZ{\mathfrak{y}(r)}{\widetilde{\mathfrak{y}}(v)}\rangle\Big\}\dd{v}\dd{r} \\
    &= \bigg. u(s,t) - u(s,0) - u(0,t) + u(0,0),
\end{align*}
where in the third equality we repeatedly use that $\langle \dualLeft{a}{b}, c\rangle = \langle \dualLeftZ{a}{b}, c\rangle$ and $\langle \dualRight{a}{b}, c\rangle = \langle \dualRightZ{a}{b}, c\rangle$ for $a, b, c\in \TT^1$ with $\pi_0 c = 0$.
Next, in the equation for $\phi$ we split $$\int_0^s\dualLeftZ{\widetilde{\phi}(r,t)}{\mathfrak{y}(r)} \dd{r} = \int_0^s\dualLeft{\widetilde{\phi}(r,t)}{\mathfrak{y}(r)}\dd{r} - \int_0^s\langle \widetilde{\phi}(r,t), \mathfrak{y}(r)\rangle\dd{r},$$
and insert the above derived identity to finally obtain
\begin{align*}
    [\phi + u](s,t) =&~u(s,0) + u(0,t) - u(0,0) + \int_0^s \left\{[\phi+u](r,t) \otimes \mathfrak{y}(r) + \dualLeft{\widetilde{\phi}(r,t)}{\mathfrak{y}(r)} \right\}\dd{r},
\end{align*}
for all $s,t \in[0,T]$.
Now let $s_0,t_0 \in[0,T]$ be arbitrary but fixed and define
$$U:=\sup_{(r,v)\in[0,s_0]\times[0,t_0]} \big\vert u(r,0) +  u(0,v) - u(0,0)\big\vert.$$
By Proposition~\ref{prop:adjoint_mul} we have $\Vert \dualLeft{a}{b} \Vert_1 \le \Vert a \Vert_1 \Vert b\Vert_1$ and so
\begin{align*}
    \Vert \phi(s,t) + u(s,t) \Vert_1 \le&~ U +  \int_0^s \left(\Vert \phi(r,t) + u(r,t) \Vert_1 + \Vert \widetilde{\phi}(r,t) \Vert_1\right)\Vert\mathfrak{y}(r)\Vert_1\dd{r}.
\end{align*}
An application of Gr\"onwall's inequality, more precisely Lemma~\ref{lem:special_gronwal}, then yields
\begin{align*}
      \Vert {\phi}(s,t) + u(s,t) \Vert_1 
       =&~  e^{C_{s}}U +  e^{C_{s}}\int_0^s \Vert \widetilde{\phi}(r,t) \Vert_1  \Vert \mathfrak{y}(r)\Vert_1 e^{-C_r}\dd{r},
\end{align*} 
for all $s\in[0,s_0]$ and $t \in [0,t_0]$.
In the equation for $\widetilde{\phi}$ we directly estimate again using Gr\"onwall's inequality as follows 
\begin{align*}
    \Vert \widetilde\phi(s,t)\Vert_1 \le&~ \int_0^t \left(  \Vert \phi(s,v) + u(s,v)\Vert_1  + \Vert \widetilde{\phi}(s,v)\Vert_1 \right)\Vert \widetilde{\mathfrak{y}}(v)\Vert_1\dd{v} \\
    \le&~ e^{\widetilde{C}_{t}}\int_0^t \Vert {\phi}(s,v) + u(s,v) \Vert_1  \Vert \widetilde{\mathfrak{y}}(v)\Vert_1 e^{-\widetilde{C}_v}\dd{v},
\end{align*}
Plugging this estimate into the previous one we get
\begin{multline*}
    \Vert {\phi}(s,t) + u(s,t) \Vert_1 
       \le  e^{C_{s}+\widetilde{C}_{t}}\left( U + \int_0^s  \int_0^t \Vert {\phi}(r,v) + u(r,v) \Vert_1 e^{-\widetilde{C}_v-C_r} \Vert \widetilde{\mathfrak{y}}(v)\Vert_1  \Vert \mathfrak{y}(r)\Vert_1\dd{v}  \dd{r} \right).
\end{multline*} 
Hence, by \cite{snow1969two} (or more specifically \cite[Corollary~2]{snow1972gronwall}) we have $$\Vert {\phi}(s,t) + u(s,t)\Vert_1 \le Ue^{C_s + \widetilde{C}_t}\varphi(s,t), \qquad (s,t) \in [0,s_0]\times[0,t_0],$$ where $\varphi$ uniquely solves
\begin{align*}
         {\varphi}(s,t)  =&~   1 +   \int_0^s \int_0^t  {\varphi}(r,v)  \Vert \widetilde{\mathfrak{y}}(v)\Vert_1 \Vert \mathfrak{y}(r)\Vert_1   \dd{v} \dd{r},
\end{align*} 
which by the lemma below has the explicit solution ${\varphi}(s,t)  = I_0(2(C_s\widetilde{C}_t)^{\frac{1}{2}})$.
The corresponding estimate for $\Vert u + \widetilde{\phi}\Vert_1$ follows completely analogously by switching the roles of $\phi$ and $\widetilde{\phi}$.
\end{proof}
For completeness, we present the derivation of a well-known explicit form of the Riemann function corresponding to a Goursat problem with a multiplicative structure.
Note that we are intentionally avoiding intricacies associated with the complex square root.
\begin{lemma}
Let $f,g: [0,T] \to \R$ be integrable, such that
\begin{align*}
     F(t) := \int_0^t f(u)\dd{u} ~\ge0, \qquad G(t) := \int_0^t g(u)\dd{u}~\ge 0,
\end{align*}
for all $t\in[0,T]$.
Then the unique solution of the integral equation
\begin{align*}
    \varphi(s,t) = 1 + \int_0^s \int_0^t f(r) g(v) \varphi(r,v) \dd{r} \dd{v}, \quad s,t\in [0,T],
\end{align*}
is given by
\begin{align}\label{eq:goursat_solution}
    \varphi(s,t) =  I_0\left(2\sqrt{F(s)G(t)}\right), \qquad s,t\ge0.
\end{align}

\end{lemma}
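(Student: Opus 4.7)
The plan is to construct the solution as an explicit power series via Picard iteration and then recognize the sum as the series of a modified Bessel function. Define $\varphi_0 \equiv 1$ and inductively
\[
    \varphi_{n+1}(s,t) := 1 + \int_0^s\int_0^t f(r)g(v)\varphi_n(r,v)\,dv\,dr,
\]
with differences $\psi_n := \varphi_n - \varphi_{n-1}$ for $n\ge 1$ and $\psi_0 := 1$. The key computation is to prove by induction that
\[
    \psi_n(s,t) = \frac{F(s)^n G(t)^n}{(n!)^2}, \qquad n\ge 0.
\]
The base case is immediate. For the inductive step, since $\psi_n$ is separable, Fubini factorizes the double integral, and the identity $\int_0^s n\,f(r)F(r)^{n-1}\,dr = F(s)^n$, which is the fundamental theorem of calculus applied to $r\mapsto F(r)^n$ together with $F(0)=0$, reduces the calculation to an algebraic rearrangement. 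Crucially this identity does not require monotonicity of $F$, so no sign assumption on $f$ is needed (and analogously for $g$).

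Summing the telescoping series yields the candidate
\[
    \varphi(s,t) = \sum_{n=0}^{\infty}\psi_n(s,t) = \sum_{n=0}^{\infty}\frac{(F(s)G(t))^n}{(n!)^2}.
\]
Since $F(s)G(t)\ge 0$ by hypothesis, the quantity $z := 2\sqrt{F(s)G(t)}$ is real and non-negative, and the standard series representation $I_0(z) = \sum_{n\ge 0}(z/2)^{2n}/(n!)^2$ identifies the sum with $I_0\!\left(2\sqrt{F(s)G(t)}\right)$, producing the announced formula \eqref{eq:goursat_solution}.

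For convergence of the Picard scheme and uniqueness, I would introduce the non-negative majorants $\widetilde F(t):=\int_0^t|f(u)|\,du$ and $\widetilde G(t):=\int_0^t|g(u)|\,du$. Redoing the inductive computation with absolute values gives $|\psi_n(s,t)|\le \widetilde F(s)^n\widetilde G(t)^n/(n!)^2$, which is summable uniformly on $[0,T]^2$, so $\varphi_n$ converges uniformly to the candidate above, and the limit satisfies the integral equation by dominated convergence. Uniqueness follows from the same bound applied to the difference $w$ of two continuous solutions: $w$ solves the homogeneous equation, and iterating gives $|w(s,t)|\le \|w\|_\infty\widetilde F(s)^n\widetilde G(t)^n/(n!)^2$ for every $n$, forcing $w\equiv 0$. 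There is no substantive obstacle here; once the separable ansatz is guessed, the argument reduces to a one-line Fubini calculation combined with the textbook Bessel series.
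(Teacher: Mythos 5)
Your argument is correct, and it proceeds along a genuinely different route from the paper's proof. The paper first assumes $f,g$ are continuously differentiable with $F,G>0$, verifies the candidate pointwise via the modified Bessel ODE $xI_0''(x)+I_0'(x)-xI_0(x)=0$, then removes the regularity and strict positivity assumptions by an $\varepsilon$-shift and dominated convergence, and finally cites Lees for uniqueness of the Goursat problem. You instead run the Picard iteration from scratch: the separable structure of the kernel $f(r)g(v)$ makes the $n$-th increment factor exactly as $\psi_n = F^nG^n/(n!)^2$, and the sum is recognized term-by-term as the Bessel series $I_0(2\sqrt{x})=\sum_{n\ge 0} x^n/(n!)^2$; uniqueness falls out of the same factorial estimate applied to the homogeneous equation. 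This buys a few things: no differentiability of $f,g$ is needed at any stage (you only use that $r\mapsto F(r)^n$ is absolutely continuous with $(F^n)'=nF^{n-1}f$ a.e., which holds by the Lipschitz chain rule since $F$ is absolutely continuous and bounded, irrespective of the sign of $f$); no smoothing or $\varepsilon$-regularization step; and the proof is self-contained, dispensing with the external citation to Lees. The tradeoff is that your route relies on guessing the separable ansatz and summing a specific power series, whereas the paper's verification is closer to the standard Riemann-function derivation in the Goursat literature; for someone who already knows the Bessel ODE identity, the paper's check is arguably shorter. One small point worth making explicit in the write-up: the hypothesis $F,G\ge 0$ is exactly what makes $F(s)G(t)\ge 0$, so that $\sum_n (F(s)G(t))^n/(n!)^2$ is the $I_0$ series rather than the ordinary Bessel $J_0$ series; without the sign assumption the power-series representation of $\varphi$ would still hold but the identification with $I_0(2\sqrt{\cdot})$ would not.
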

\begin{proof}
    Uniqueness follows from a classical result on the well-posedness of the Goursat problem in \cite{lees1960goursat}.
    
    To prove that \eqref{eq:goursat_solution} is the solution, first assume that $f$ and $g$ are continuously differentiable and that %
    $F, G > 0$ on $[0,T]$. 
    In this case $\varphi$ is twice continuously differentiable on $[0,T]^2$ and differentiation yields 
    \begin{align*}
        \frac{\partial^2}{\partial s\partial t} \varphi(s,t) =&~ \frac{\partial}{\partial s}\left(  I_0^{\prime}\left(2\sqrt{F(s)G(t)}\right) \frac{F(s)g(t)}{\sqrt{ F(s)G(t)}}\right)\\
        =&~ I_0^{\prime\prime}\left(2\sqrt{F(s)G(t)}\right) f(s)g(t) + I_0^{\prime}\left(2\sqrt{F(s)G(t)}\right) \frac{f(s)g(t)}{\sqrt{F(s)G(t)}}\\
        =&~ \varphi(s,t) f(s)g(t),
    \end{align*}
    for all $s,t\in[0,T]$, where the last equality follows from the fact that $I_0$ satisfies the modified Bessel equation 
    $x I^{\prime\prime}_0(x) + I^{\prime}_0(x) - x I_0(x) = 0$ on $x\in (0,\infty)$ see \cite[Section~9.6]{abramowitz1972handbook}.
    By definition it holds $\varphi(s, 0) = \varphi(0,t) = 1$ for all $s,t\in[0,T]$ from which we conclude that $\varphi$ solves the integral equation.
    
    Now if only $F, G \ge 0$, one verifies by checking the boundary data that for any $\varepsilon > 0$ the function $\varphi_\varepsilon(s,t) =  I_0\left(2\sqrt{(F(s)+\varepsilon)(G(t)+\varepsilon)}\right)$ solves the integral equation
    $$\varphi_\varepsilon(s,t) = \varphi_\varepsilon(s,0)+\varphi_\varepsilon(0,t)-\varphi_\varepsilon(0,0)
+\int_0^s\!\!\int_0^t f(r)g(v)\,\varphi_\varepsilon(r,v)\dd{r} \dd{v}.
         $$
    The final statement then follows by using dominated convergence to both let $\varepsilon\to0$ and extend from continuously differentiable to integrable functions $f$ and $g$.
\end{proof}

\section{Gr\"onwall's inequality for a special case}
As it is used frequently above, we present the following special case of Gr\"onwall's inequality in a form convenient for later reference.
\begin{lemma}\label{lem:special_gronwal}
    Let $C\in \R$, let $\beta: [0,\infty) \to \R_+$ and $\phi:[0,\infty) \to \R$ be (locally) integrable, and let $\varphi:[0,\infty) \to \R$ be continuous such that
    \begin{align*}
    \varphi(t) \le&~ C + \int_0^t \left(\varphi(u) + \phi(u) \right)\beta(u)\dd{u}, \qquad t\ge0.
    \end{align*}
    Then it holds
    \begin{align*}
        \varphi(t) \le&~ Ce^{\int_0^t \beta(r)\dd{r}} + \int_0^t \phi(u) e^{\int_u^t \beta(r)\dd{r}}\beta(u)\dd{u}, \qquad t\ge0.
    \end{align*}
\end{lemma}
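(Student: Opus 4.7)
The plan is to reduce this to the standard integrating-factor proof of Gr\"onwall. First I would introduce the auxiliary function
$$h(t) := \int_0^t \bigl(\varphi(u) + \phi(u)\bigr)\beta(u)\,\d{u},$$
which, under the standing integrability hypothesis, is absolutely continuous with $h(0) = 0$ and $h'(t) = (\varphi(t) + \phi(t))\beta(t)$ for almost every $t\ge0$. The assumption then reads $\varphi(t) \le C + h(t)$, which, inserted into the expression for $h'$, yields the linear differential inequality
$$h'(t) - \beta(t)\,h(t) \;\le\; \bigl(C + \phi(t)\bigr)\beta(t)\qquad\text{a.e.}$$

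The next step is to multiply through by the integrating factor $E(t) := e^{-\int_0^t \beta(r)\,\d{r}}$, which turns the left-hand side into the derivative $\frac{\d}{\d t}\bigl(E(t)\,h(t)\bigr)$. Integrating from $0$ to $t$ and using $h(0) = 0$ gives
$$h(t) \;\le\; \int_0^t \bigl(C + \phi(u)\bigr)\beta(u)\, e^{\int_u^t \beta(r)\,\d{r}}\,\d{u}.$$
The constant-$C$ part of this integral telescopes: recognizing
$$\beta(u)\,e^{\int_u^t \beta(r)\,\d{r}} \;=\; -\frac{\d}{\d u}\,e^{\int_u^t \beta(r)\,\d{r}},$$
one evaluates it to $C\bigl(e^{\int_0^t \beta(r)\,\d{r}} - 1\bigr)$. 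Combining this with $\varphi(t) \le C + h(t)$ produces exactly the claimed inequality.

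No genuine obstacle arises: $h$ is absolutely continuous as an indefinite Lebesgue integral, hence so is the product $E\cdot h$, so the fundamental theorem of calculus legitimises passing from the a.e.\ derivative inequality to the integrated form. The continuity of $\varphi$ is only used to ensure that $\varphi\beta$ is locally integrable on $[0,\infty)$, and $\beta\ge0$ ensures that the integrating-factor inequality preserves the direction of the bound. The whole argument is the textbook proof of Gr\"onwall, carrying the inhomogeneous source term $\phi\beta$ through unchanged.
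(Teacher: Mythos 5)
Your proof is correct, but it takes a genuinely different route from the paper's. The paper instead defines $\rho(t) := Ce^{\int_0^t\beta(r)\,\d r} + \int_0^t \phi(u)e^{\int_u^t\beta(r)\,\d r}\beta(u)\,\d u$, verifies by integration by parts that $\rho$ satisfies the integral relation with equality, observes by linearity that $\varphi - \rho$ satisfies the homogeneous inequality $(\varphi-\rho)(t)\le\int_0^t(\varphi-\rho)(u)\beta(u)\,\d u$, and then cites a standard (homogeneous, zero-constant) Gr\"onwall inequality to conclude $\varphi\le\rho$. In contrast, you derive the bound constructively via the integrating factor $E(t)=e^{-\int_0^t\beta}$, tracking the inhomogeneous term throughout; this is self-contained and does not require invoking an external Gr\"onwall statement, but does require the (correctly justified) observation that $E\cdot h$ is absolutely continuous so that the pointwise a.e. inequality for $(Eh)'$ can be integrated. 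The paper's comparison argument is shorter once the candidate $\rho$ is known, outsourcing the integration step to a cited result; your argument explains where $\rho$ comes from. One small remark: the continuity of $\varphi$ is used not only to make $\varphi\beta$ locally integrable but also to ensure the hypothesis and conclusion hold pointwise for every $t$, which you use implicitly in the final step $\varphi(t)\le C+h(t)$.
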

\begin{proof}
    Using integration by parts, one verifies that
    \begin{align*}
        \rho(t) = Ce^{\int_0^t \beta(r)\dd{r}} + \int_0^t \phi(u) e^{\int_u^t \beta(r)\dd{r}}\beta(u)\dd{u}, \qquad t\ge0,
    \end{align*}
    satisfies the integral inequality with equality. Hence by linearity
    \begin{align*}
        \varphi(t) - \rho(t) \le&~\int_0^t \left(\varphi(u) - \rho(u) \right)\beta(u)\dd{u}, \qquad t\ge0.
    \end{align*}
    Hence, $\varphi - \rho \le 0$ (as implied from a standard version of Gr\"onwall's inequality, see e.g. \cite[Proposition 6.1.4]{applebaum2009levy}) and the claim follows.
\end{proof}
\end{document}